\def\x{{\bf x}}
\def\s{{\bf s}}
\def\bu{{\bf u}}
\def\bff{{\bf f}}
\def\bs{{\bf s}}
\def\bA{{\bf A}}
\def\bR{{\bf R}}
\def\bu{{\bf u}}
\def\bD{{\bf D}}
\def\dsp{\displaystyle}
\def\blue{\textcolor{blue}}
\def\red{\textcolor{red}}
\def\dx{{\rm d}\x}
\def\O{\Omega}
\def\G{\Gamma}
\def\R{\mathbb{R}}
\def\Nn{{\cal N}}
\def\Tt{{\cal T}}
\def\H10{H^1_{ \partial \O \setminus \partial \O_S}}
\def\hetse{heterogeneities}
\def\dsp{\displaystyle}
\def\nodes{{\cal V}}
\def\uharm{u_{\Delta}}
\def\uloc{u_b}
	\def\order{p}
	\def\sig{\partial \Omega_j \setminus \partial \Omega_S}
\def\VHp{V_{H,p}}
\def\VHpG{\VHp^\Gamma}
\def\smallop{\frac{1}{20}\mathcal{H}_j}
\newtheorem{proposition}{Proposition}[section]
\newtheorem{corollary}{Corollary}[section]
\newtheorem{lemma}{Lemma}[section]
\newtheorem{remark}{Remark}[section]
\newtheorem{theorem}{Theorem}[section]
\journal{Applied Numerical Mathematics}
\begin{document}

\begin{frontmatter}


\title{Robust Methods for Multiscale Coarse Approximations of Diffusion Models in Perforated Domains}


\author[uca]{Miranda Boutilier\corref{cor1}}
\ead{miranda.boutilier@univ-cotedazur.fr}
\author[uca,inria]{Konstantin Brenner}
\ead{konstantin.brenner@univ-cotedazur.fr}
\author[uca,strath]{Victorita Dolean}
\ead{work@victoritadolean.com}

 \cortext[cor1]{Corresponding Author}

\affiliation[uca]{organization={Université Côte d'Azur,  Laboratoire J.A. Dieudonné, CNRS UMR 7351},
            city={Nice},
            postcode={06108}, 
            country={France}}

\affiliation[inria]{organization={Team COFFEE, INRIA Centre at Université Côte d'Azur},
             city={Nice},
            postcode={06108},
            country={France}}

 \affiliation[strath]{organization={University of Strathclyde, Department of Mathematics and Statistics},
            city={Glasgow},
            postcode={G1 1XH},
  country={U.K.}}

\begin{abstract}
 For the Poisson equation posed in a domain containing a large number of polygonal perforations, we
propose a low-dimensional coarse approximation space based on a coarse polygonal partitioning of the domain.
Similarly to other multiscale numerical methods, this coarse space is spanned by locally discrete harmonic basis functions.
Along the subdomain boundaries, the basis functions are piecewise polynomial.  The main contribution of this article is an error estimate 
regarding the $H^1$-projection 
over the coarse space; this error estimate depends only on the regularity of the solution over the edges of the coarse partitioning.
For a specific edge refinement procedure, the error analysis establishes superconvergence of the method even if the true solution has a low general regularity. \red{Additionally, this contribution numerically explores the combination of the coarse space with domain decomposition (DD) methods. This combination leads to an efficient two-level iterative linear solver which reaches the fine-scale finite element error in few iterations. }
It also bodes well as a preconditioner for Krylov methods and provides scalability with respect to the number of subdomains. 
 
\end{abstract}


\begin{keyword}

Domain decomposition \sep iterative methods \sep coarse approximation \sep multiscale methods \sep perforated domains



\end{keyword}

\end{frontmatter}

\section*{\textcolor{violet}{Nomenclature}}

\textcolor{violet}{
\begin{tabular}{p{2cm}|p{10cm}}
Parameter Notation & Parameter Definition \\
\hline 
$\left( D_j \right)_{j= 1,\ldots, N}$ & Nonoverlapping partitioning of polygonal domain $D$ \\
$\O_S$ & Union of all perforations \\
$\left( \O_j \right)_{j= 1,\ldots, N}$ & Nonoverlapping partitioning of $\O= D \setminus \overline{\O_S}$ \\ 
 $\left( \O_j'\right)_{j= 1,\ldots, N}$ & Overlapping partitioning such that $\O_j \subset \O_j'$. \\
$\Gamma$ & Coarse skeleton $\bigcup_{j = 1, \ldots, N } \partial \O_j\setminus \partial\O_S$ \\
$\left(e_k\right)_{k = 1, \ldots, N_e}$  & Nonoverlapping partitioning of $\G$ into coarse edges  \\
$\nodes$ & Set of coarse grid nodes $ \bigcup_{k = 1, \ldots, N_e } \partial e_k$ \\
$H$  &  Max length over all coarse edges, $\max_{k = 1,\ldots, N_e} |e_k|$. \\ 
$\mathcal{H}_j$ & 
Maximum (length or width) distance between minimal and maximal 
coordinates of $\O_j$  \\
$N_\O$ & Set of internal triangulation nodes \\
$\bA$ & $N_\O\times N_\O$ matrix derived from finite element triangulation of Poisson equation \\ 
$\bR_H$ & Discrete coarse matrix of harmonic basis functions \\ 
$\bR_j'$ &  Boolean restriction matrices for $\O_j'$  \\
$\bR_j'$ &  Boolean restriction matrices for $\O_j'$  \\
$\overline{\bR}_j$  &  Boolean restriction matrices for the closure  of $\O_j$, given by $\overline{\O}_j$
\end{tabular}}

	\section{Introduction and Model Problem}
	\label{sec:1}
	
	Numerical modeling of overland flows plays an increasingly important role in predicting, anticipating, and controlling floods. Anticipating these flood events can aid in the positioning of protective systems including dams, dikes, or rainwater drainage networks. One of the challenges of this numerical modeling of urban floods is that small structural features (buildings, walls, etc.) may significantly affect the water flow. In
particular, the representation of the structural features in the numerical model impacts both the
timing and extent of the flood \cite{YuLane1, YuLane2, Abily}. 
 
	Modern terrain survey techniques including photogrammetry and Laser Imaging, Detection, and Ranging (LIDAR) allow us to acquire high-resolution topographic data for urban areas.
	The data set used in this article has been provided by Métropole Nice Côte d'Azur (MNCA) and allows for
  an infra-metric description of the urban geometries \cite{buildings}. From the hydraulic perspective,  these structural features can be assumed to be essentially impervious,  and therefore represented as perforations (holes) in the model domain.


	In this article, we consider a linear diffusion model. Let $D$ be an open simply connected polygonal domain in $\mathbb{R}^2$. We denote by $\left(\O_{S,k}\right)_k$ a finite family of perforations in $D$ such that each $\O_{S,k}$ is an open connected polygonal subdomain of $D$. The perforations are mutually disjoint, that is $\overline{\O_{S,k}}\cap \overline{\O_{S,l}} = \emptyset$ for any $k\neq l$.  We denote $\O_S = \bigcup_k \O_{S,k}$ and $\O = D\setminus \overline{\O_S}$, assuming that the family $\left(\O_{S,k}\right)_k$ is such that $\O$ is connected.  Note that the latter assumption implies that $\O_{S,k}$ are simply connected.
	
	We are interested in the boundary value problem given by
	\begin{equation}\label{model_pde}
		\left\{
		\begin{array}{rll}
			- \Delta u &=& f \qquad \mbox{in} \qquad \O, \\
			\dsp  \frac{\partial u}{\partial \bf{n}} &=& 0 \qquad \mbox{on} \qquad \partial  \O  \cap \partial \O_S,\\
			u &=& 0 \qquad \mbox{on} \qquad  \partial \O \setminus \partial \O_S,\\
		\end{array}
		\right.
	\end{equation}
	where $\mathbf{n}$ is the outward normal to the boundary and $f\in L^2(\O)$.
 
	Let us denote by $(\cdot, \cdot)_{L^2(\O)}$ and $(\cdot, \cdot)_{H^1(\O)}$ the standard $L^2$ and $H^1$ scalar products, that is,
	$$
	(u,v)_{L^2(\O)} = \int_\O u v \, \dx \qquad \mbox{and} \qquad 
	(u, v)_{H^1(\O)} = \int_\O \nabla u\cdot \nabla v \, \dx.
	$$
	Setting 
	\begin{equation}\label{eq:h1partial}
		\H10(\O)= \{ u\in H^1(\O) \,  | \, u |_{\partial \O \setminus \partial \O_S} = 0\},
	\end{equation}
	the weak solution of \eqref{model_pde} satisfies the following variation formulation: Find $u\in  \H10(\O)$ such that
	\begin{equation}\label{model_weak}
		(u, v)_{H^1(\O)}  = (f, v)_{L^2(\O)} \qquad \mbox{for all} \qquad v\in \H10(\O).
	\end{equation}
	
	In the context of urban flood modeling, $u$  represents the flow potential (pressure head) and  $\left(\Omega_{S,k}\right)_k$ can be thought of as a family of impervious structures such as buildings,  walls, or other similar structures.  Although the linear model \eqref{model_pde} is overly simplified to be directly used for urban flood assessment, the more general nonlinear elliptic or parabolic models are common in free surface flow simulations.  Such models arise from Shallow Water equations either by neglecting the inertia terms \cite{diffwave} or within the context of semi-implicit Froude-robust time discretizations \cite{lowfroude}.

	
Depending on the geometrical complexity of the computational domain, the numerical resolution of \eqref{model_pde} may become increasingly challenging. 
The typical model domain (see for example Figure \ref{fig:framesols}) resulting from the realistic
description of the urban environment will contain numerous perforations that are represented on
different scales.  \red{The resolution of these structures can result in extremely small computational elements. Thus, we wish to employ numerical strategies that consider two levels of space discretization (see Figure \ref{fig:triang} for a visual of both levels).  Specifically, we employ a multiscale method to solve \eqref{model_pde} on these urban geometries, 
aiming to achieve computational efficiency compared to classical fine-scale solution methods. Like other multiscale methods, this also allows for parallel implementation to further improve efficiency.}

In this contribution, we investigate two numerical strategies that are capable of handling the multiscale features of the urban geometries. As mentioned, both strategies consider two levels of discretization in space.
 The first level of space discretization is 
based on a coarse polygonal partitioning of $\Omega$, while the second is associated with the fine-scale triangulation and is designed to resolve the small-scale details of the model domain.  The coarse partitioning is used to decompose the solution of the problem \eqref{model_pde} into the sum of the locally harmonic component and local subdomain contributions, where the subdomain contributions can be efficiently computed in parallel.  This splitting leads to a system that can be seen as a continuous version of the Schur complement problem. 
We then introduce a low-dimensional space, called here the Trefftz or discrete Trefftz space, that serves to approximate the locally harmonic component of the solution.  This coarse approximation space
is built upon basis functions that satisfy the local Laplace problems (either exactly or via a finite element approximation) and have polynomial traces along the boundaries of the coarse partitioning. Both continuous and discrete variants of the coarse space are discussed.



	\begin{figure}
		\centering
		\begin{subfigure}{0.49\textwidth}
			\centering
			\includegraphics[height=5cm, width=6.3cm]{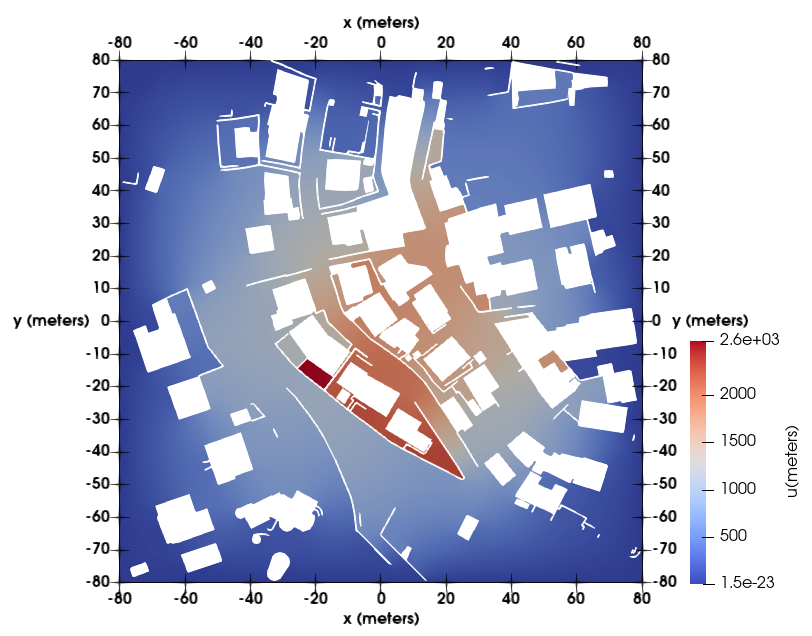}
		\end{subfigure}
		\begin{subfigure}{0.49\textwidth}
			\centering
			\includegraphics[height=5cm, width=6.1cm]{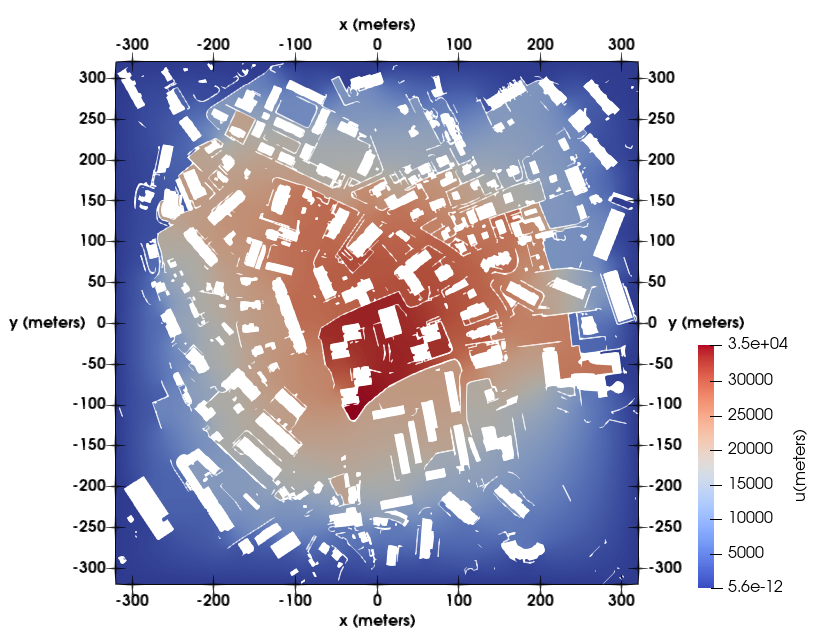}
		\end{subfigure}
		\caption{Finite element solution with $f = 1$ on model domains based on a smaller (left) and larger (right) data sets.}
		\label{fig:framesols}
	\end{figure}

	\begin{figure}
		\centering
		\begin{subfigure}{0.49\textwidth}
			\centering
			\includegraphics[height=5.cm, width=6.5cm]{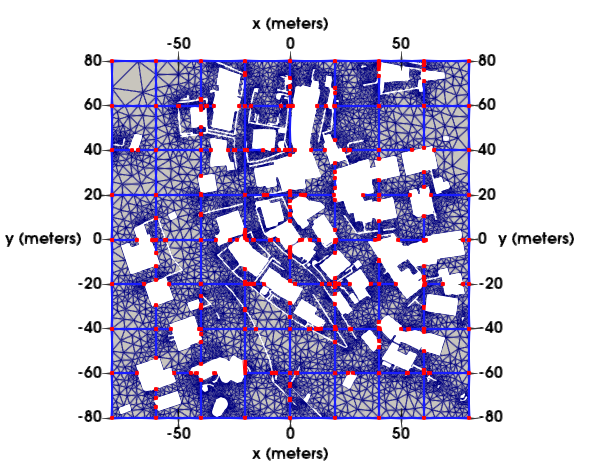}
		\end{subfigure}
		\begin{subfigure}{0.49\textwidth}
			\centering
			\includegraphics[height=5cm, width=6cm]{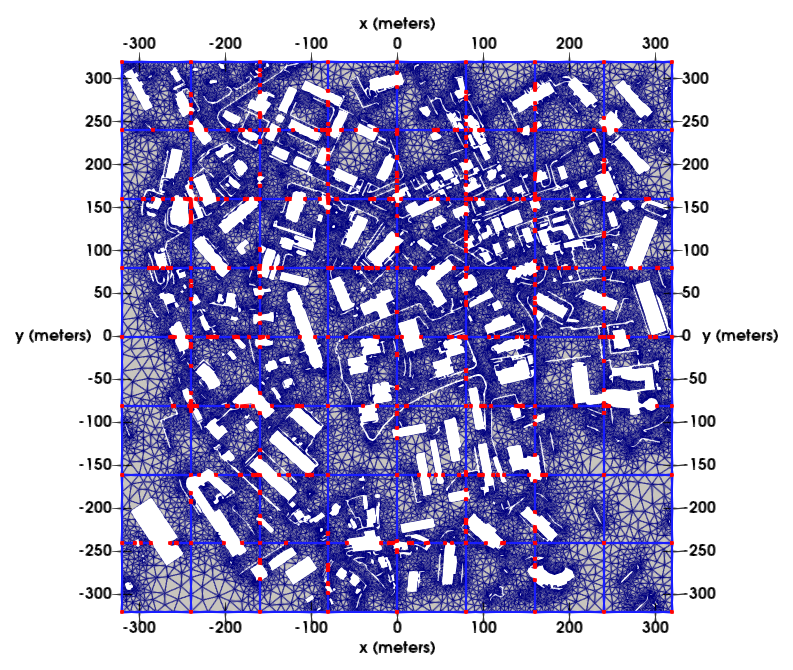}
		\end{subfigure}
		\caption{Coarse (thick lines) and fine (thin lines) discretizations for smaller (left) and larger (right) data sets, with the coarse nodes shown by red dots. }
		\label{fig:triang}
  \end{figure}

The first contribution of this article concerns the
ability of the Trefftz space to approximate the locally harmonic component of the solution (or the solution itself). 
We present an a priori estimate in the energy norm which, for a specific edge refinement procedure, 
results in superconvergence of the method regardless of the general regularity of the solution. 
This theoretical finding is confirmed by numerical experiments.


There exists extensive literature on problems involving highly oscillatory coefficients or multiscale geometrical features, including for example the Localized Orthogonal Decomposition (LOD) method \cite{lod} and Heterogeneous Multiscale methods (HMM) \cite{hmm}. Additionally, to solve these heterogeneous problems, there exist multiscale partition of unity methods such as
 the Generalized Finite Element Method (GFEM) \cite{gfem} and the Multiscale Spectral Generalized Finite Element Method (Ms-GFEM) \cite{msgfem, msgfem2}. 
In \cite{msgfem, msgfem2}, the optimal approximation spaces are constructed locally by solving generalized eigenvalue problems over a set of overlapping subdomains. 
The partition of unity functions are used to glue the local contributions into a globally conforming coarse approximation space.
 We note that these methods vary from the approach
presented here
 as we consider nonoverlapping coarse cells and do not rely on eigenproblems.
 
Since the perforations in our model problem are structured, we can exploit the structural features of the \hetse. The method we choose to consider here is closer to 
the Multiscale Hybrid-Mixed Method (MHM) \cite{mhm},
the Multiscale Finite Element Method (MsFEM) \cite{origmsfem}, or polytopal methods such as Virtual Element Methods (VEM) \cite{vem}.
In MsFEM, the domain is partitioned into coarse cells, allowing the method to take advantage of parallel computing. 
Basis functions are then computed numerically
based on the PDE on each coarse cell;
in this way, our methodology is similar to MsFEM. In comparison to the classical MsFEM, our method leads to a larger coarse space.
 Compared to VEM, the major difference is that we numerically compute the approximation of the locally harmonic basis.  By doing so, we manage to incorporate singular functions (corresponding to the corners of the domain) into the coarse space. We are also able to deal with very general polygonal cells (not star-shaped,  not simply connected,  etc.),  which differs from VEM.
Additionally, we willingly avoid using a method of fundamental solutions of any kind, because of our long-term motivation in problems more complex than the linear model problem \eqref{model_pde}.
The MHM method combines ideas from the Mixed Finite Element Method (MFEM) and MsFEM. 
Compared to MHM, where Neumann traces are approximated by piecewise polynomials, our method is formulated in terms of approximate Dirichlet traces. \blue{We remark as well the existence of Generalized Multiscale Finite Element Methods (GMsFEM) \cite{gmsfem}, which were created to generalize MsFEM methods by constructing coarse trial spaces via a spectral decomposition of snapshot spaces}.

The second contribution of the article is the combination of 
the proposed coarse approximation with local subdomain solves in a two-level domain decomposition (DD) method.  The idea of DD methods is to partition the domain into multiple subdomains and solve on each subdomain locally, gluing the local solutions together at the end to obtain a global solution. This results in an efficient iterative  solver for the algebraic system resulting from the fine-scale finite element method.  The obtained algorithm can also be thought as a way of improving the precision of the original coarse approximation in the spirit of iterative multiscale methods (see e.g.  \cite{imsfvm}).  
	We note that furthermore,  the discrete Trefftz coarse space can also be combined with domain decomposition methods to provide a two-level  preconditioner for Krylov methods. The coarse component provides robustness of
the preconditioned Krylov method in terms of iteration counts with respect to the number of subdomains. It also provides an additional acceleration when compared to the iterative method.
	
	
	Specifically on perforated domains with small and numerous perforations, the authors of \cite{legoll} introduced an MsFEM method for diffusion problems with
 Dirichlet boundary conditions imposed on the perforation and domain boundaries, with an  error estimate provided. In \cite{msfembubble, legoll2}, this method was extended to advection-diffusion problems, with \cite{legoll2} imposing both Dirichlet and Neumann boundary conditions on the perforation boundaries.
 In \cite{legoll, legoll2, msfembubble}, Crouzeix-Raviart type boundary conditions were imposed on the local problems to provide 
robustness with respect to the position of the perforations. As well, the addition of bubble functions is included.
 The authors of \cite{taralova} also provides an MsFEM method for perforated domains, providing numerical results and analysis for problems posed on domains with numerous small, regular perforations. In \cite{taralova}, Neumann boundary conditions are posed on the perforation boundaries, with the assumption
that the coarse grid does not intersect with the perforations along the edges for analysis purposes (although this assumption is not necessary numerically).
 \blue{Aside from classical MsFEM methods, the authors of \cite{hmmperf} introduced a HMM method to solve elliptic homogenization problems in perforated
domains, with periodic perforations required for analysis purposes. Additionally, the authors of \cite{gmsfemperf, gmsfemperf2, gmsfem3} proposed a GMsFEM method for perforated domains for small, regular perforations. }
	
While literature on DD approaches to solve model problems in perforated domains is limited, the model problem can be thought of as the extreme limit case of the elliptic model containing highly contrasting coefficients with zero conductivity on the perforations. 
	Two-level domain decomposition methods have been extensively studied for such heterogeneous problems.  
	There are many classical results for coarse spaces that are constructed to resolve the jumps of the coefficients; see \cite{analysis, multileveldiscon, balancing}   for further details.
	Robust coarse spaces have constructed using the ideas from
	MsFEM in \cite{multiscalehighaspectratio, multiscalepdes}. 
	Approaches to obtain a robust coarse space without careful partitioning of the subdomains include spectral coarse spaces such as those given in \cite{eigenproborig,dtn, 2012spillane,geneo}.
	The authors of \cite{quasimonotone} discuss these varied coefficient problems in the case where the coarse grid is not properly aligned with the \hetse. The combination of spectral and MsFEM methods can be found in \cite{shemorig}, where the authors enrich the MsFEM coarse space with eigenfunctions along the edges. Additionally, the family of GDSW
	(Generalized Dryja, Smith, Widlund) 
	methods \cite{gdsw}  employ energy-minimizing coarse spaces and can be used to solve heterogeneous problems on less regular domains. These spaces are discrete in nature and involve both edge and nodal basis functions.
	To deal with coefficient jumps in highly heterogeneous problems, an adaptive GDSW coarse space was introduced in \cite{adaptivegdsw}.  
	
 The present article expands upon two published conference papers by the same authors \cite{dd27, fvca}. 
Compared to the previous works, we consider a higher order Trefftz space and provide a detailed proof for the error estimate.
In addition, we provide extended numerical experiments regarding the convergence of a two-level domain decomposition method based on the  Trefftz coarse space. 
The article is laid out as follows. In Section \ref{sec:1lvl}, we introduce the continuous Trefftz coarse space and discuss its approximation properties. 
In Section  \ref{sec:2lvl}, we introduce the Trefftz coarse space in its discrete matrix form. With this, we provide the matrix forms of the coarse approximation and domain decomposition methods. The two-level domain decomposition methods are presented as both an iterative solver and a preconditioner for Krylov methods. In Section \ref{sec:num}, we provide numerical results for the Trefftz space used as a coarse approximation, in an iterative Schwarz method, and as a preconditioner for Krylov methods. We provide numerical results for two different types of model domains; the first domain is a simplified domain with one perforation at the corner, and the second type of model domain is a realistic urban domain with numerous perforations of various shape.  Section \ref{sec:conc} concludes with a summary and brief description of future work.
	
\section{Continuous Trefftz Approximation}\label{sec:1lvl}


In this section, we introduce the splitting of \eqref{model_weak}, inspired by the Schur method, and propose the continuous finite-dimensional coarse space that can be used to efficiently approximate the locally harmonic component of the solution.
We perform the error analysis of the coarse Galerkin approximation. In this regard, the main results are Proposition \ref{prop_iterpolation_error} and Theorem \ref{thm:thm}.
 
\subsection{Coarse Mesh and Space Decomposition}

We begin with a coarse discretization of $\O$ which involves a family of polygonal cells $\left( \O_j \right)_{j= 1,\ldots, N}$, the so-called coarse skeleton $\Gamma$, and the set of coarse grid nodes that will be referred to by $\nodes$. 

The construction is as follows.  Consider a finite nonoverlapping polygonal partitioning of $D$ denoted by $\left( D_j \right)_{j= 1,\ldots, N}$ and an induced nonoverlapping partitioning of $\O$ denoted by $\left( \O_j \right)_{j= 1,\ldots, N}$  such that $\O_j=D_j \cap \O$. We will refer to  $\left( \O_j \right)_{j= 1,\ldots, N}$  as the coarse mesh over $\O$.
	Additionally,  we denote by $\Gamma$ its skeleton,  that is  $\Gamma=  \bigcup_{j = 1, \ldots, N } \partial \O_j\setminus \partial\O_S$.

We define $H^1_\Delta(\Omega)$ as a subspace of $H^1(\O)$ composed of piece-wise harmonic functions, weakly satisfying the homogeneous Neumann boundary conditions on $\partial \O \cap \partial \O_S$ such that
 \begin{equation}  
H_\Delta^1(\O)= \{\, u \in H^1(\O)  \,\, | \,\, (u|_{\O_j}, v)_{H^1(\O_j)} =0  \quad \text{for all} \quad v \in H^1_{\partial \O_j\setminus \partial \O_S}(\O_j)\}.
\end{equation} 
 In other words, $H_\Delta^1(\O)$ can be defined as $u \in H^1(\O)$ such that for all subdomains $\O_j$, the equations
\begin{equation}
		\left\{
		\begin{array}{rll}
			- \Delta u|_{\O_j} &=& 0 \qquad \mbox{in} \qquad \O_j, \\
			\dsp  \frac{\partial u}{\partial \bf{n}} &=& 0 \qquad \mbox{on} \qquad \partial  \O_j  \cap \partial \O_S,\\
		\end{array}
		\right.
	\end{equation}
	are satisfied in a  weak sense.  We further define the space $H^1_\G(\Omega)$ as the subspace of functions vanishing on the coarse skeleton $\G$ such that 
 \begin{equation}  
H_\G^1(\O)= \{\, u \in H^1(\O)  \,\, | \,\, u|_{\Gamma} =0 \}.
\end{equation} 
By definition, $H^1_\Delta(\O)$ is orthogonal to $H^1_\G(\O)$. Since $H^1_\G(\O)$ is a closed subspace of $H^1(\O)$, we deduce that $H^1(\O) = H^1_\Delta(\Omega) \oplus H^1_\Gamma(\Omega)$ (see e.g. \cite{func}). In other words, a given function $v\in H^1(\O)$ admits a unique decomposition into $v_\Delta + v_b$,  where $v_\Delta \in H_\Delta^1(\O)$, $v_b \in H^1_\G(\O)$ and $(v_\Delta, v_b)_{H^1(\O)} = 0$. Although for  simplicity we will call $v_\Delta$ the ``locally harmonic'' or ``piece-wise'' harmonic component of $v$,  we wish to stress that the space $H^1_\Delta(\O)$ also contains information about the normal traces of $v_\Delta$ over $\partial \O_S$.  The function $v_b$ will be referred to as the local or ``bubble'' component of $v$.

	Using the orthogonal decomposition of $H^1(\O)$ introduced above, we can express \eqref{model_weak} as 
	the following Schur complement problem: Find $u = \uharm + \uloc$ with $\uharm \in H^1_\Delta(\O) \cap H^1_{\partial \O \setminus \partial \O_S}(\Omega)$ and $u_b \in H^1_{\Gamma}(\Omega)$ satisfying
	\begin{align}[left=\empheqlbrace]
			( \uharm, v)_{H^1(\O)} &= (f, v)_{L^2(\O)}  \qquad \forall  v \in H^1_\Delta(\Omega) \cap H^1_{\partial \O \setminus \partial \O_S}(\Omega), \label{weak_HP_1}
			\\
			( \uloc, v)_{H^1(\O)} &=  (f, v)_{L^2(\O)} \qquad  \forall  v\in H^1_{\Gamma}(\Omega).\label{weak_HP_2}
\end{align}

	We remark that the formulation \eqref{weak_HP_1}-\eqref{weak_HP_2} uncouples the local and the piece-wise harmonic components of $u$,  moreover that the ``bubble'' component of the solution $\uloc$ can be computed from  \eqref{weak_HP_2}
	locally (and in parallel) on each $\Omega_j$,  while the problem \eqref{weak_HP_1} remains
	globally coupled over the computational domain $\Omega$.

	\subsection{Continuous Trefftz Space}\label{sec:contandapprox}
	
	We now proceed with the goal to approximate the locally harmonic component $\uharm$ of the solution. For this, we introduce the Trefftz coarse space, a finite-dimensional subspace of $H^1_\Delta(\Omega)$ that is spanned by functions that are piece-wise polynomial on the skeleton $\G$.

 Let $\left(e_k\right)_{k = 1, \ldots, N_e}$ denote a nonoverlapping partitioning of $\G$ such that each ``coarse edge''  $e_k$ is an open planar segment, 
and we denote $H = \max_{k = 1,\ldots, N_e} |e_k|$.
The set of coarse grid nodes is given by 
$\nodes= \bigcup_{k = 1, \ldots, N_e } \partial e_k$.
Figure \ref{fig:coarsedofs} illustrates the location of the nodal degrees of freedom that typically result from clipping $(D_j)_j$ with $\O_S$.  
It is important to note that a straight segment of $\Gamma$ may be subdivided into multiple edges. 
As we will show in Section \ref{sec:num}, this subdivision can be intentional (see Figure \ref{fig:edgeref}) to achieve convergence of the coarse approximation. 

We define  
$$\VHpG= \{\,v \in C^0(\overline{\Gamma}) \,\, | \,\, v|_{e_k} \in \mathbb{P}_\order(e_k)  \quad \text{for all} \quad k = 1, \ldots, N_e \},$$
where $\mathbb{P}_\order(e_k)$ denotes the set of polynomials of order (at most $\order$) over an edge $e$. 
 We also define 
 $$\VHp= \{\,v \in H_\Delta^1(\O) \,\, | \,\, v|_\Gamma \in \VHpG \}.$$

	
	
	\begin{figure}
		\centering
		\includegraphics[height=4.5cm, width=6.5cm]{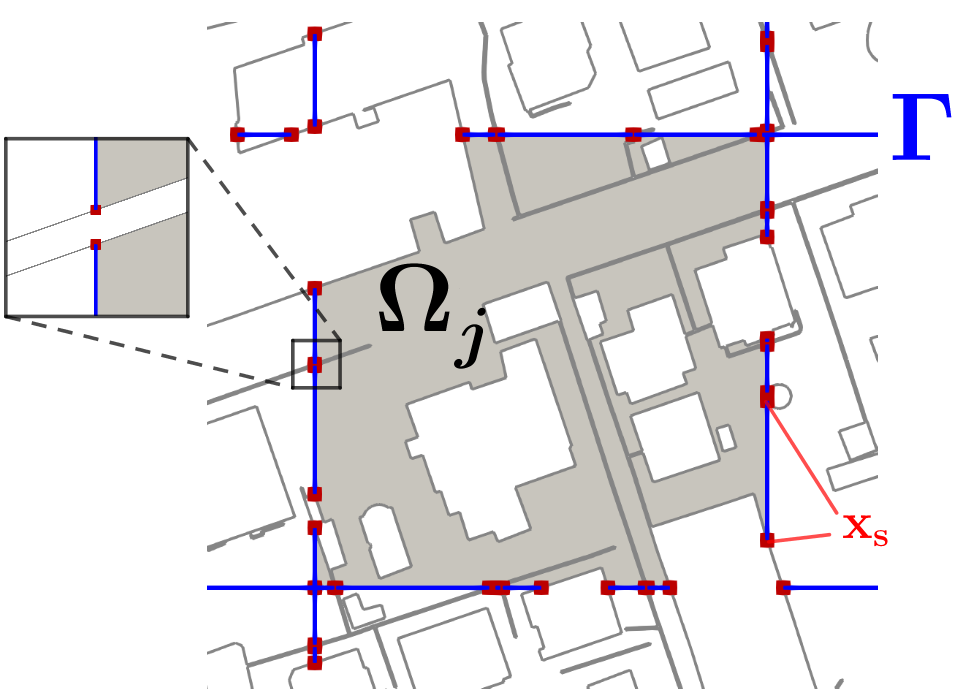}
		\newline
		\caption{ Coarse grid cell $\O_j$, nonoverlapping skeleton $\Gamma$ (blue lines), and coarse grid nodes $\mathbf{x}_s=(x_s,y_s) \in \nodes$ (red dots). Coarse grid nodes are located at $\overline{\Gamma} \cap \partial \O_S.$}
		\label{fig:coarsedofs}
	\end{figure}	

Let $(g_\alpha)_{ \alpha=1, \ldots, N_{H,p}   }$ be some basis of $\VHpG$, where $N_{H,p}$ is the dimension of $\VHpG$.   In practice,  $(g_\alpha)_{ \alpha=1, \ldots, N_{H,p}  }$ is set up by combining the set of the nodal piece-wise linear ``hat'' functions with the set of the higher order edge-based basis functions.  

The function $\phi_\alpha \in \VHp$ associated to $g_\alpha$ is computed by weakly imposing 
	\begin{equation}\label{eq:laplacedir}
		\left\{
		\begin{array}{rllll}
			\Delta \phi_\alpha &=& 0  &  \mbox{in} &  \O_j, \\
			\dsp 	\frac{\partial \phi_\alpha}{\partial \bf{n}}&=&0 &\mbox{on} & \partial \O_j \cap \partial \O_S,\\
			\phi_\alpha &=& g_\alpha &\mbox{on} &  \partial \O_j \setminus \partial \O_S,\\
		\end{array}
		\right.
	\end{equation}
 \blue{against the tests functions in $V_{h,p}.$}



	\blue{The Galerkin method to approximate $u_\Delta \in H_\Delta^1(\O)$} based on the coarse space reads as follows: find $u_{\Delta,H} \in V_{H,p} \cap H^1_{\partial \O \setminus \partial \O_S}(\Omega)$ such that
	\begin{equation}\label{eq:femvariational}
		(u_{\Delta,H}, v)_{H^1(\O)} = (f, v)_{L^2(\O)} \qquad \forall v \in V_{H,p} \cap H^1_{\partial \O \setminus \partial \O_S}(\Omega).
	\end{equation}

	\subsection{Error Analysis}
	We now provide an error estimate for the convergence of the Trefftz approximation space. We begin with a few preliminary results that will be necessary in the proof.
	
	\begin{lemma}[Polynomial interpolation over an edge]\label{lem_interp_1d}
		Let $e_k$ be a coarse edge. We denote by ${\cal I}^\order_k$ the Lagrange interpolator of order $\order$ defined with respect to some set of interpolation points containing the endpoints of $e_k$.  
		Then,  there exists $c_0 = c_0(p)>0$ such that for every $v$ of sufficient regularity, we have
		$$
		\|v - {\cal I}_k^\order v\|_{L^2(e_k)} +  |e_k| | v - {\cal I}_k^\order v|_{H^1(e_k)} \leq c_0 |e_k|^{\order+1} | v |_{H^{\order+1}(e_k)}.
		$$
This directly implies the following $L^2$ and $H^1$ estimates:
	$$
	\| v - {\cal I}_k^p v \|_{L^2(e_k)} \leq c_0 H^{\order+1} | v |_{H^{\order+1}(e_k)}
	\qquad
	\mbox{and}
	\qquad
	\| v - {\cal I}_k^p v \|_{H^1(e_k)} \leq c_1 H^{\order} | v |_{H^{\order+1}(e_k)},
	$$
	where $c_1  = c_0 \left( 1 + H^2 \right)^{1/2}$ can be bounded e.g.  as $c_1  \leq c_0 \left( 1 + {\rm diam}(\Omega)^2 \right)^{1/2}$.

	\end{lemma}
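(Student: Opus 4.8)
The plan is a standard scaling argument combined with the Bramble--Hilbert (Deny--Lions) lemma on a reference interval. First I would fix the reference edge $\hat e = (0,1)$ and let $\hat{\cal I}^\order$ denote the Lagrange interpolator on $\hat e$ associated with the pulled-back interpolation points; since these points include the endpoints of $\hat e$, $\hat{\cal I}^\order$ reproduces every polynomial in $\mathbb{P}_\order(\hat e)$. Because we are in one space dimension, $H^{\order+1}(\hat e) \hookrightarrow C^0(\hat e)$, so the point evaluations defining $\hat{\cal I}^\order$ are continuous on $H^{\order+1}(\hat e)$ and the operator $I - \hat{\cal I}^\order : H^{\order+1}(\hat e) \to H^1(\hat e)$ is linear and bounded; this is precisely where the ``sufficient regularity'' hypothesis $v\in H^{\order+1}(e_k)$ is used. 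Since $I - \hat{\cal I}^\order$ annihilates $\mathbb{P}_\order(\hat e)$, the Bramble--Hilbert lemma furnishes a constant $\hat c = \hat c(\order)>0$ with
$$\|\hat v - \hat{\cal I}^\order \hat v\|_{L^2(\hat e)} + |\hat v - \hat{\cal I}^\order \hat v|_{H^1(\hat e)} \le \hat c \, |\hat v|_{H^{\order+1}(\hat e)} \qquad \text{for all } \hat v \in H^{\order+1}(\hat e).$$

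Next I would transport this estimate to $e_k$ through the affine bijection $F_k : \hat e \to e_k$, which has constant Jacobian $|e_k|$. Writing $\hat v = v\circ F_k$ and using the elementary scaling identities $\|w\circ F_k\|_{L^2(\hat e)}^2 = |e_k|^{-1}\|w\|_{L^2(e_k)}^2$ and $|w\circ F_k|_{H^m(\hat e)}^2 = |e_k|^{2m-1}|w|_{H^m(e_k)}^2$, together with the fact that Lagrange interpolation commutes with affine changes of variables (so that $\widehat{{\cal I}_k^\order v} = \hat{\cal I}^\order \hat v$), each term in the reference estimate rescales by an explicit power of $|e_k|$. A short bookkeeping step gives $\|v - {\cal I}_k^\order v\|_{L^2(e_k)} \le \hat c\,|e_k|^{\order+1}|v|_{H^{\order+1}(e_k)}$ and $|e_k|\,|v - {\cal I}_k^\order v|_{H^1(e_k)} \le \hat c\,|e_k|^{\order+1}|v|_{H^{\order+1}(e_k)}$; adding these two yields the first claimed inequality with $c_0 = 2\hat c$, which depends only on $\order$ (and on the fixed choice of reference interpolation nodes).

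Finally, the two displayed corollaries follow by elementary manipulations. Dropping the nonnegative term $|e_k|\,|v-{\cal I}_k^\order v|_{H^1(e_k)}$ on the left and using $|e_k|\le H$ gives the $L^2$ bound. For the $H^1$ bound, the main inequality yields both $\|v - {\cal I}_k^\order v\|_{L^2(e_k)} \le c_0 H^{\order+1}|v|_{H^{\order+1}(e_k)}$ and, after dividing the term $|e_k|\,|v-{\cal I}_k^\order v|_{H^1(e_k)}$ by $|e_k|$, $|v - {\cal I}_k^\order v|_{H^1(e_k)} \le c_0 H^{\order}|v|_{H^{\order+1}(e_k)}$; squaring, summing, factoring out $c_0^2 H^{2\order}$ and taking square roots produces $\|v - {\cal I}_k^\order v\|_{H^1(e_k)} \le c_0 (1+H^2)^{1/2} H^\order |v|_{H^{\order+1}(e_k)}$, i.e. $c_1 = c_0(1+H^2)^{1/2}$, and $c_1 \le c_0(1 + \mathrm{diam}(\Omega)^2)^{1/2}$ since every coarse edge satisfies $|e_k| \le H \le \mathrm{diam}(\Omega)$.

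There is no genuine obstacle in this argument; the only two points requiring care are the one-dimensional Sobolev embedding that makes the point evaluations (hence $\hat{\cal I}^\order$) well-defined and continuous on $H^{\order+1}(\hat e)$, and the verification that the Lagrange interpolant commutes with the affine change of variables so that the scaling bookkeeping applies verbatim.
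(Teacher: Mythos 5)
Your proposal is correct. The paper does not actually write out a proof of this lemma; it simply cites standard references (Proposition 1.5 and 1.12 of the finite element interpolation notes, and Lemma 4.2 of the hp-VEM paper), and the argument you give — well-posedness of the Lagrange interpolator via the one-dimensional embedding $H^{p+1}(\hat e)\hookrightarrow C^0(\overline{\hat e})$, the Bramble--Hilbert lemma on the reference interval since $I-\hat{\cal I}^p$ annihilates $\mathbb{P}_p(\hat e)$, and the affine scaling identities to transport the estimate to $e_k$ — is precisely the standard machinery those references rely on, so in effect you have filled in the proof the paper delegates. Your bookkeeping of the powers of $|e_k|$ and the derivation of $c_1=c_0(1+H^2)^{1/2}\le c_0(1+\mathrm{diam}(\Omega)^2)^{1/2}$ are accurate; the only cosmetic remark is that the constant depends on $p$ through the fixed choice of reference interpolation nodes (as you note parenthetically), which is consistent with the paper writing $c_0=c_0(p)$.
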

	\begin{proof}
		See Proposition 1.5 and 1.12 of \cite{feinterp}. See also Lemma 4.2 and Remark 4 of \cite{hpvem} for estimates with explicit dependency on $\order$ and higher dimension.
	\end{proof}

	
	\begin{lemma}[Gagliardo-Nirenberg interpolation inequality]\label{lem:gag}
		Let $\Omega$ be Lipschitz domain in $\mathbb{R}^d$
		and $\|\cdot\|_{s,q}$ denote the Slobodeskii-Sobolev norm in $W^{s,q}(\Omega)$, $s\geq 0,q\geq 1$ (see the reference below).
		Let $s,s_1,s_2\geq 0$, $1\leq q, q_1, q_2 \leq \infty$ and $\theta \in (0,1)$ be such that
		$$
		s = \theta s_1  + (1-\theta) s_2, \quad \frac{1}{r} = \frac{\theta}{q_1}+\frac{1-\theta}{q_2}.
		$$
		Then,  there exists $c_{GN} = c_{GN}(s_1, q_1, s_2, q_2)>0$ such that 
		$$
		\|u\|_{s,q} \leq c_{GN} \|u\|_{s_1,q_1}^\theta \|u\|_{s_2,q_2}^{1 - \theta},
		$$
		for all $u \in  W^{s_1, q_1}(\O) \cap W^{s_2,q_2}(\O)$
		as long as the following condition fails:
		$s_2$ is an integer $\geq 1$, $q_2 = 1$ and $s_2-s_1 \leq 1- 1/q_1$.
	\end{lemma}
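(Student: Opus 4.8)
The plan is to treat this as a classical harmonic--analytic inequality: reduce it, by an extension argument, to the model case $\Omega=\mathbb{R}^d$, and there derive it from a Littlewood--Paley decomposition and Hölder's inequality in the frequency variable. First I would use that a Lipschitz domain admits a bounded linear extension operator $E\colon W^{\sigma,q}(\Omega)\to W^{\sigma,q}(\mathbb{R}^d)$ which is bounded simultaneously over the entire range $\sigma\ge 0$, $1\le q\le\infty$ that appears in the statement (for instance Rychkov's universal extension operator, or Stein's operator for integer smoothness together with interpolation for the fractional exponents). Since $\|u\|_{W^{s,r}(\Omega)}\le\|Eu\|_{W^{s,r}(\mathbb{R}^d)}$ while $\|Eu\|_{W^{\sigma,q}(\mathbb{R}^d)}\le C_E\|u\|_{W^{\sigma,q}(\Omega)}$, it suffices to prove the estimate on $\mathbb{R}^d$, at the price of replacing $c_{GN}$ by a fixed multiple of itself.

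On $\mathbb{R}^d$ I would pass to the Besov/Triebel--Lizorkin description of the Slobodeckij scale: for integer $m\ge 0$ and $1<q<\infty$ one has $W^{m,q}=F^{m}_{q,2}$ with equivalent norms, for non-integer $s>0$ and $1\le q<\infty$ one has $W^{s,q}=B^{s}_{q,q}$, and $W^{0,q}=L^q$ (with the usual modifications at the endpoint $q=\infty$). Fixing a dyadic partition of unity and writing $u=\sum_{j\ge -1}\Delta_j u$, the norms in play become comparable to $\bigl\|\big(2^{\sigma j}\|\Delta_j u\|_{L^q}\big)_j\bigr\|_{\ell^q}$ in the Besov cases, and to the square function $\bigl\|\big(\sum_j 2^{2\sigma j}|\Delta_j u|^2\big)^{1/2}\bigr\|_{L^q}$ in the Triebel--Lizorkin cases. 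The hypotheses $s=\theta s_1+(1-\theta)s_2$ and $1/r=\theta/q_1+(1-\theta)/q_2$ are exactly what lets one factor $2^{sj}=(2^{s_1 j})^{\theta}(2^{s_2 j})^{1-\theta}$ and $|\Delta_j u|=|\Delta_j u|^{\theta}|\Delta_j u|^{1-\theta}$ and then apply Hölder's inequality in $j$ --- and, in the Triebel--Lizorkin cases, pointwise in $x$ on the square functions --- to reach $\|u\|_{W^{s,r}}\le C\,\|u\|_{W^{s_1,q_1}}^{\theta}\,\|u\|_{W^{s_2,q_2}}^{1-\theta}$, with $C$ depending only on $d$ and on $(s_1,q_1,s_2,q_2)$ through the Littlewood--Paley projectors and the norm-equivalence constants, not on $u$.

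The last thing to check is that the forbidden configuration --- $s_2$ an integer $\ge 1$, $q_2=1$, and $s_2-s_1\le 1-1/q_1$ --- is precisely the one in which the identification $W^{s_2,1}=F^{s_2}_{1,2}$ fails (already $F^{0}_{1,2}$ is the Hardy space $H^1\ne L^1$) and the smoothness gap $s_2-s_1$ is too small to be compensated by a Sobolev embedding; outside that regime the three spaces all belong to the Besov--Triebel--Lizorkin family and the two previous steps go through.

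The main obstacle --- and the reason the statement is genuinely subtle rather than a one-line consequence of abstract interpolation --- is exactly this last point: the family $\{W^{\sigma,q}\}$ is not a single interpolation scale, but alternates between Triebel--Lizorkin spaces at integer smoothness and Besov spaces at fractional smoothness, so the $q=1$ exception is real and has to be tracked. For that reason I would, in the paper itself, simply cite a comprehensive reference (for example the systematic treatment of Gagliardo--Nirenberg inequalities by Brezis and Mironescu) rather than reproduce the entire harmonic--analysis argument.
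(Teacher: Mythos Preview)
Your final recommendation --- to simply cite a comprehensive reference such as Brezis--Mironescu rather than reproduce the harmonic-analysis argument --- is exactly what the paper does: its entire proof reads ``See Theorem 1 of [reference] for details.'' The detailed Littlewood--Paley sketch you provide is a correct and informative outline of how such a result is established, but it goes well beyond what the paper itself supplies; in the paper this lemma is purely a cited black box, invoked only to obtain the $H^{1/2}$ interpolation inequality in Corollary~\ref{cor:cor}.
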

	\begin{proof}
		See Theorem 1 of \cite{gagagain} for details.
	\end{proof}
	
	\begin{corollary}[Interpolation in $H^s$]\label{cor:cor}
		Let $\|\cdot\|_s$, $s\geq 0$ denote the $H^s(\O)=W^{s,2}(\O)$ norm, where $\O$ satisfies the assumptions of Lemma \ref{lem:gag}. Let  $0 \leq \theta \leq 1$ and $s = \theta s_1  + (1-\theta) s_2$. There exists $c_{GN} = c_{GN}(s_1, s_2)>0$ such that 
		$$
		\|u\|_{s} \leq c_{GN} \|u\|_{s_1}^\theta \|u\|_{s_2}^{1 - \theta}, 
		$$
		for all $u \in  H^{\text{max} (s_1, s_2)}(\O).$
		
		\begin{proof}
			The result follows from Lemma \ref{lem:gag}, setting $q_1=q_2=2$ and using standard Sobolov embedding theory. 
		\end{proof} 
	\end{corollary}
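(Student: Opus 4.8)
The plan is to obtain the estimate as an immediate specialization of the Gagliardo--Nirenberg inequality of Lemma \ref{lem:gag}, taking all integrability exponents equal to $2$. First I would dispose of the degenerate cases that lie outside the scope of Lemma \ref{lem:gag}: if $\theta = 0$ then $s = s_2$ and the claimed bound holds trivially with $c_{GN} = 1$, and symmetrically if $\theta = 1$; likewise the inequality is trivial when $s_1 = s_2$. Hence it suffices to consider $\theta \in (0,1)$ with $s_1 \neq s_2$, which is exactly the regime covered by Lemma \ref{lem:gag}.

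Next I would apply Lemma \ref{lem:gag} with $q_1 = q_2 = 2$ and the given exponent $\theta$. The hypothesis $s = \theta s_1 + (1-\theta)s_2$ is as required, and the reciprocal-exponent identity becomes $\tfrac1r = \tfrac{\theta}{2} + \tfrac{1-\theta}{2} = \tfrac12$, so $r = 2$. Consequently the left-hand side of the Gagliardo--Nirenberg estimate is $\|u\|_{s,2} = \|u\|_s$ and the right-hand side is $c_{GN}\|u\|_{s_1,2}^\theta\|u\|_{s_2,2}^{1-\theta} = c_{GN}\|u\|_{s_1}^\theta\|u\|_{s_2}^{1-\theta}$, where the constant $c_{GN}(s_1,2,s_2,2)$ now depends only on $s_1$ and $s_2$. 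I would also verify the admissibility proviso: the excluded configuration in Lemma \ref{lem:gag} requires $q_2 = 1$, which is violated here since $q_2 = 2$, so no further restriction on $s_1, s_2$ is needed.

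Finally, I would check that the hypothesis $u \in H^{\max(s_1,s_2)}(\O)$ matches the integrability requirement of Lemma \ref{lem:gag}. Since $\O$ is a bounded Lipschitz domain, the Sobolev scale is nested, $H^{t}(\O) \subset H^{t'}(\O)$ for $t \geq t'$, so $H^{\max(s_1,s_2)}(\O) = H^{s_1}(\O) \cap H^{s_2}(\O) = W^{s_1,2}(\O) \cap W^{s_2,2}(\O)$, which is precisely the space of admissible $u$ in Lemma \ref{lem:gag} for the chosen exponents. This closes the argument. The proof is essentially bookkeeping; the only point requiring genuine care is the treatment of the endpoints $\theta \in \{0,1\}$, which fall outside the open interval $(0,1)$ assumed in Lemma \ref{lem:gag} and must be handled separately as above.
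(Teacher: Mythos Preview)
Your proposal is correct and follows essentially the same approach as the paper: set $q_1=q_2=2$ in Lemma~\ref{lem:gag} and invoke the nesting of Sobolev spaces on a bounded Lipschitz domain to identify $H^{\max(s_1,s_2)}(\O)$ with $W^{s_1,2}(\O)\cap W^{s_2,2}(\O)$. Your treatment is in fact more careful than the paper's one-line proof, since you explicitly verify that the exclusion clause of Lemma~\ref{lem:gag} is inactive (as $q_2=2\neq1$) and separately handle the trivial endpoint cases $\theta\in\{0,1\}$.
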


    \begin{lemma}\label{lemma_extension}
    Let $1/2 < s\leq 1$ and let $g \in H^s(\sig)$ satisfy $g|_{\partial (\O_j \cap \G)} = 0$. Then, the function
    $\widetilde{g}: \partial D_j \rightarrow \R$ defined by
    $$
    \widetilde{g}(x) = \left\{\begin{array}{lll}
        g(x) & x\in \partial (\O_j \cap \G), \\
        0 & x \in \partial D_j \setminus \partial (\O_j \cap \G),
    \end{array}
            \right.
    $$
    belongs to $H^s(\partial D_j)$.
    \end{lemma}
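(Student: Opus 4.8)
The plan is to reduce the claim to a standard characterization of the space $H^s$ for $1/2<s\le 1$ on a one-dimensional closed curve, namely the Sobolev--Slobodeskii (Gagliardo) seminorm, and to show that extending $g$ by zero across the ``new'' boundary points (the nodes of $\nodes$ lying on $\partial\O_S$) does not create any blow-up of that seminorm. First I would fix notation: $\partial D_j$ is a closed polygonal curve; $\partial(\O_j\cap\G) = \partial D_j \cap \G$ is (a finite union of) closed subarcs of $\partial D_j$, each having two endpoints that are coarse nodes in $\nodes\cap\partial\O_S$; the complement $\partial D_j\setminus\partial(\O_j\cap\G)$ consists of the arcs of $\partial D_j$ lying on $\partial\O_S$. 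Since $H^s$ of a finite union of disjoint closed arcs is just the product of the $H^s$ spaces on each arc, and since extension by zero on the interior of the ``perforation arcs'' away from endpoints is trivial, the whole question localizes to a neighborhood of a single node $\x_s\in\nodes\cap\partial\O_S$: there one subarc carries a function $g$ with $g(\x_s)=0$ and $g\in H^s$, the adjacent subarc carries the zero function, and I must show the glued function is still $H^s$ near $\x_s$.

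The key step is this local gluing estimate. Parametrize arclength near $\x_s$ so that $\x_s$ corresponds to $t=0$, with $g$ defined for $t\in(0,\delta)$ and the extension being $0$ for $t\in(-\delta,0)$. The $H^s$-seminorm of $\widetilde g$ on $(-\delta,\delta)$ decomposes into three Gagliardo double integrals: over $(0,\delta)^2$ (which is just $|g|_{H^s(0,\delta)}^2$, finite by hypothesis), over $(-\delta,0)^2$ (which vanishes, since $\widetilde g\equiv 0$ there), and the cross term
$$
2\int_{-\delta}^{0}\int_{0}^{\delta} \frac{|g(t)|^2}{|t-\tau|^{1+2s}}\,\d t\,\d\tau .
$$
Carrying out the $\tau$-integration over $(-\delta,0)$ bounds this, up to a constant depending only on $s$, by $\int_0^\delta |g(t)|^2\, t^{-2s}\,\d t$, i.e.\ by a weighted (Hardy-type) integral of $g$. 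So the task becomes: for $1/2<s\le 1$ and $g\in H^s(0,\delta)$ with $g(0)=0$, one has $\int_0^\delta |g(t)|^2 t^{-2s}\,\d t \le C\,\|g\|_{H^s(0,\delta)}^2$. This is exactly the classical Hardy inequality in fractional Sobolev spaces (valid precisely because $s>1/2$, so that point values make sense and the vanishing trace condition $g(0)=0$ is meaningful); it can be invoked as a known result, or proved by writing $g(t)=\int_0^t g'(\sigma)\d\sigma$ when $s=1$ and interpolating, or directly from the Gagliardo seminorm by the standard argument comparing $|g(t)|^2/t^{2s}$ against $\int_0^t |g(t)-g(\sigma)|^2|t-\sigma|^{-1-2s}\d\sigma$ plus a lower-order term. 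Summing the finitely many local contributions and adding the (trivially controlled) $L^2$ part gives $\widetilde g\in H^s(\partial D_j)$ with a bound on its norm in terms of $\|g\|_{H^s(\sig)}$.

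The main obstacle — and the only place the hypotheses $s>1/2$ and $g|_{\partial(\O_j\cap\G)}=0$ are genuinely used — is precisely this fractional Hardy inequality near the gluing nodes: without the vanishing endpoint condition the zero extension would introduce a jump and fail to be $H^s$ for $s>1/2$, and without $s>1/2$ the trace $g(\x_s)$ would not even be well-defined. Everything else (the decomposition of $\partial D_j$ into arcs, the splitting of the double integral, the handling of the $L^2$ norm, and the reduction to finitely many nodes) is routine bookkeeping. I would therefore structure the write-up as: (i) localize to one node; (ii) split the Gagliardo seminorm and isolate the cross term; (iii) reduce the cross term to the weighted integral; (iv) invoke/prove the fractional Hardy inequality; (v) reassemble.
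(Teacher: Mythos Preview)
Your proposal is correct and follows the standard route for this kind of zero-extension lemma: localize near each gluing node, split the Gagliardo double integral, and control the cross term via the fractional Hardy inequality, which is precisely where both hypotheses $s>1/2$ and $g=0$ at the nodes are consumed. The paper itself does not give a proof but simply calls the result ``quite basic'' and refers to Lemma~5 of \cite{brenner2016gradient}; your outline is essentially the argument one finds there, so there is nothing to compare---you have supplied what the paper omits.
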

    \begin{proof}
    The proof is quite basic, for the full proof we refer e.g. to Lemma 5 of \cite{brenner2016gradient}.
    \end{proof}

 	\begin{lemma}\label{lemma_lifting}
 	Let $g \in H^1_0(\G_j)$, then there exists $\phi \in H^1(\O_j)$ satisfying $\phi|_{\G_j} = g$ such that
 	$$
 	|\phi|_{H^1(\O_j)} \leq c_{GN} C_j \| g \|^{1/2}_{L^2(\G_j)} \| g \|^{1/2}_{H^1(\G_j)} ,
 	$$
 	where $C_j$ depends only on $D_j$.	
 	\end{lemma}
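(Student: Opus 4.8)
The plan is to transport $g$ off the partial boundary $\G_j := \partial\O_j\setminus\partial\O_S$ onto the full polygonal boundary $\partial D_j$, lift it into the perforation-free polygon $D_j$ by a bounded extension operator, and restrict the result back to $\O_j\subset D_j$; the geometric-mean bound will then come from an $H^{1/2}$-interpolation inequality applied to the extended trace. The reason to route through $D_j$ rather than lift directly into $\O_j$ is that $D_j$ is a clean Lipschitz polygon whose trace constants do not see the perforation pattern.

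First I would extend $g$ by zero onto $\partial D_j$. Since $g\in H^1_0(\G_j)$, it vanishes on $\partial(\O_j\cap\G)$, so Lemma \ref{lemma_extension} with $s=1$ guarantees that the zero extension $\widetilde g : \partial D_j\to\R$ lies in $H^1(\partial D_j)$. Because $\widetilde g$ and its weak tangential derivative vanish identically on $\partial D_j\setminus\G_j$, one moreover has the exact identities $\|\widetilde g\|_{L^2(\partial D_j)}=\|g\|_{L^2(\G_j)}$ and $\|\widetilde g\|_{H^1(\partial D_j)}=\|g\|_{H^1(\G_j)}$; in particular $\widetilde g\in H^{1/2}(\partial D_j)$.

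Next I would invoke the trace theorem on the Lipschitz polygon $D_j$: there exists a bounded linear extension operator $E_j : H^{1/2}(\partial D_j)\to H^1(D_j)$ (for instance the harmonic lifting) with $(E_j\psi)|_{\partial D_j}=\psi$ and $\|E_j\psi\|_{H^1(D_j)}\leq C_j\|\psi\|_{H^{1/2}(\partial D_j)}$, the constant $C_j$ depending only on $D_j$. Setting $\phi := (E_j\widetilde g)|_{\O_j}$, the inclusion $\O_j\subset D_j$ gives $|\phi|_{H^1(\O_j)}\leq\|E_j\widetilde g\|_{H^1(D_j)}\leq C_j\|\widetilde g\|_{H^{1/2}(\partial D_j)}$, while $\G_j\subset\partial D_j$ together with $\widetilde g=g$ on $\G_j$ yields $\phi|_{\G_j}=g$. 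To close the estimate I would apply Corollary \ref{cor:cor} on $\partial D_j$ with $s_1=1$, $s_2=0$, $\theta=1/2$, obtaining $\|\widetilde g\|_{H^{1/2}(\partial D_j)}\leq c_{GN}\,\|\widetilde g\|_{H^1(\partial D_j)}^{1/2}\,\|\widetilde g\|_{L^2(\partial D_j)}^{1/2}$; substituting the two exact identities from the previous step gives exactly $|\phi|_{H^1(\O_j)}\leq c_{GN}C_j\,\|g\|_{L^2(\G_j)}^{1/2}\,\|g\|_{H^1(\G_j)}^{1/2}$.

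The argument is a chain of standard zero-extension, trace, and interpolation facts, so the only delicate points I would check carefully are: (i) that the zero extension across the endpoints of $\G_j$ — which are precisely the coarse nodes lying on $\partial\O_S$ — indeed lands in $H^1(\partial D_j)$, which is exactly the content of Lemma \ref{lemma_extension} and is why the hypothesis $g\in H^1_0(\G_j)$ is needed; and (ii) that the extension constant $C_j$ (equivalently, that the Gagliardo–Nirenberg constant on the closed curve $\partial D_j$, obtained after a bi-Lipschitz arc-length parametrization onto an interval or circle, so that Corollary \ref{cor:cor} applies) depends on $D_j$ alone and not on the perforations inside $\O_j$. Neither point requires genuine computation; the routine verifications — that the trace of a restriction equals the restriction of the trace on Lipschitz domains, and the exactness of the zero-extension norms — I would leave implicit.
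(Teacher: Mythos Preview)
Your proposal is correct and follows essentially the same route as the paper: extend $g$ by zero to $\partial D_j$ via Lemma~\ref{lemma_extension}, lift into $D_j$ by the trace/extension theorem with constant $C_j$, restrict to $\O_j$, and then bound the $H^{1/2}$ trace norm by the geometric mean of the $L^2$ and $H^1$ norms via Corollary~\ref{cor:cor}. The paper's proof is terser but identical in structure; your additional remarks on why one routes through $D_j$ and on the two delicate verification points are accurate and helpful, though not strictly needed.
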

 	\begin{proof}
 	In view of Lemma \ref{lemma_extension}, we can extend $g$ on $\partial D_j$ by zero so that the extension $\widetilde{g}$ is in $H^1(\partial D_j)$.  Then, there exists $\widetilde{\phi} \in H^1(\O_j)$ such that $\widetilde{\phi} = \widetilde{g}$ on $\partial D_j$ and 
 	$$
 	| \widetilde{\phi}  |_{H^1(D_j)} \leq C_j  \| \widetilde{g} \|_{H^{1/2}(\partial D_j)}.
 	$$
 	Using Sobolev interpolation, we have
  			\begin{align}
 	| \widetilde{\phi}  |_{H^1(D_j)} 
 	& \leq  c_{GN} C_j  \| \widetilde{g} \|^{1/2}_{L^2(\partial D_j)} \| \widetilde{g} \|^{1/2}_{H^1(\partial D_j)},  \nonumber \\
 	 &	=  c_{GN} C_j  \| g \|^{1/2}_{L^2(\G_j)} \| g \|^{1/2}_{H^1(\G_j)}.  \nonumber 
 	\end{align}
 	The result follows by setting $\phi = \widetilde{\phi}|_{\O_j}$.
\end{proof}


With preliminary lemmas established, we present the following proposition, which is a novel contribution of this article.

\begin{proposition}[Interpolation error estimate]\label{prop_iterpolation_error}
Let $\left( \gamma_l \right)_{l= 1, \ldots,  N_\gamma}$ be a finite nonoverlapping partitioning of $\G$ and $v$ the element of $H^1_{\Delta}(\O)$ such that the traces of $v$ belong to $H^{\order+1}(\gamma_l)$ for all $l$ and some $p = 1, 2, \ldots $.  Assume in addition that the set of coarse edges $ \left( e_k\right)_k$ is a subdivision of $ \left( \gamma_l\right)_l$.  Then,  there exists $\phi \in H^1(\O)$ such that $\phi|_\G \in \VHpG$ and satisfying 
		\begin{equation}\label{H1_est}
			| v - \phi |_{H^1(\Omega)} \leq C H^{ \order + \frac{1}{2}} 
			\left( \sum^{N_\gamma}_{l=1}  | v |^2_{H^{\order+1}(\gamma_l)} \right)^{1/2}.
		\end{equation}			
 \end{proposition}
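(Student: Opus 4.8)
The plan is to construct $\phi$ explicitly: prescribe its skeleton trace to be the edgewise Lagrange interpolant of $v|_\G$, lift this trace into $H^1_\Delta(\O)$, and then estimate $w:=v-\phi$ cell by cell, exploiting that both $v$ and $\phi$ are piece-wise harmonic so that $w$ is itself the minimal-energy extension of its own skeleton trace. Concretely, first set $g|_{e_k}:={\cal I}_k^{\order}(v|_{e_k})$ on each coarse edge, where ${\cal I}_k^{\order}$ is the degree-$\order$ Lagrange interpolator of Lemma~\ref{lem_interp_1d} attached to a node set containing the endpoints of $e_k$. Since $(e_k)_k$ refines $(\gamma_l)_l$ and $v|_{\gamma_l}\in H^{\order+1}(\gamma_l)$ with $\order\ge1$, the trace $v|_\G$ is continuous across the coarse nodes (a jump there being incompatible with the $H^{1/2}$-regularity of $v|_{\partial\O_j}$), so $g$ is well defined, continuous on $\overline\G$, piece-wise polynomial, i.e.\ $g\in\VHpG$, and $g$ coincides with $v$ at every node of $\nodes$. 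Let $\phi\in\VHp$ be the unique function with $\phi|_\G=g$ (obtained by lifting $g$ to $H^1(\O)$ --- possible because $g-v|_\G$ vanishes at the coarse nodes, cf.\ Lemma~\ref{lemma_extension} --- and projecting onto $H^1_\Delta(\O)$ along $H^1(\O)=H^1_\Delta(\O)\oplus H^1_\G(\O)$). Then $w=v-\phi$ lies in $H^1_\Delta(\O)$ and $w|_\G=v|_\G-g$ vanishes at all coarse nodes.

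Next I localize. Fix a cell $\O_j$ and set $\G_j:=\partial\O_j\setminus\partial\O_S$, a union of coarse edges. By the definition of $H^1_\Delta(\O)$, $(w|_{\O_j},z)_{H^1(\O_j)}=0$ for every $z\in H^1_{\G_j}(\O_j)$; hence $w|_{\O_j}$ minimizes $|\cdot|_{H^1(\O_j)}$ among all functions in $H^1(\O_j)$ with the same trace on $\G_j$ (any competitor equals $w|_{\O_j}$ plus a function vanishing on $\G_j$, and the cross term drops by orthogonality). Consequently $|w|_{H^1(\O_j)}\le|\psi_j|_{H^1(\O_j)}$ for \emph{any} lifting $\psi_j\in H^1(\O_j)$ with $\psi_j|_{\G_j}=w|_{\G_j}$. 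Since $w|_{\G_j}\in H^1(\G_j)$ and vanishes at the coarse nodes bounding $\G_j$, it belongs to $H^1_0(\G_j)$, so Lemma~\ref{lemma_lifting} furnishes such a $\psi_j$ with
$$
|w|_{H^1(\O_j)}\ \le\ c_{GN}\,C_j\,\|w\|_{L^2(\G_j)}^{1/2}\,\|w\|_{H^1(\G_j)}^{1/2},
$$
where $C_j$ depends only on $D_j$ and $\|w\|_{L^2(\G_j)},\|w\|_{H^1(\G_j)}$ denote the norms of the trace $w|_{\G_j}$.

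Finally I insert the one-dimensional interpolation estimates and assemble. Each $e_k\subset\G_j$ sits inside a single $\gamma_l$, so $w|_{e_k}=(v-{\cal I}_k^{\order}v)|_{e_k}$ and Lemma~\ref{lem_interp_1d} gives $\|w\|_{L^2(e_k)}\le c_0 H^{\order+1}|v|_{H^{\order+1}(e_k)}$ and $\|w\|_{H^1(e_k)}\le c_1 H^{\order}|v|_{H^{\order+1}(e_k)}$. Summing the squares over the coarse edges composing $\G_j$ --- over which the $L^2$- and $H^1$-norms are additive --- and substituting into the previous display yields $|w|_{H^1(\O_j)}^2\le c_{GN}^2 C_j^2 c_0 c_1\,H^{2\order+1}\sum_{e_k\subset\G_j}|v|_{H^{\order+1}(e_k)}^2$. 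Summing over $j$, and using that each coarse edge belongs to at most two cells and that for integer $\order+1$ the seminorm is additive over the refinement ($\sum_{e_k\subset\gamma_l}|v|_{H^{\order+1}(e_k)}^2=|v|_{H^{\order+1}(\gamma_l)}^2$), I obtain $|v-\phi|_{H^1(\O)}^2\le 2\,c_{GN}^2 c_0 c_1(\max_j C_j)^2\,H^{2\order+1}\sum_{l=1}^{N_\gamma}|v|_{H^{\order+1}(\gamma_l)}^2$, which is \eqref{H1_est} with $C=\sqrt2\,c_{GN}\sqrt{c_0 c_1}\,\max_j C_j$.

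I expect the localization step to be the crux: the reduction of $|v-\phi|_{H^1(\O_j)}$ to the seminorm of an arbitrary convenient lifting relies squarely on $v-\phi\in H^1_\Delta(\O)$ and on the energy-minimizing nature of that space, and the sharp half-power $H^{\order+1/2}$ is produced not by a single-norm trace inequality but precisely by the multiplicative Gagliardo--Nirenberg bound $\|\cdot\|_{L^2}^{1/2}\|\cdot\|_{H^1}^{1/2}$ built into Lemma~\ref{lemma_lifting}, which averages the two interpolation powers $H^{\order+1}$ and $H^{\order}$. The remaining points --- continuity of $v|_\G$ so the local liftings glue into a global $\phi\in H^1(\O)$, the membership $w|_{\G_j}\in H^1_0(\G_j)$, uniformity of $C_j$ over $j$, and additivity of the relevant norms over the edge partition --- are routine.
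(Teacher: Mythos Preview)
Your proof is correct and follows essentially the same route as the paper: edgewise Lagrange interpolation of the trace, Lemma~\ref{lemma_lifting} for the multiplicative $\|\cdot\|_{L^2}^{1/2}\|\cdot\|_{H^1}^{1/2}$ lifting bound on each cell, then the one-dimensional estimates of Lemma~\ref{lem_interp_1d} and summation over cells. The only cosmetic difference is that the paper sets $\phi|_{\O_j}:=v|_{\O_j}-\psi_j$ directly (so $\phi$ need not lie in $H^1_\Delta(\O)$ and one has the equality $|v-\phi|_{H^1(\O_j)}=|\psi_j|_{H^1(\O_j)}$), whereas you choose $\phi\in\VHp$ and recover the same bound as an inequality via the energy-minimizing property of $H^1_\Delta(\O)$.
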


 	\begin{proof}
  Since $u \in H^1(\gamma_l)$, it follows that $u\in C^0(\overline{\gamma_l})$. However, being in $H^{1/2}(\G)$, $u$ can not have  discontinuities. Therefore $u\in H^1(\G)$ and thus is continuous on $\overline{\G}$.
  
  Let ${\cal I}^\order_\Gamma$ be an interpolation operator from $C^0(\Gamma)$ to $\VHpG$
  such that ${\cal I}^\order_\Gamma v|_{e_k} = {\cal I}^\order_k v$ for all $k$,  where ${\cal I}^\order_k$ is defined in Lemma \ref{lem_interp_1d}.  Let $E_\Gamma =  u|_\Gamma - {\cal I}^\order_\Gamma (v|_\Gamma)$. Since $E_\Gamma(x) = 0$ for every $x \in {\cal V}$,  it follows from Lemma \ref{lemma_lifting} that
 there exists $\psi_j \in H^1(\O_j)$ satisfying $\psi_j|_\Gamma = \widetilde{E}_\Gamma$ such that 
        $$
        |  \psi_j |_{H^1(\O_j)} \leq c_{GN} C_j \| E_\Gamma \|^{1/2}_{L^2(\G_j)} \| E_\Gamma \|^{1/2}_{H^1(\G_j)}.
        $$
 Let $\phi \in H^1(\O)$ be defined as $\phi|_{\O_j} = \uharm|_{\Omega_j} - \psi_j |_{\Omega_j}$ for all $\O_j$.  We have
        \begin{align}
        | v - \phi|_{H^1(\O_j)} 
        & \leq c_{GN} C_j \| E_\Gamma \|^{1/2}_{L^2(\G_j)} \| E_\Gamma \|^{1/2}_{H^1(\G_j)}, \nonumber \\
        & \leq c_{GN} C_j  \left( \sum_l ||E_{\G,j}||^2_{L^{2}(\gamma_{l} \cap \G_j ) } \right)^{1/4} \left( \sum_l \|E_{\G,j}\|^2_{H^{1}(\gamma_{l} \cap \G_j) } \right)^{1/4}. \nonumber        
        \end{align}
  Since $(e_k)_k$ is a subdivision of $(\gamma_l)_l$, we deduce from Lemma \ref{lem_interp_1d}
  \begin{equation}\label{eq_estimate_H12D}
  | v - \phi|_{H^1(\O_j)} 
  \leq  c_{GN} c_0 c_1 C_j \,  H^{\order+1/2} \left( \sum_l | u |^2_{H^{\order+1}(\gamma_{l} \cap \G_j)} 
  \right)^{1/2}.
    \end{equation} 
     By summing \eqref{eq_estimate_H12D} over all $\O_j$, we obtain \eqref{H1_est} with $C = \sqrt{2} c_{GN} c_0 c_1 \max_j{C_j}$.
\end{proof}

 	\begin{proposition}[Best approximation]\label{prop_best_approx}
 	The solution of \eqref{eq:femvariational} satisfies
		$$
		| u_\Delta - u_{\Delta,H}  |_{H^1(\O)} \leq | u_\Delta - \phi  |_{H^1(\O)},
		$$
		for any $\phi \in H^1(\O)$ such that $\phi|_\G \in \VHpG$.
 	\end{proposition}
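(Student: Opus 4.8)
The statement is a C\'ea-type quasi-optimality estimate, and I would obtain it by combining Galerkin orthogonality with the orthogonal splitting $H^1(\O)=H^1_\Delta(\O)\oplus H^1_\G(\O)$. As a first step I would record the Galerkin orthogonality. By the very definition of $\VHp$ one has $\VHp\subset H^1_\Delta(\O)$, so the discrete test space $\VHp\cap H^1_{\partial \O \setminus \partial \O_S}(\O)$ of \eqref{eq:femvariational} is a subspace of the space $H^1_\Delta(\O)\cap H^1_{\partial \O \setminus \partial \O_S}(\O)$ in which $u_\Delta$ is characterized by \eqref{weak_HP_1}. Testing \eqref{weak_HP_1} and \eqref{eq:femvariational} against the same $v_H\in\VHp\cap H^1_{\partial \O \setminus \partial \O_S}(\O)$ and subtracting gives
\[
 (u_\Delta-u_{\Delta,H},\,v_H)_{H^1(\O)}=0\qquad\text{for all }v_H\in\VHp\cap H^1_{\partial \O \setminus \partial \O_S}(\O).
\]

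Next I would reduce an arbitrary competitor to one lying in the discrete space. Let $\phi\in H^1(\O)$ with $\phi|_\G\in\VHpG$, and decompose it along $H^1(\O)=H^1_\Delta(\O)\oplus H^1_\G(\O)$ as $\phi=\phi_\Delta+\phi_b$, with $\phi_\Delta\in H^1_\Delta(\O)$ and $\phi_b\in H^1_\G(\O)$. Since $\phi_b|_\G=0$, we get $\phi_\Delta|_\G=\phi|_\G\in\VHpG$, hence $\phi_\Delta\in\VHp$; because $\partial\O\setminus\partial\O_S\subset\G$, the function $\phi_\Delta$ moreover shares the trace of $\phi$ on $\partial\O\setminus\partial\O_S$, so $\phi_\Delta\in\VHp\cap H^1_{\partial \O \setminus \partial \O_S}(\O)$ whenever $\phi$ itself satisfies the homogeneous Dirichlet condition on $\partial\O\setminus\partial\O_S$ --- which is the case for every competitor used in the error analysis, such as the interpolant constructed in Proposition~\ref{prop_iterpolation_error} applied to $u_\Delta$. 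As $u_\Delta\in H^1_\Delta(\O)$, the functions $u_\Delta-\phi_\Delta\in H^1_\Delta(\O)$ and $\phi_b\in H^1_\G(\O)$ are $H^1(\O)$-orthogonal, so
\[
 |u_\Delta-\phi|_{H^1(\O)}^2=|u_\Delta-\phi_\Delta|_{H^1(\O)}^2+|\phi_b|_{H^1(\O)}^2\ \ge\ |u_\Delta-\phi_\Delta|_{H^1(\O)}^2 .
\]

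Finally I would carry out the classical C\'ea estimate with $\phi_\Delta$ as competitor: since $\phi_\Delta-u_{\Delta,H}\in\VHp\cap H^1_{\partial \O \setminus \partial \O_S}(\O)$, the Galerkin orthogonality gives $(u_\Delta-u_{\Delta,H},\,\phi_\Delta-u_{\Delta,H})_{H^1(\O)}=0$, hence
\[
 |u_\Delta-u_{\Delta,H}|_{H^1(\O)}^2=(u_\Delta-u_{\Delta,H},\,u_\Delta-\phi_\Delta)_{H^1(\O)}\le|u_\Delta-u_{\Delta,H}|_{H^1(\O)}\,|u_\Delta-\phi_\Delta|_{H^1(\O)} ;
\]
dividing (the case $u_\Delta=u_{\Delta,H}$ being trivial) and chaining with the previous display yields $|u_\Delta-u_{\Delta,H}|_{H^1(\O)}\le|u_\Delta-\phi|_{H^1(\O)}$.

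I do not expect a real obstacle here: the whole proof is the standard orthogonal-projection computation and requires no regularity of the data. The only point that deserves an explicit line is the reduction step, i.e.\ that the locally harmonic component $\phi_\Delta$ of $\phi$ again belongs to the admissible discrete space; this uses the inclusion $\partial\O\setminus\partial\O_S\subset\G$ together with the identity $\phi_\Delta|_\G=\phi|_\G$, after which the orthogonality of $H^1_\Delta(\O)$ and $H^1_\G(\O)$ does the rest.
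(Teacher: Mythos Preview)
Your proof is correct and follows essentially the same route as the paper: first the best-approximation property of $u_{\Delta,H}$ within $\VHp$, then the reduction of an arbitrary $\phi$ to its locally harmonic component $\phi_\Delta\in\VHp$ (which is exactly the paper's $v_H$) via the orthogonal splitting $H^1(\O)=H^1_\Delta(\O)\oplus H^1_\G(\O)$, which the paper phrases equivalently as an energy-minimization argument. The caveat you raise about the Dirichlet trace on $\partial\O\setminus\partial\O_S$ is a point the paper simply glosses over.
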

 	\begin{proof}
  
 	We first remark that,  because $u_{\Delta,H}$ is the projection of $\uharm$ on $\VHp$,  it satisfies 
\begin{equation}\label{best_approx_VH}
		    | \uharm - u_{\Delta,H} |_{H^1(\Omega)} \leq | u_\Delta - v_H  |_{H^1(\Omega)},
		\end{equation}
		for any $v_H \in V_{H,p}$. Given  some  $\phi \in H^1(\O)$ such that $\phi|_\G \in \VHpG$, we set $v_H \in V_{H,p}^\G$ be such that $v_H|_\G = \phi|_\G$. Since $u_\Delta - v_H \in H^1_\Delta(\O)$,
		 we have that
\begin{equation}\label{best_approx_VH_in_H1}
     |u_\Delta - v_H|_{H^1(\O)} \leq |u_\Delta - \phi|_{H^1(\O)},
     \end{equation}
     that is to say that $u_\Delta - v_H$ is the minimizer
		of $\psi \mapsto | \psi |_{H^1(\O)}$ over the set $\{ \psi \in H^1(\O_j) \, | \,  \psi =  u_\Delta - v_H \mbox{ on } \G_i  \}$.
  The result follows by combining \eqref{best_approx_VH} and \eqref{best_approx_VH_in_H1}.
 	\end{proof}
From the propositions \ref{prop_best_approx} and \ref{prop_iterpolation_error}, we deduce the following error estimate regarding the locally harmonic part of the solution. 

\begin{theorem}\label{thm:thm}
Let $u$ be a solution of \eqref{model_weak} and $\uharm$ its $H^1$ projection on $H^1_\Delta(\O)$.  Let $u_{\Delta,H}$ satisfy \eqref{eq:femvariational}.  Then,  under the assumptions of Proposition \ref{prop_iterpolation_error} with $\uharm$ instead of $v$,
$$
| \uharm - u_{\Delta,H} |_{H^1(\Omega)} \leq C H^{ \order + \frac{1}{2}} 
		\left( \sum^{N_\gamma}_{l=1}  | u |^2_{H^{\order+1}(\gamma_l)} \right)^{1/2}.
$$
\end{theorem}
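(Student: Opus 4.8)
The plan is to obtain the estimate as an immediate consequence of the best-approximation property (Proposition \ref{prop_best_approx}) and the constructive interpolation bound (Proposition \ref{prop_iterpolation_error}), the only genuinely new ingredient being the observation that $\uharm$ and $u$ share the same trace on $\G$.

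First I would recall that $u_{\Delta,H}$ defined by \eqref{eq:femvariational} is exactly the $H^1(\O)$-orthogonal projection of $\uharm$ onto $\VHp \cap \H10(\O)$. Indeed, $\uharm$ solves \eqref{weak_HP_1} and $\VHp \cap \H10(\O)$ is a subspace of $H^1_\Delta(\O) \cap \H10(\O)$, so subtracting \eqref{eq:femvariational} from \eqref{weak_HP_1} tested against any $v \in \VHp \cap \H10(\O)$ gives $(\uharm - u_{\Delta,H}, v)_{H^1(\O)} = 0$, and $u_{\Delta,H}$ itself lies in that space. Proposition \ref{prop_best_approx} then yields
$$
| \uharm - u_{\Delta,H} |_{H^1(\O)} \le | \uharm - \phi |_{H^1(\O)}
$$
for every $\phi \in H^1(\O)$ with $\phi|_\G \in \VHpG$.

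Second I would apply Proposition \ref{prop_iterpolation_error} with the choice $v := \uharm$. By the hypothesis of the theorem, $\uharm \in H^1_\Delta(\O)$ has traces in $H^{\order+1}(\gamma_l)$ for every $l$, and $(e_k)_k$ is a subdivision of $(\gamma_l)_l$, so the proposition supplies a specific $\phi \in H^1(\O)$ with $\phi|_\G \in \VHpG$ and
$$
| \uharm - \phi |_{H^1(\O)} \le C H^{\order + \frac12} \Big( \sum_{l=1}^{N_\gamma} | \uharm |^2_{H^{\order+1}(\gamma_l)} \Big)^{1/2}.
$$
Inserting this $\phi$ into the previous display already gives the desired bound, but with $\uharm$ in place of $u$ on the right-hand side.

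Finally I would identify the two seminorms. Since $u - \uharm = \uloc \in H^1_\G(\O)$, the trace of $u - \uharm$ on $\G$ vanishes, hence $\uharm|_\G = u|_\G$; as each $\gamma_l$ is contained in $\G$, the seminorm $|\cdot|_{H^{\order+1}(\gamma_l)}$ depends only on the trace along $\gamma_l$, so $|\uharm|_{H^{\order+1}(\gamma_l)} = |u|_{H^{\order+1}(\gamma_l)}$ for every $l$. Substituting this equality into the chain of inequalities produces the claimed estimate. I do not expect a real obstacle here: the argument is essentially bookkeeping, and the only points that warrant a line of care are verifying that $u_{\Delta,H}$ is the relevant orthogonal projection so that Proposition \ref{prop_best_approx} is applicable, and that $\uharm$ inherits the essential boundary condition on $\partial\O\setminus\partial\O_S$ from $u$ so that the comparison is carried out within the correct affine space.
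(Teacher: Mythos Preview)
Your proposal is correct and follows exactly the paper's approach: the paper simply states that the theorem is deduced from Propositions \ref{prop_iterpolation_error} and \ref{prop_best_approx} without writing out any details. Your write-up correctly fills in those details, including the trace identification $\uharm|_\G = u|_\G$ (needed to replace $|\uharm|_{H^{\order+1}(\gamma_l)}$ by $|u|_{H^{\order+1}(\gamma_l)}$) that the paper leaves implicit.
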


	\begin{remark}

 \blue{Theorem \ref{thm:thm} expresses the approximation properties of $V_{H,p}$ in $H_\Delta^1(\O)$, or the approximation of $u_\Delta$ by $u_{\Delta,H}$. In addition, the estimate of the theorem holds for $u - (u_{\Delta,H} + u_b)$, where $u_b$ satisfies \eqref{weak_HP_2}. However, we remark that $u_{\Delta,H}$ also provides a low-order approximation of $u$.} Indeed, in view of the orthogonality between $H^1_\Delta(\Omega)$ and $H^1_{\Gamma}(\Omega)$, we have
  $$
  |u-u_{\Delta,H}|^2_{H^1(\O)} =  |u_\Delta-u_{\Delta,H}|^2_{H^1(\O)} + 
  |u_b|^2_{H^1(\O)}.
  $$
  While the first term in the right-hand side can be estimated based on Theorem \ref{thm:thm}, the $H^1$ norm of $u_b$ can be controlled using \eqref{weak_HP_2} and local Poincaré inequalities. More precisely, we have
  $$
  |u_b|_{H^1(\O_j)} \leq C_{P,j} {\rm diam}(\Omega_j) \| f \|_{L^2(\Omega_j)},
  $$
where $C_{P,j}$ denote the Poincaré constants associated to $\O_j$.

	\end{remark}

 \begin{remark}
	The broken $H^{k+1}$ norm in the right-hand side of \eqref{H1_est} involves only the traces of the solution along the sections of the coarse skeleton.  Therefore,  this estimate is valid for $u$ having low general regularity that is due,  for example, to corner singularities.  As a matter of fact,  the estimate \eqref{H1_est} provides an a priori criterion for the adaptation of the coarse mesh: one has to ensure that the edge norm in the right-hand side is small.  For a sufficiently regular right-hand side $f$, this can be achieved by moving the coarse edges away from the ``bad'' perforation corners.  
 \end{remark}

 	We further note that Theorem \ref{thm:thm} is especially valuable for a so-called space or edge refinement,  which is a procedure that involves splitting the edges of an otherwise fixed coarse grid.  In that case, one observes superconvergence of the error with a rate of $k+\frac{1}{2}$.

	\section{Discrete Trefftz Space and Two-level Schwarz Method}\label{sec:2lvl}

We begin with the algebraic formulation and implementation of the problem.
Let us consider a triangulation $\Tt$ of $\O$ which is assumed to be conforming with respect to the  polygonal partitioning  $\left( \O_j \right)_{j= 1,\ldots, N}$ (see Figure \ref{fig:triang}).
We denote by $\Nn_{\overline{\O}}$ the set of the triangulation nodes.
In order to account for Dirichlet boundary condition imposed on $\partial \O \setminus \partial \O_S$, we introduce a set of internal nodes
		$$
		\Nn_{\O} = \{ \ \mathbf{x}_s \in \Nn_{\overline{\O}} | \, \ \mathbf{x}_s \notin \partial \O \setminus \partial \O_S \}.
		$$ 
		The total number of nodes in $\Nn_{\O}$ is denoted by $N_{\O}$.    
   
We denote by $V_h$ the space of functions that are continuous and triangle-wise polynomial on each triangle of $\Tt$, where $h$ denotes the maximal element diameter,  and by
$V_{h,0}$ the  space $V_h \cap \H10(\O)$.  
  The finite element method for the variational problem \eqref{model_weak} consists in finding 
		$u_h \in V_{h,0}(\O)$ such that 
		\begin{equation}
			a(u_h, v_h) = (f, v_h) \qquad \forall \, v_h \in V_{h,0}(\O).
		\end{equation}
		
		Let us denote by $(\eta_\bs)_{\bs \in \Nn_{\overline{\O}} }$ the set of the standard  finite element  basis functions associated to the nodal degrees of freedom.   The associated ``fine-scale'' finite element discretization results in the linear system 
	\begin{equation}\label{Au_f}
		\bA \mathbf{u}=\mathbf{f},
	\end{equation}
		where $\bA$ is a $N_{\O} \times N_{\O}$ matrix with elements 
		\begin{equation*}
			A_{ij} = a(\eta_{\s_j}, \eta_{\s_i})  = \int_\O \nabla \eta_{\s_j} \cdot \nabla \eta_{\s_i} \, \dx,
		\end{equation*}
		and  $\bff$ is the vector from $\R^{N_{\O}}$ with components $ f_j = \int_\O f \eta_{\s_j} \, \dx$.

	Because the triangular mesh resolves the fine-scale structure of the domain,  the system may be quite large and the size of the triangular elements may vary by several orders of magnitude.  As a result, the matrix $\bA$ is expected to be poorly conditioned.

 \subsection{Discrete Trefftz Approximation}
 
    In most practical situations, the coarse basis functions defined by \eqref{eq:laplacedir}  
	can not be computed analytically.
 Therefore, we consider the finite element approximation of $V_{H,p}$.
 The basis of the discrete Trefftz space is obtained through the finite element approximation of \eqref{eq:laplacedir}. 
	The finite element discretization of \eqref{eq:laplacedir} results in the system of the form 
	$$
	 \tilde{\bA}_j \boldsymbol{\phi}_{s}^j=\mathbf{b}_s^j,
	$$
	where $\tilde{\bA}_j $ is the local stiffness matrix and $\mathbf{b}_s^j$ accounts for the Dirichlet boundary data in \eqref{eq:laplacedir}.
\blue{Let $\overline{\bR}_j$ denote the boolean restriction matrices corresponding to the nodes of $\overline{\O_j}$, and let $\boldsymbol{\phi}_s$ be a global vector which, when restricted to a subdomain $\O_j$, returns $\boldsymbol{\phi}_s^j$. That is, 
$$\boldsymbol{\phi}_s=  \sum_{j \in \mathcal{N}_s}  \overline{\bR}_j^T \overline{\mathbf{D}}_j \boldsymbol{\phi}_{s}^j,$$
        for $s=1, \ldots, N_\nodes$, where  $\overline{\mathbf{D}}_j$ are partition of unity matrices corresponding to $\overline{\O_j}$ and
        $\mathcal{N}_s= \{ \,  j \,  \,| \, \, \mathbf{x}_s  \, \, \text{is contained in}  \, \, \O_j  \, \}.$ }
	The discrete Trefftz space is then defined as the
 span of the basis functions $\boldsymbol{\phi}_s,  s=1, \ldots, N_{H,p}$. Discretely, the coarse transition matrix $\bR_H$ is such that the $k$th row of $\bR_H$ is given by $\boldsymbol{\phi}_k^T$ for $k=1, \ldots, N_\nodes$.
 
 
 The finite element counterpart of \eqref{eq:femvariational} and therefore the discrete coarse approximation from Section \ref{sec:contandapprox} can be expressed algebraically as 
 \begin{equation}\label{eq:disccoarseapprox}
     \bu_{\Delta,H}= M_H^{-1} \bff,
 \end{equation}
where $$M_H^{-1}=\bR_H^T(\bR_H\bA \bR_H^T)^{-1}\bR_H.$$	

 \subsection{Discrete Two-level Schwarz Method}
 	Now, we will show how the coarse approximation introduced in the previous section can be combined with the Restricted Additive Schwarz (RAS) method to construct a simple yet efficient iterative linear solver for the fine-scale finite element method. 
  
	Let $\left( \O_j'\right)_{j= 1,\ldots, N}$ denote the overlapping partitioning of $\O$ such that $\O_j \subset \O_j'$.  In practice,  each $\O_j'$ is constructed by propagating $\O_j$ by a few layers of triangles. 



  We denote by $\bR_j'$ the boolean restriction matrices corresponding to the nodes of $\O_j'$
and set $\bA'_j =  \bR_j' \bA \left(\bR_j'\right)^T$. Here, the boolean matrices $\bR_j'$ are of size $(N_{j} \times N_{\O})$, where $N_j$ denotes the number of degrees of freedom in $\O_j'$ and $N_{\O}$ denotes the number of degrees of freedom in $\O$.

Given this framework, we introduce the following iterative procedure. Generally, we take the coarse approximation as the initial iterate $\bu^{0}=\bu_{\Delta,H}$. This allows us to begin with the accuracy of the coarse approximation and continue to iterate to finite element precision. With this, the iteration is given by
	\begin{equation}\label{eq:iterativeras}
		\begin{array}{llll}
			\bu^{n+\frac{1}{2}} &= \bu^{n}+ M_{RAS}^{-1}(\mathbf{f}- \bA \bu^{n}), \\
   	\bu^{n+1} &= \bu^{n+\frac{1}{2}}+ M_{H}^{-1}(\mathbf{f}- \bA \bu^{n+\frac{1}{2}}),\\
		\end{array}
	\end{equation}
	where  $$\dsp M_{RAS}^{-1}=\sum_{j=1}^{N} \left(\bR_j'\right)^T\bD_j(\bA'_j)^{-1}\bR'_j,$$ and
	 $\bD_j$ denote the partition-of-unity matrices satisfying 
$$I_N= \sum_{j=1}^{N} (\bR_j')^T \bD_j (\bR_j'),$$ where $I_N$ is the identity matrix of size $(N_{\O} \times N_{\O})$.

	Additionally, we introduce the preconditioned system
	\begin{equation}\label{eq:precond}
		M_{RAS,2}^{-1}\bA \bu= M_{RAS,2} ^{-1}\mathbf{f},
	\end{equation}
	where the two-level discrete Restricted Additive Schwarz preconditioner is given additively by 
	$$	M_{RAS,2} ^{-1}= \bR_H^T(\bR_H\bA \bR_H^T)^{-1}\bR_H+ \sum_{j=1}^{N} \left(\bR_j'\right)^T\bD_j(\bA'_j)^{-1}\bR'_j.$$
	We expect the use of a well-constructed preconditioner to provide acceleration in terms of Krylov iteration count. 
 Generally, using preconditioned Krylov methods produces faster convergence than the iterative solver, both in terms of computation time and iterations.


 We note that iteration \eqref{eq:iterativeras} can be written as 
	\begin{equation}
		\begin{array}{llll}
   	\bu^{n+1} &= \bu^{n}+ (M_{H}^{-1}+M_{RAS}^{-1}( \mathbf{I}-\bA M_{H}^{-1} )   )(\mathbf{f}- \bA \bu^{n}),\\
		\end{array}
	\end{equation}
a hybrid method. We implement this hybrid iterative method as an additive method will not converge and a hybrid method is necessary for convergence. 
 However, we choose to use the additive two-level RAS preconditioner \eqref{eq:precond} for Krylov methods as it is commonly used in domain decomposition literature and has an increased capacity for parallel computing. With this, we remark that we could use a hybrid method as a Krylov accelerator and that the Schwarz preconditioners presented here are not exhaustive.

 In the context of domain decomposition, our coarse approximation is referred to as a ``coarse space" for the Schwarz methods. However, we remark that the construction of the matrix form of the coarse approximation and coarse space are identical, with the difference being solely in application.
	


	\section{Numerical Results}\label{sec:num}
	
	In this section, we illustrate the performance of the discrete Trefftz space in three different scenarios involving either a standalone Galerkin approximation \eqref{eq:femvariational}, an iterative approach \eqref{eq:iterativeras}, or a preconditioner approach \eqref{eq:precond}.

	\subsection{Coarse Approximation on L-Shaped Domain}
	To properly display the approximation properties, it is helpful to have an exact solution to the equation. However, the generation of this exact solution is difficult with multiple singularities. To combat this issue, we can use a model domain with one singularity/corner in the domain; an example of this would be an L-shaped domain with a reentering corner (Figure \ref{fig:edgeref}).  The domain is defined by $D = (-1,1)^2$,   $\O_S = (0,1)^2$ and $\O = D \setminus \overline{\O_S}$.  We consider the problem \eqref{model_pde} with zero right-hand side and a non-homogeneous Dirichlet boundary condition on $\partial \O \setminus \partial \O_S$ provided by the singular exact solution $u(r,\theta)= r^{\frac{2}{3} } \cos( \frac{2}{3}( \theta - \pi/2))$. 
	
	\begin{figure}
		\centering
		\begin{subfigure}{0.49\textwidth}
			\centering
			\includegraphics[height=4.cm, width=4.5cm]{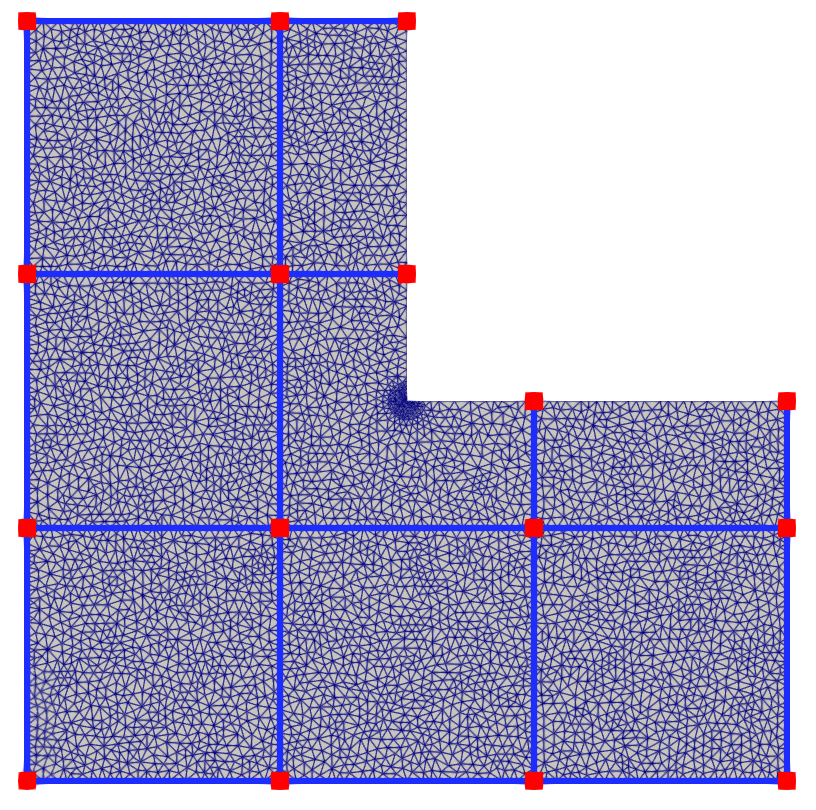}
		\end{subfigure}
		\begin{subfigure}{0.49\textwidth}
			\centering
			\includegraphics[height=4.cm, width=4.5cm]{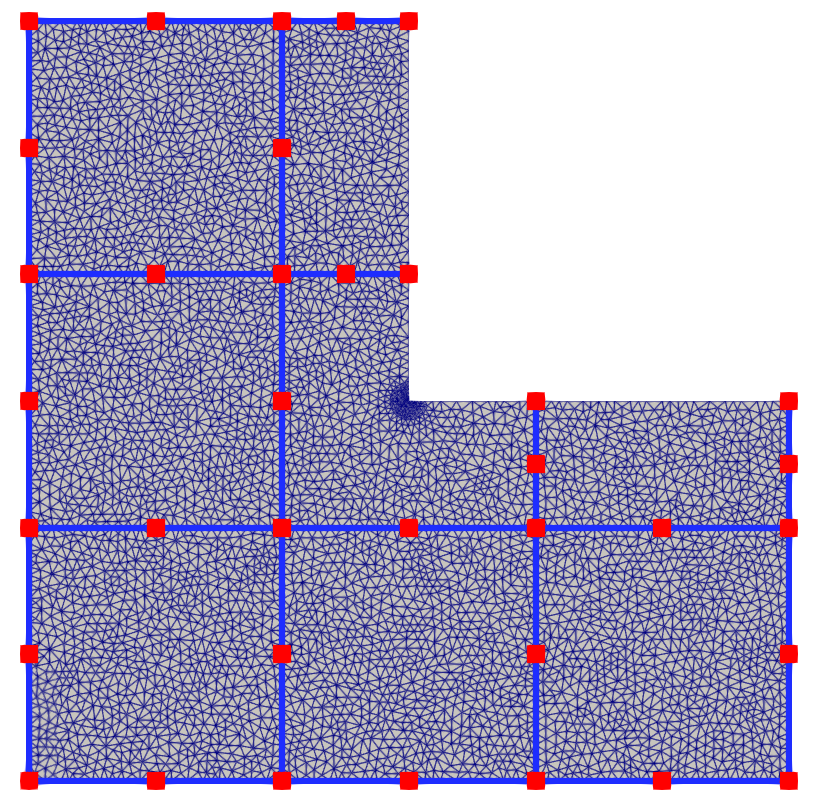}
		\end{subfigure}
		\caption{
			Coarse and fine discretizations of the L-shaped domain with 0 (left) and 1 (right) additional degree(s) of edge refinement.
		}
		\label{fig:edgeref}
	\end{figure}

	In order to assess the convergence of the discrete Trefftz method, we consider two  strategies regarding the refinement of the coarse partitioning. The procedure involving the reduction of the diameter of the coarse cells will be referred to as \emph{mesh refinement}. 
	The sequence of such meshes will be constructed as follows: first the background domain $D$
	is partitioned into $N=(2k  + 1)^2$,  $k \in \mathbb{N}$,  squares,  then the coarse cells $\O_j$ are generated by excluding $\O_S$. 
	The choice of $N$ being a square of an odd number ensures the consistency of the mesh sequence in terms of the shape of the elements. The second considered refinement strategy will be referred  to as \emph{edge refinement} procedure,  which involves subdividing the edges of an original ``$3\times 3$ grid''.  This edge refinement approach is illustrated by Figure \ref{fig:edgeref} and is inspired by the Multiscale Hybrid-Mixed method \cite{mhm}. We remark that with both refinement procedures, it is ensured that none of the coarse grids will have a degree of freedom located at the corner $(0,0)$.  As a result,  the corner singularity will be captured by the basis functions associated with the L-shaped domain.  We  also note that in order to improve the precision of the fine-scale finite element method, the size of the triangles is graded in the vicinity of the corner $(0,0)$.  
	
	Figure \ref{fig:approxerror} reports, for both mesh and edge refinement strategies, the relative error in $H^1$ semi-norm and $L^2$ norm as functions of maximal coarse edge length $H$. The black dashed line represents the typical fine-scale finite element error; here, we use $\mathbb{P}_2$ finite elements.  The relative error and the experimental order of convergence for edge refinement procedure are equally reported in Table \ref{tab_L-shaped}.   
 In accordance with the error estimate \eqref{H1_est},  for the edge refinement procedure, we observe superconvergence in the energy norm with rates of approximately $3/2$ and $5/2$ for  $p = 1$ and $p = 2$, respectively. In the $L^2$ norm, the observed convergence rates are approximately $3$ and $7/2$ for $p = 1$ and $p = 2$, respectively. 
 In contrast to the edge refinement convergence, the convergence resulting from the mesh refinement seems to be controlled by the low global regularity of the solution.  We observe the convergence rates typical for finite element methods on quasi-uniform meshes, that is, close to $2/3$ and $4/3$ in $H^1$ and $L^2$ norms, respectively. 

\begin{table}
\begin{center}
\scalebox{0.9}
{
\begin{tabular}{|c|c|c|c|c|c|c|c|c|c|c|c|c|c|c|c|c|c|c|c|}
\hline
& \multicolumn{4}{|c}{$H^1$ semi-norm} & \multicolumn{4}{|c|}{$L^2$ norm}  \\ \hline
Ref.  lvl. & $p = 1$ & eoc & $p = 2$ & eoc  & $p = 1$ & eoc & $p = 2$ & eoc   \\ \hline
0 & -1.098 & - & -1.812 & - & -1.881 & - & -2.864 & -  \\ \hline
1 &-1.601 & 1.669 & -2.665 & 2.833 & -2.762 & 2.925 & -4.026 & 3.858  \\ \hline
2 &-2.132 & 1.765 & -3.35 & 2.274 & -3.677 & 3.042 & -4.993 & 3.215  \\ \hline
3 &-2.626 & 1.641 & -4.043 & 2.305 & -4.514 & 2.781 & -6.014 & 3.389  \\ \hline
\end{tabular}
}
\caption{Relative error and the experimental order of convergence for the Trefftz approximation with $p = 1, 2$ and using the edge refinement. Ref. level refers to the degree of additional edge refinement. Eoc refers to error of convergence.}
\label{tab_L-shaped}
\end{center}
\end{table}

	\begin{figure}
		\centering
		\begin{subfigure}{0.49\textwidth}
			\centering
			\includegraphics[width=\linewidth, height=4.5cm]{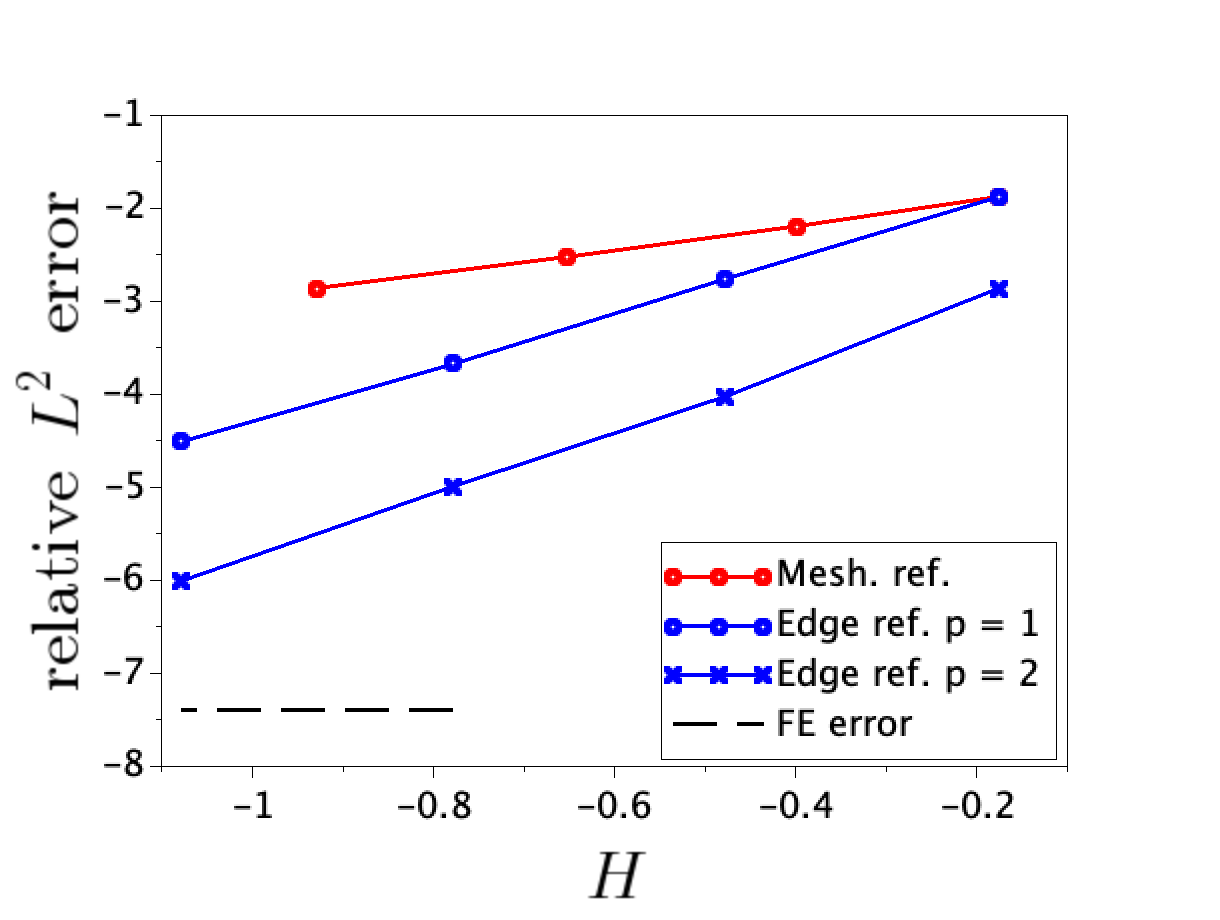}
  \end{subfigure}
		\begin{subfigure}{0.49\textwidth}
			\centering
			\includegraphics[width=\linewidth, height=4.5cm]{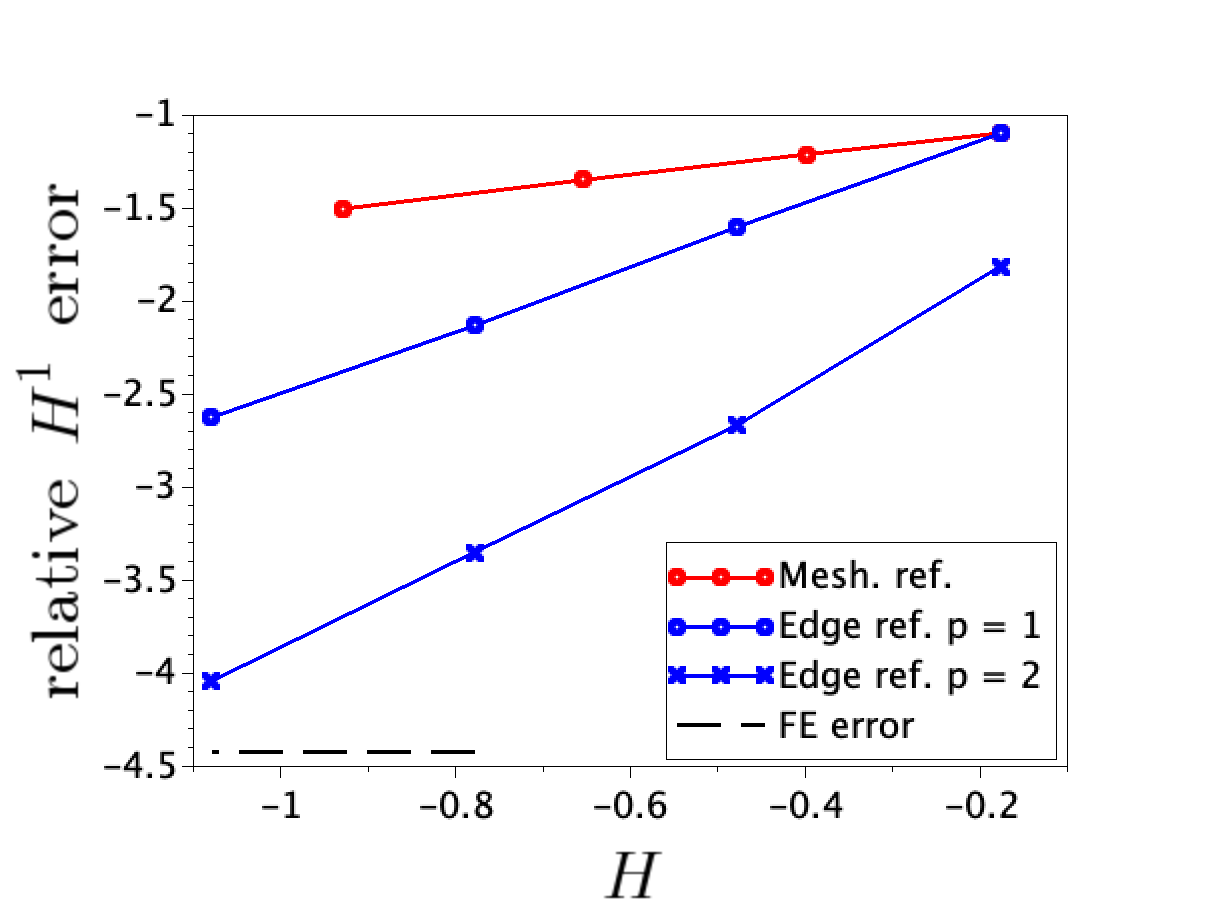}
        \end{subfigure}
		\caption{
			Coarse approximation error for L-shaped domain with edge (blue) and mesh (red) refinement in $L^2$ norm (left) and the energy norm (right). The black dashed line denotes the finite element error.
	}
	\label{fig:approxerror}
\end{figure}


\subsection{Iterative DD and Preconditioned Krylov methods on L-Shaped Domain}

We report on Figure \ref{fig:iterativelshapedoverlap} the convergence history of the iterative methods for the L-shaped domain. 
More precisely, we report two different error metrics, referred to as algebraic error and full error. 
For a given iterate, algebraic error is defined as a distance (based on $L^2$ or $H^1$ norms) between a given iterate and the solution of the algebraic system \eqref{Au_f}, while full error reflects the distance to the exact solution of \eqref{model_pde}.
Figure \ref{fig:iterativelshapedoverlap} reports the convergence histories for linear systems 
resulting from an increasingly accurate background finite element discretization. 
\blue{The overlap is set to $\smallop$, where $\mathcal{H}_j= \max (x_{max,j}-x_{min,j}, y_{max,j}-y_{min,j})$ and $x_{min,j}, y_{min,j}, x_{max,j}, y_{max,j}$ denote the minimal and maximal $x$ and $y$ coordinates that are contained in $\O_j$.} 
We consider Trefftz spaces of order $p = 1$ and $p = 2$.


Because the width of the overlap is maintained constant, we observe on Figure \ref{fig:iterativelshapedoverlap} that the convergence the iterative methods is not sensitive to the accuracy of the background finite element method until the precision of the latter is reached. The convergence of preconditioned GMRES method is exponential, while the convergence curve of the algebraic error of the two-level fixed point method \eqref{eq:iterativeras} exhibits two distinct slopes, with fast convergence at the initial stage. We note that the performance of both the stationary iteration and the preconditioned GMRES algorithm is improved by the higher order Trefftz space.
We note that the initial slope of the stationary iteration method is approaching the slope of the preconditioned GMRES as the Trefftz order increases.
In general, the results obtained in terms of the full error (right column of Figure \ref{fig:iterativelshapedoverlap}) seem to point toward the following conclusion: the stationary iteration method appears to be an acceptable alternative to the preconditioned GMRES if very high accuracy is not required.
\begin{figure}
	\centering
	\begin{subfigure}{0.49\textwidth}
		\centering
		\includegraphics[width=\linewidth, height=4cm]{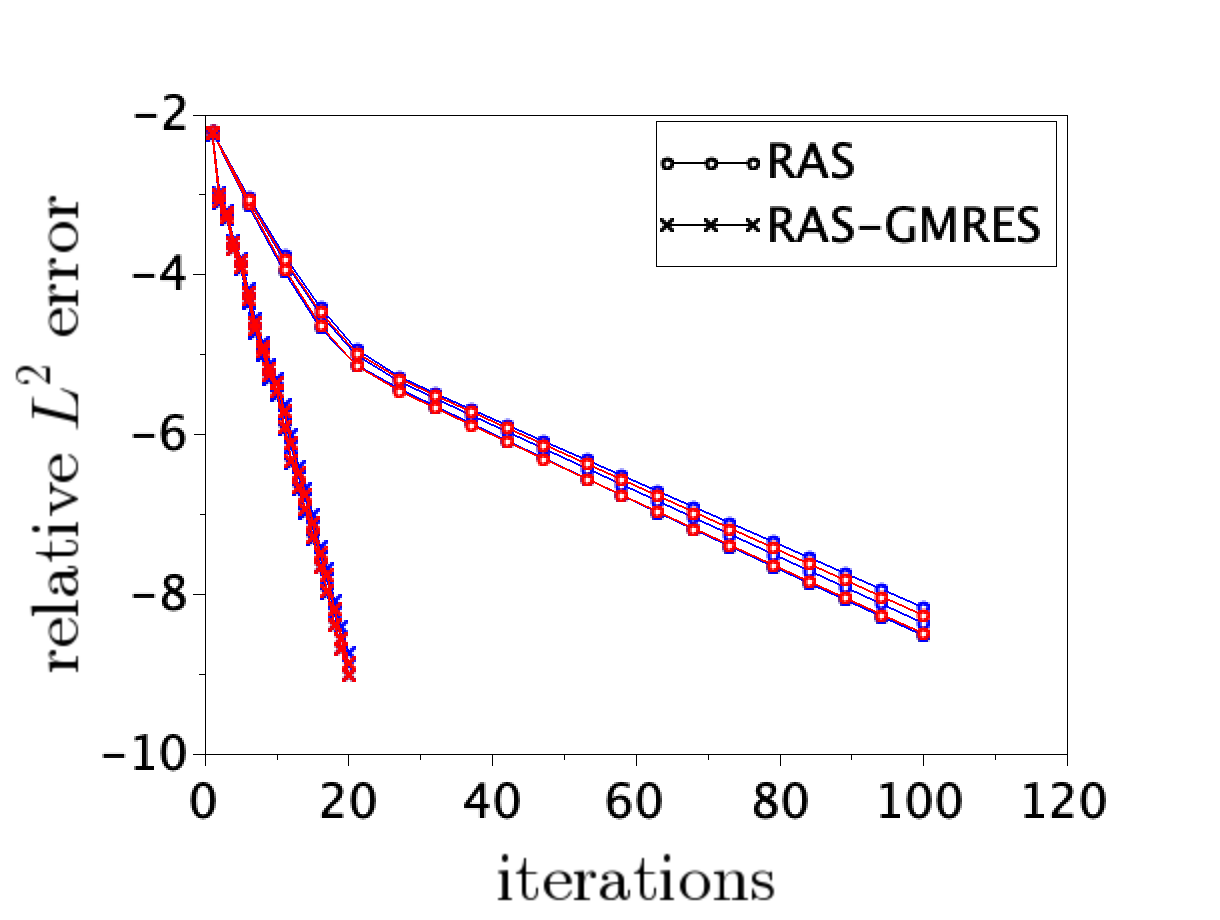}
	\end{subfigure}
	\begin{subfigure}{0.49\textwidth}
		\centering
		\includegraphics[width=\linewidth,height=4cm ]{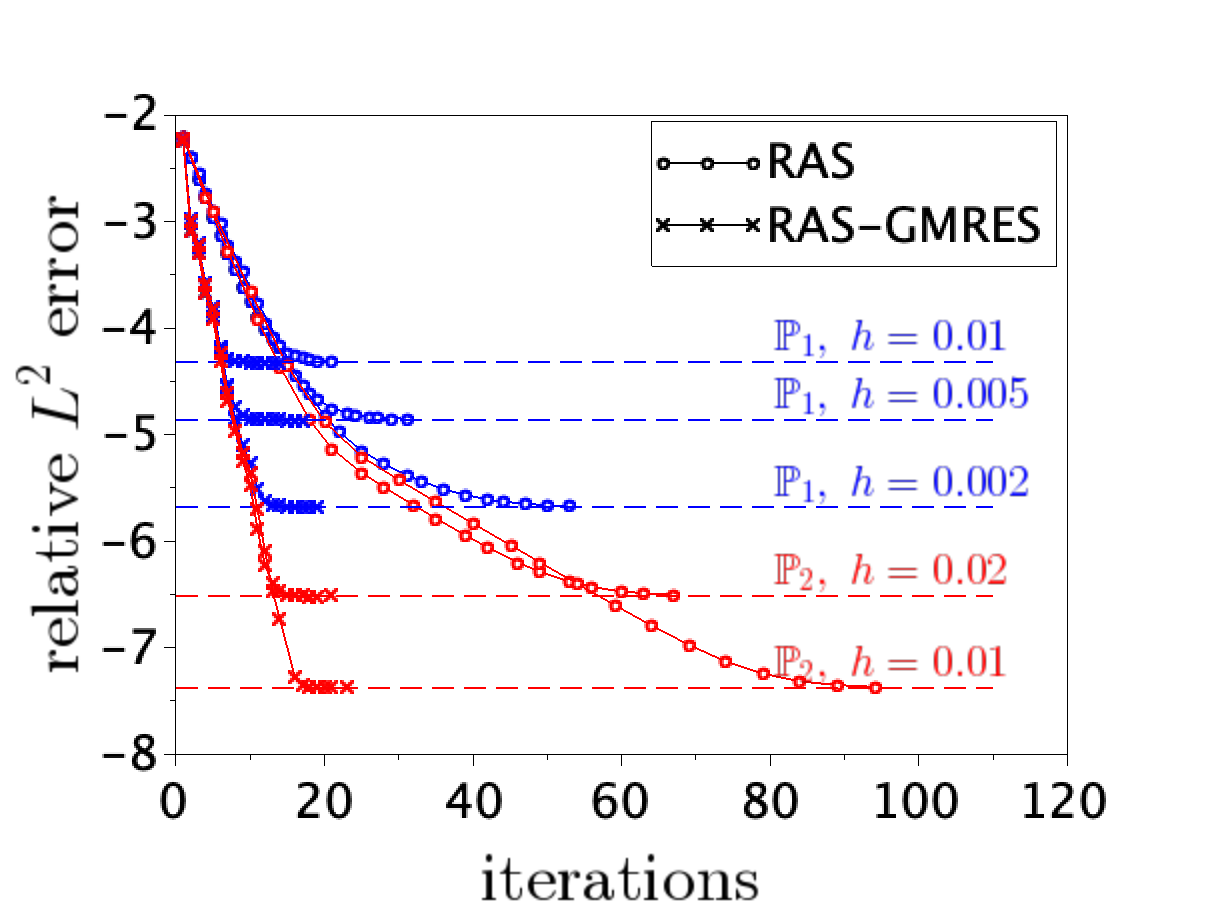}
	\end{subfigure}
 	\begin{subfigure}{0.49\textwidth}
		\centering
		\includegraphics[width=\linewidth, height=4cm]{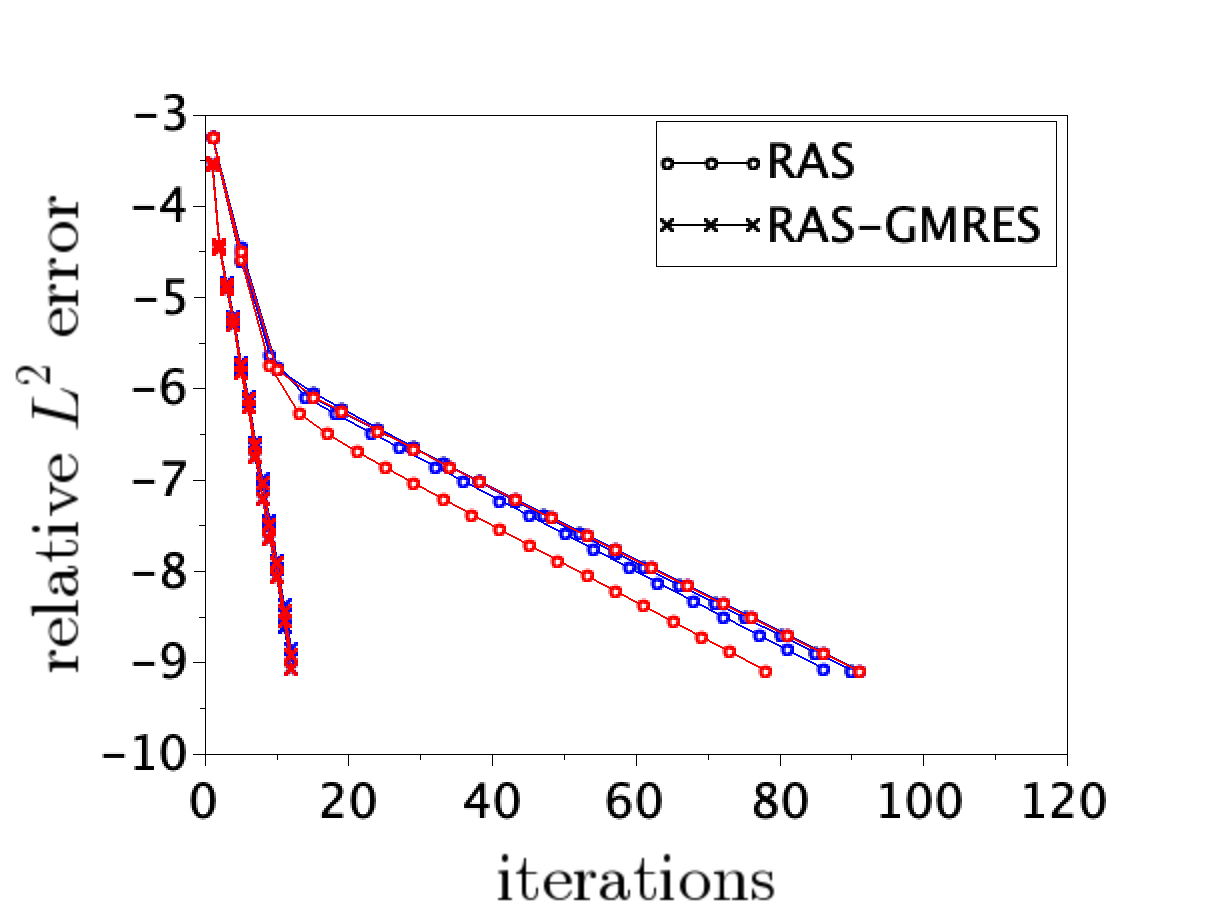}
	\end{subfigure}
	\begin{subfigure}{0.49\textwidth}
		\centering
		\includegraphics[width=\linewidth,height=4cm ]{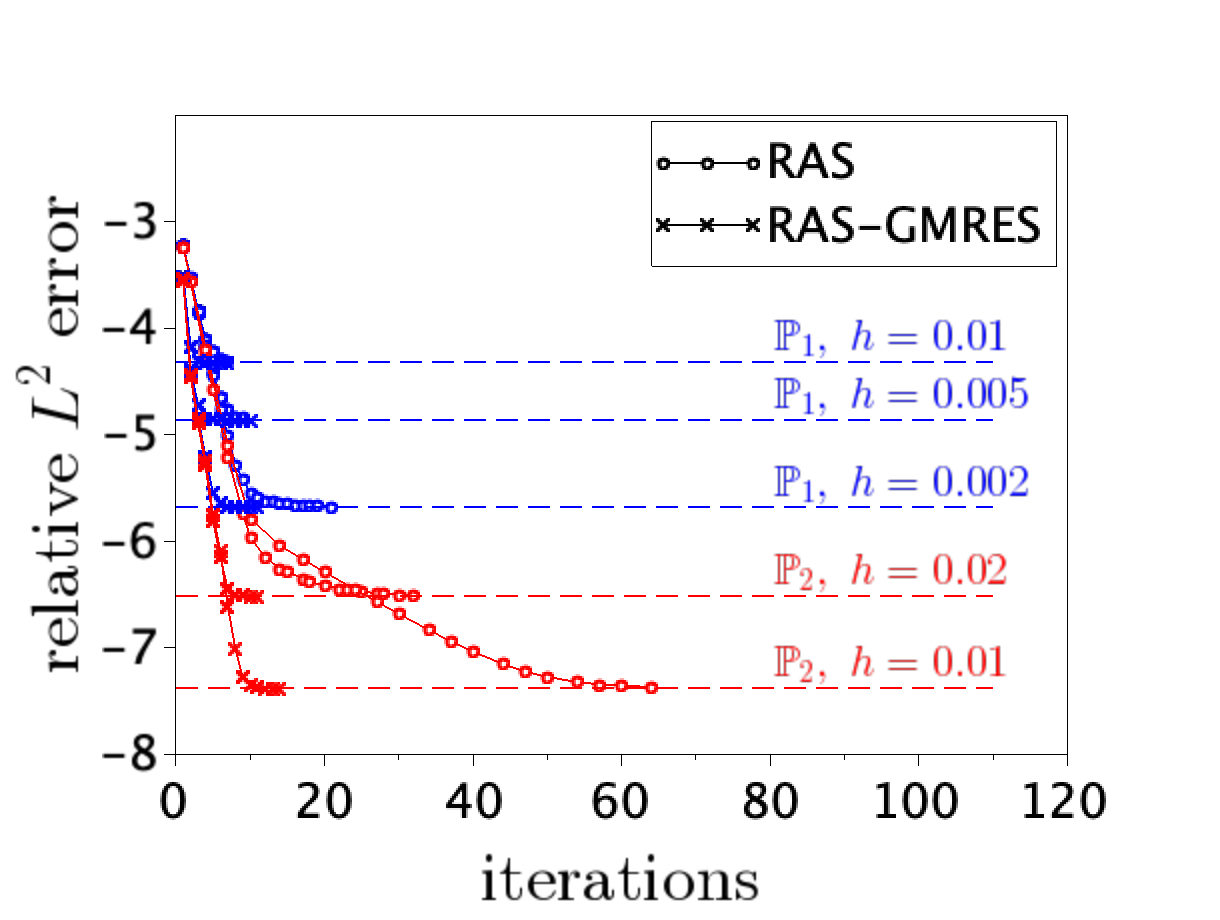}
	\end{subfigure}
	\caption{Convergence in $L^2$ of the two-level iterative (RAS) and preconditioned GMRES (RAS-GMRES) methods for the L-shaped domain on $5 \times 5$ subdomains with overlap of $\smallop$ and using first (top row) and second (bottom row) order Trefftz approximation.
		The dashed horizontal lines show the error of the fine-scale finite element method. Left to right: algebraic error,  full error. Results for finite elements of order 1 and 2 are shown in blue and red, respectively.
	}
	\label{fig:iterativelshapedoverlap}  
\end{figure}


We further study the impact of edge refinement on the performance of the iterative methods.
We report on Figure \ref{fig:iterativeLedgeref} 
the convergence of algebraic error in $H^1$ and $L^2$ norms for edge refinement up to degree three.
This is reported for both the iterative and preconditioner approaches.
We observe that reducing $H$ not only improves the initial coarse approximation (the first iteration point), but also accelerates the convergence of the iterative methods. The most notable is the impact on the initial slope of the two-level iterative RAS method.
Here, again, the initial slope of the stationary iteration method comes close to that of the preconditioned GMRES as we increase the order of edge refinement.

\begin{figure}
	\centering
	\begin{subfigure}{0.49\textwidth}
		\centering
		\includegraphics[width=.9\linewidth, height=4.5cm]{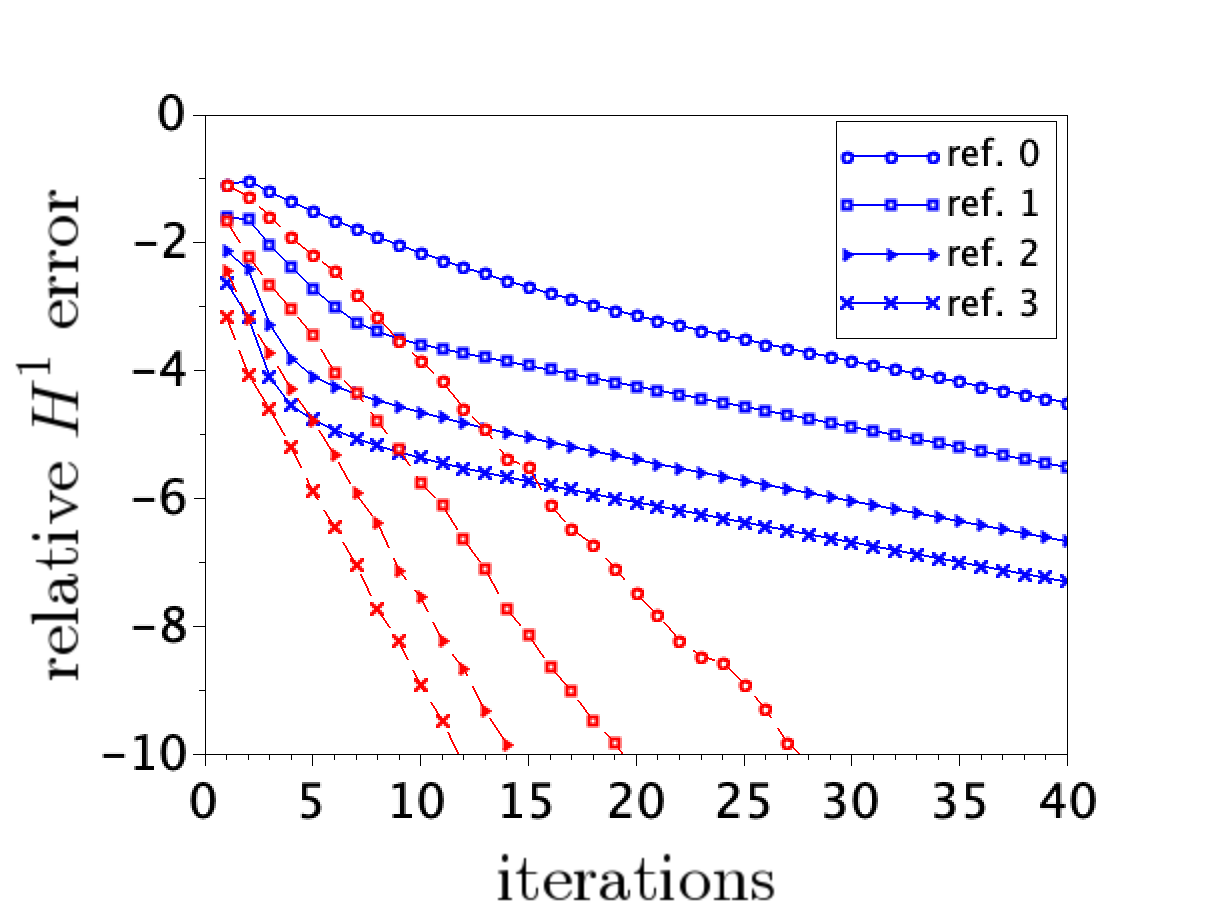}
	\end{subfigure}
	\begin{subfigure}{0.49\textwidth}
		\centering
		\includegraphics[width=.9\linewidth, height=4.5cm]{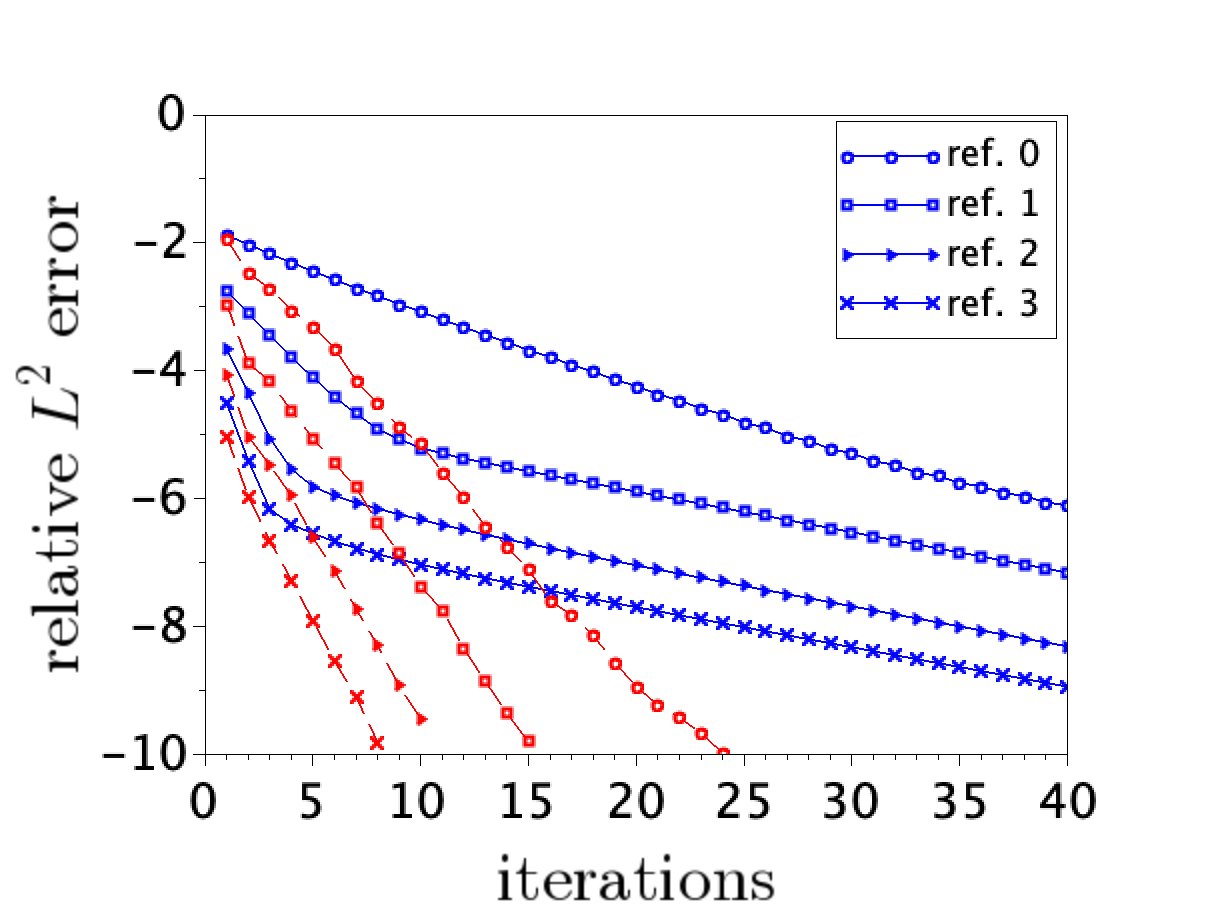}
	\end{subfigure}
	\caption{
        Convergence in $H^1$ (left) and $L^2$ (right) norms of the two-level iterative (blue) and preconditioned GMRES (red) methods for the L-shaped domain on $3 \times 3$ subdomains with overlap of $\smallop$.
        Algebraic errors are shown for the first order Trefftz space with various degrees of edge refinement.
}
\label{fig:iterativeLedgeref}
\end{figure}

\subsection{Iterative DD and Preconditioned Krylov Methods on Small Urban Domain} 
We examine the performance of the two-level stationary iteration and preconditioned GMRES method over a domain based on realistic urban geometries for which the data sets were kindly provided by Métropole Nice Côte d'Azur. 
We focus here on a relatively small spatial frame shown on the left of Figure \ref{fig:framesols}). 
The data frame is $160 \times 160$ meters and contains two kinds of structural features representing buildings (and assimilated small elevated structures) and walls in urban data. The number of perforations associated to buildings and walls is 63 and 77, respectively.
The triangulation of the domain is performed using Triangle \cite{Triangle}. 
\blue{This ``small" model domain would take a minimum of 23\, 000 degrees of freedom to triangulate without imposing maximum triangle area or constraining the mesh to match the coarse partitioning. While the fine-scale mesh will change depending on $N$, the number of degrees of freedom in the mesh will be close to this number.} Due to the geometric complexity of the computational domain, the mesh includes a number of very small triangles; the minimal triangle area is given as $5.83\times 10^{-13}$.
We consider the model problem \eqref{model_pde} with $f = 1$, which we discretize with continuous piece-wise affine finite elements; the finite element solution is reported on the left of Figure \ref{fig:framesols}.

We report on Figures \ref{fig:iterativeL80withref} and \ref{fig:iterativeL80withrefratio0} the convergence history of the stationary iterative method for different sizes of overlap.
This figure shows results for both algebraic and full errors for various fine-scale discretization sizes $h$.
The performance of the preconditioned GMRES is reported on Figures \ref{fig:L80kryratio5} and \ref{fig:L80kryratio0}.
In lieu of a true solution (which would exhibit multiple singularities), we provide a fine grid numerical solution to which the iterates are compared. The grid for the reference solution is generated via multiple levels of edge bisection (``red refinement''). We note that in contrast to the previous numerical experiment, we do not perform any mesh grading near the reentering corners of the domain; as a result, the finite element error is high.

We expose on Figure \ref{fig:iterativeL80withref} the convergence history of the two-level stationary iteration method with overlap $\smallop$. We observe that the convergence of the algebraic error is robust with respect to $h$ and that the fine-scale finite element method (black lines) can be reached in a few steps. 
Further iterations do not improve the overall precision of the approximate solution even though the algebraic error may decrease.
A stopping criterion which prevents excess iterations is warranted; such a posterioiri estimates are discussed for example in \cite{stop, vohl}. 
As expected, the performance of the stationary iteration method considerably deteriorates (see Figure \ref{fig:iterativeL80withrefratio0}) in the case of minimal geometric overlap.


\begin{figure}
\begin{subfigure}{0.49\textwidth}
	\centering
	\includegraphics[width=\linewidth, height=4cm]{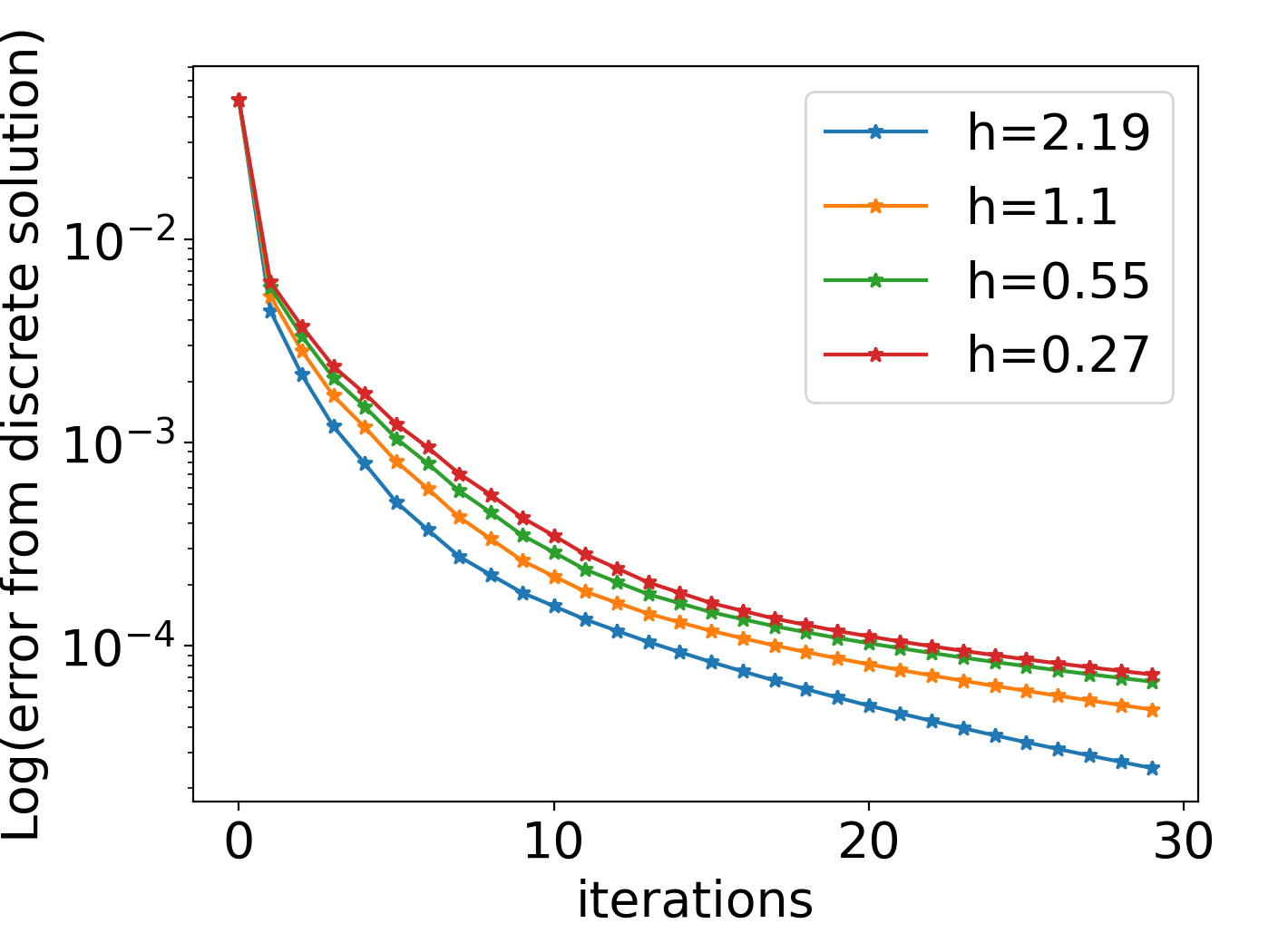}
\end{subfigure}
\begin{subfigure}{0.49\textwidth}
	\centering
	\includegraphics[width=\linewidth, height=4cm]{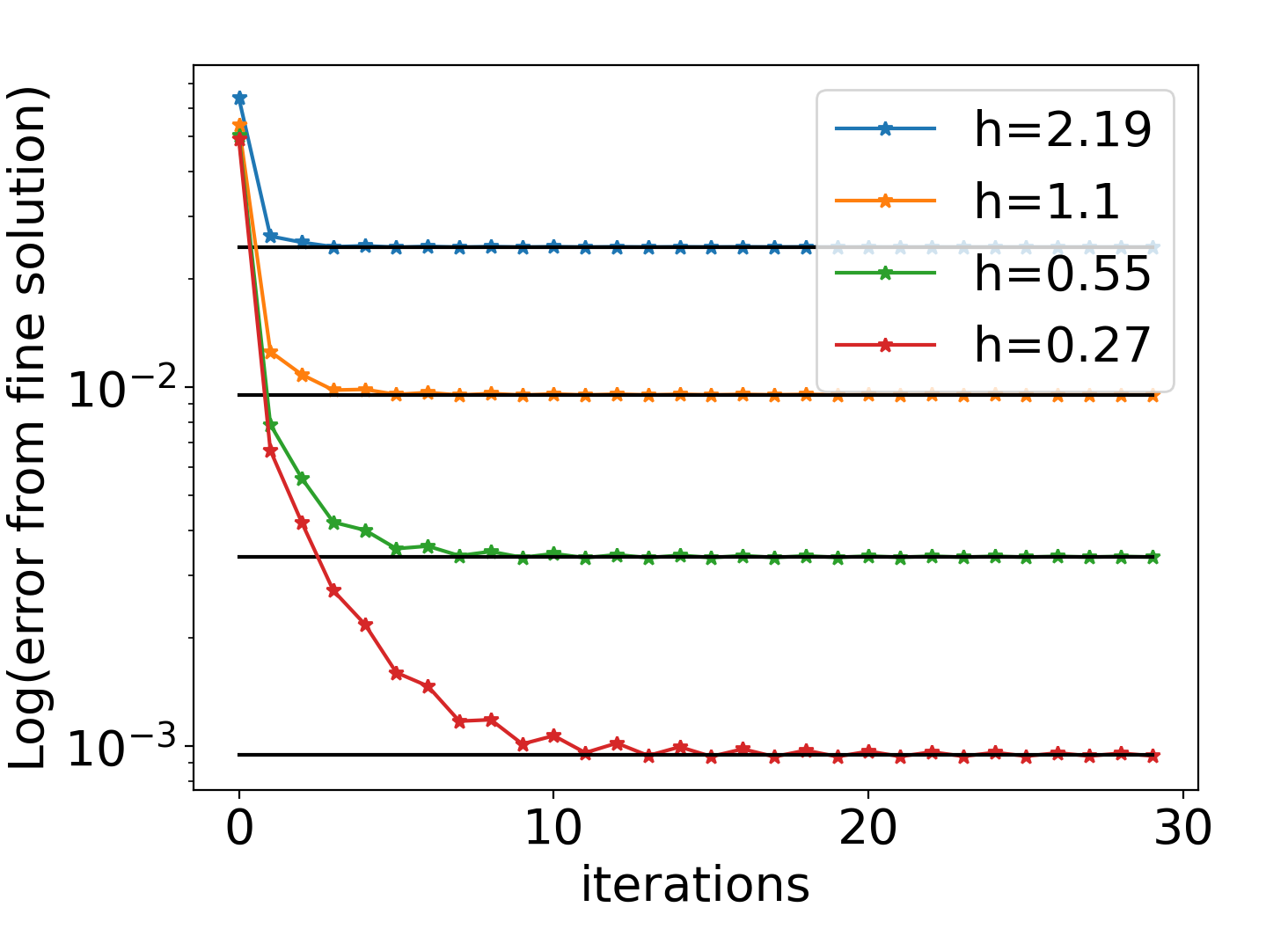}
\end{subfigure}
\caption{Convergence of the two-level iterative method for the urban data set on $8 \times 8$ subdomains with overlap $\smallop$.
	The black horizontal lines show the error of the fine-scale finite element method.  Left to right: algebraic error, full error.
}
\label{fig:iterativeL80withref}
\end{figure}

\begin{figure}
\begin{subfigure}{0.49\textwidth}
	\centering
	\includegraphics[width=\linewidth, height=4cm]{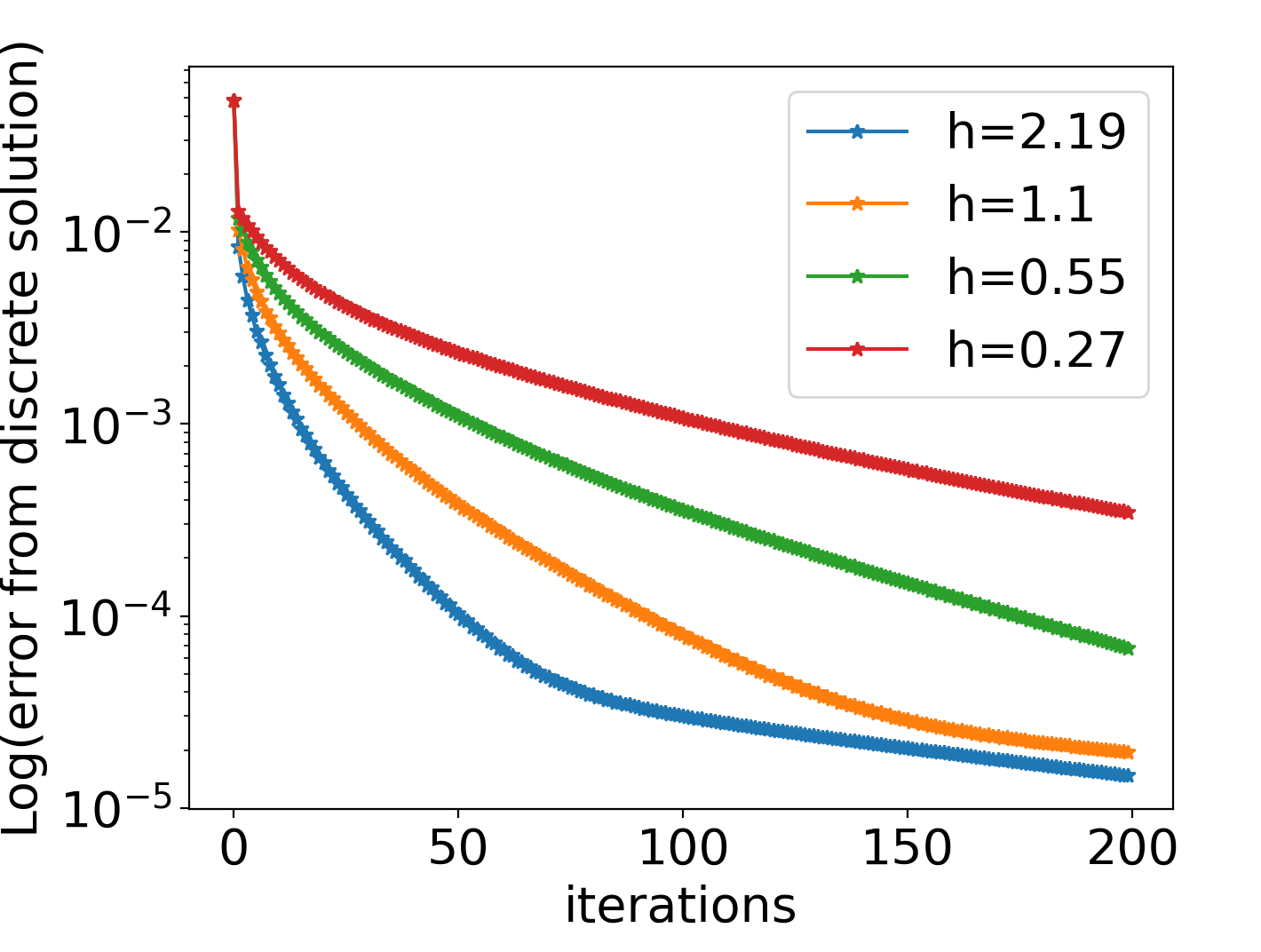}
\end{subfigure}
\begin{subfigure}{0.49\textwidth}
	\centering
	\includegraphics[width=\linewidth, height=4cm]{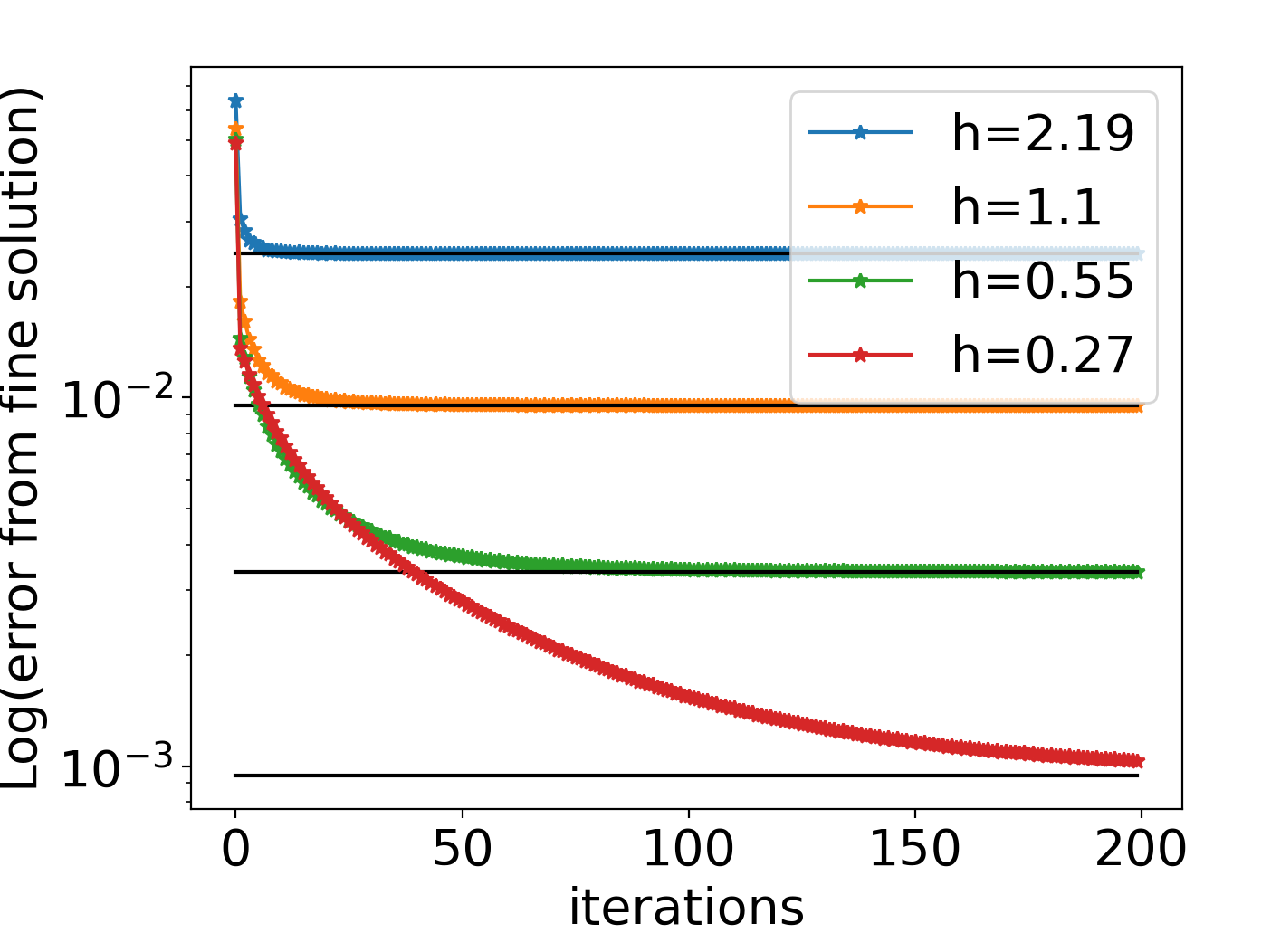}
\end{subfigure}
\caption{Convergence of the two-level iterative method for the urban data set on $8 \times 8$ subdomains with minimal geometric overlap.
	The black horizontal lines show the error of the fine-scale finite element method.  Left to right: algebraic error, full error.
}
\label{fig:iterativeL80withrefratio0}
\end{figure}


Figures \ref{fig:L80kryratio5} and \ref{fig:L80kryratio0} report the performance of the two-level preconditioned GMRES method 
for overlaps of $\smallop$ and minimal geometric overlap, respectively. Compared to the stationary iterative method, a major improvement is achieved for the case of minimal geometric overlap. However, due to the low accuracy of the background finite element discretization, both the iterative and preconditioned GMRES methods perform comparably for the overlap of $\smallop$.



\begin{figure}
\begin{subfigure}{0.49\textwidth}
	\centering
	\includegraphics[width=\linewidth, height=4cm]{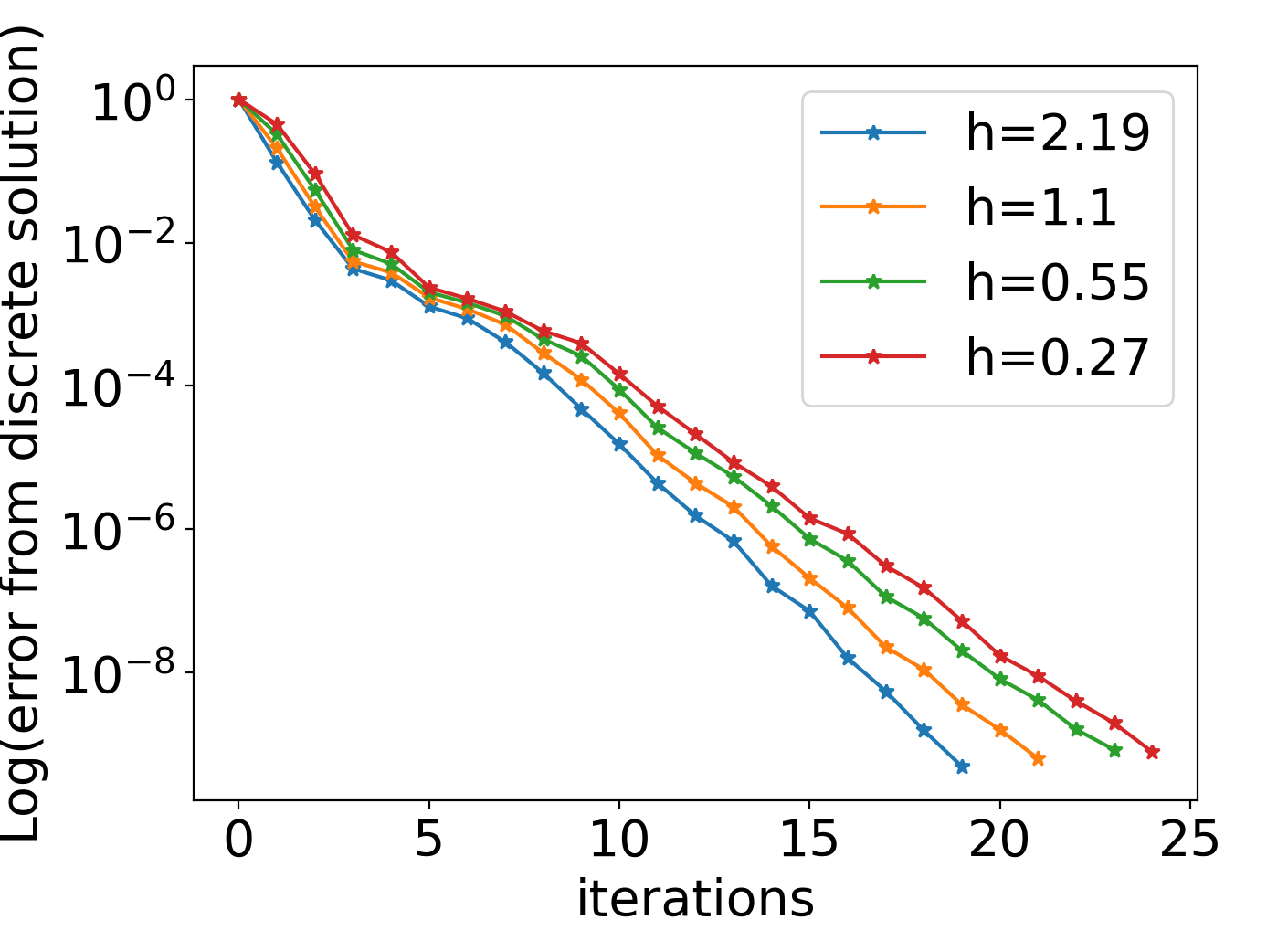}
\end{subfigure}
\begin{subfigure}{0.49\textwidth}
	\centering
	\includegraphics[width=\linewidth, height=4cm]{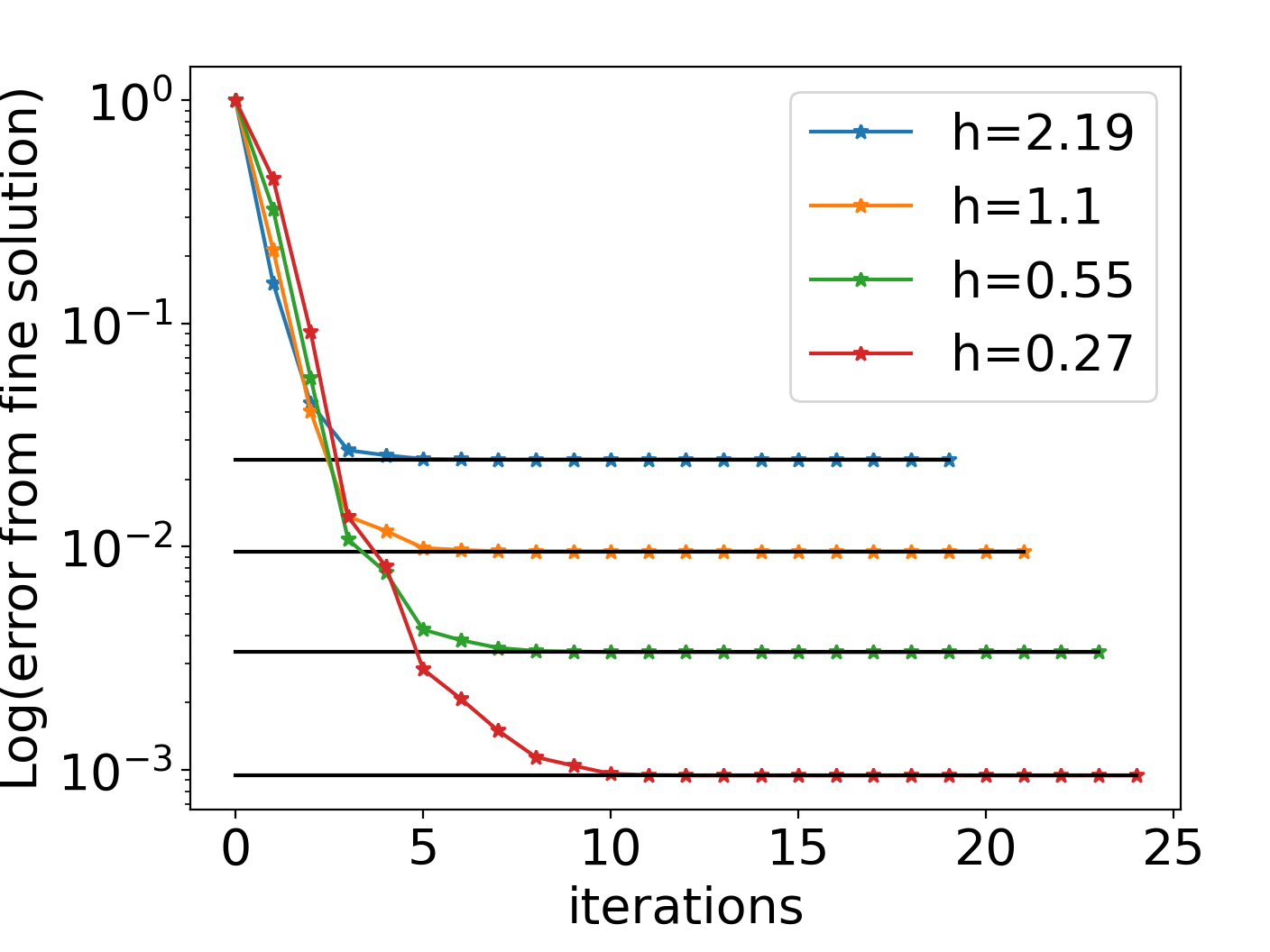}
\end{subfigure}
\caption{Convergence of the two-level preconditioned Krylov method for the urban data set on $8 \times 8$ subdomains with overlap $\smallop$.
	The black horizontal lines show the error of the fine-scale finite element method.  Left to right: algebraic error, full error.
}
\label{fig:L80kryratio5}
\end{figure}

\begin{figure}
\begin{subfigure}{0.49\textwidth}
	\centering
	\includegraphics[width=\linewidth, height=4cm]{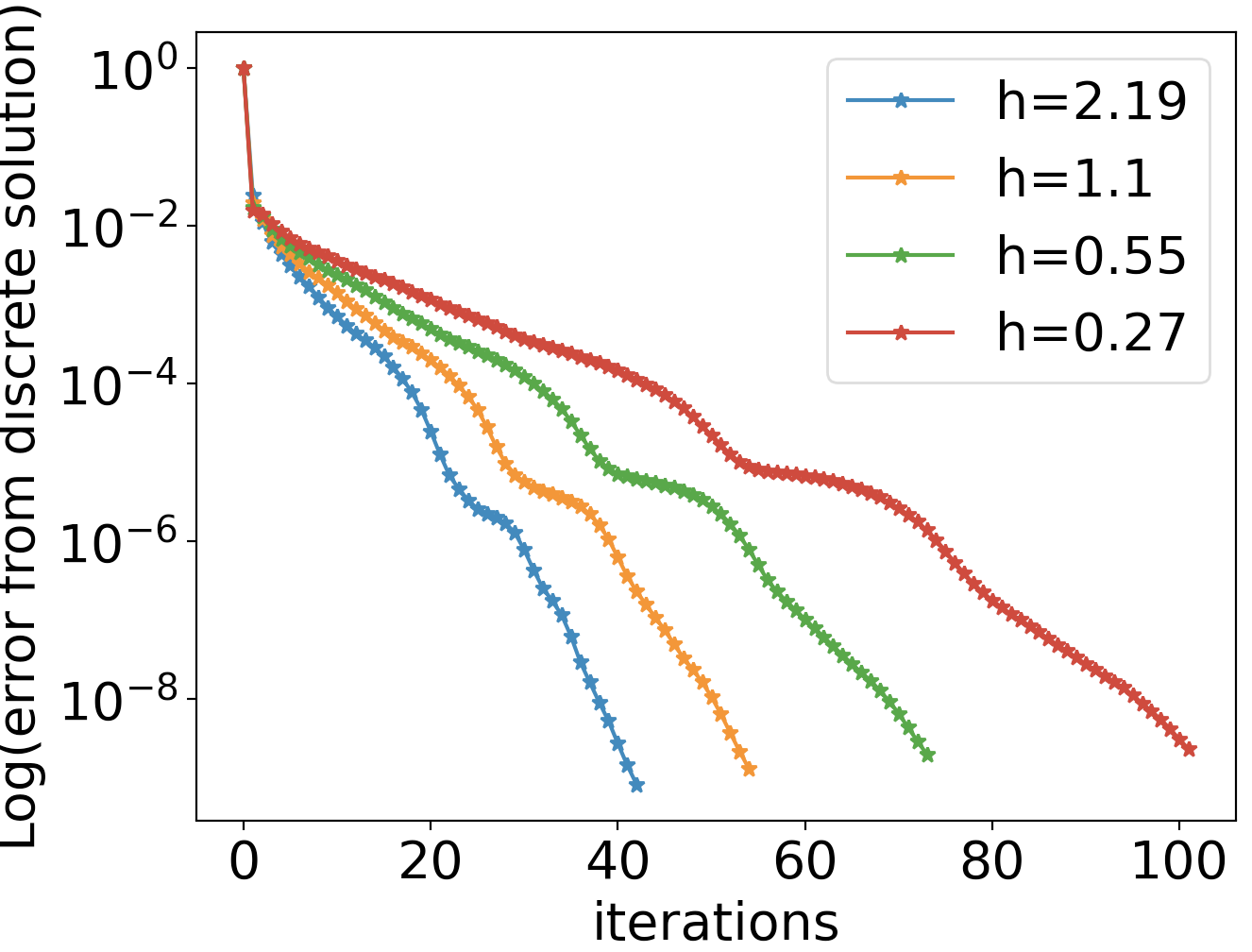}
\end{subfigure}
\begin{subfigure}{0.49\textwidth}
	\centering
	\includegraphics[width=\linewidth, height=4cm]{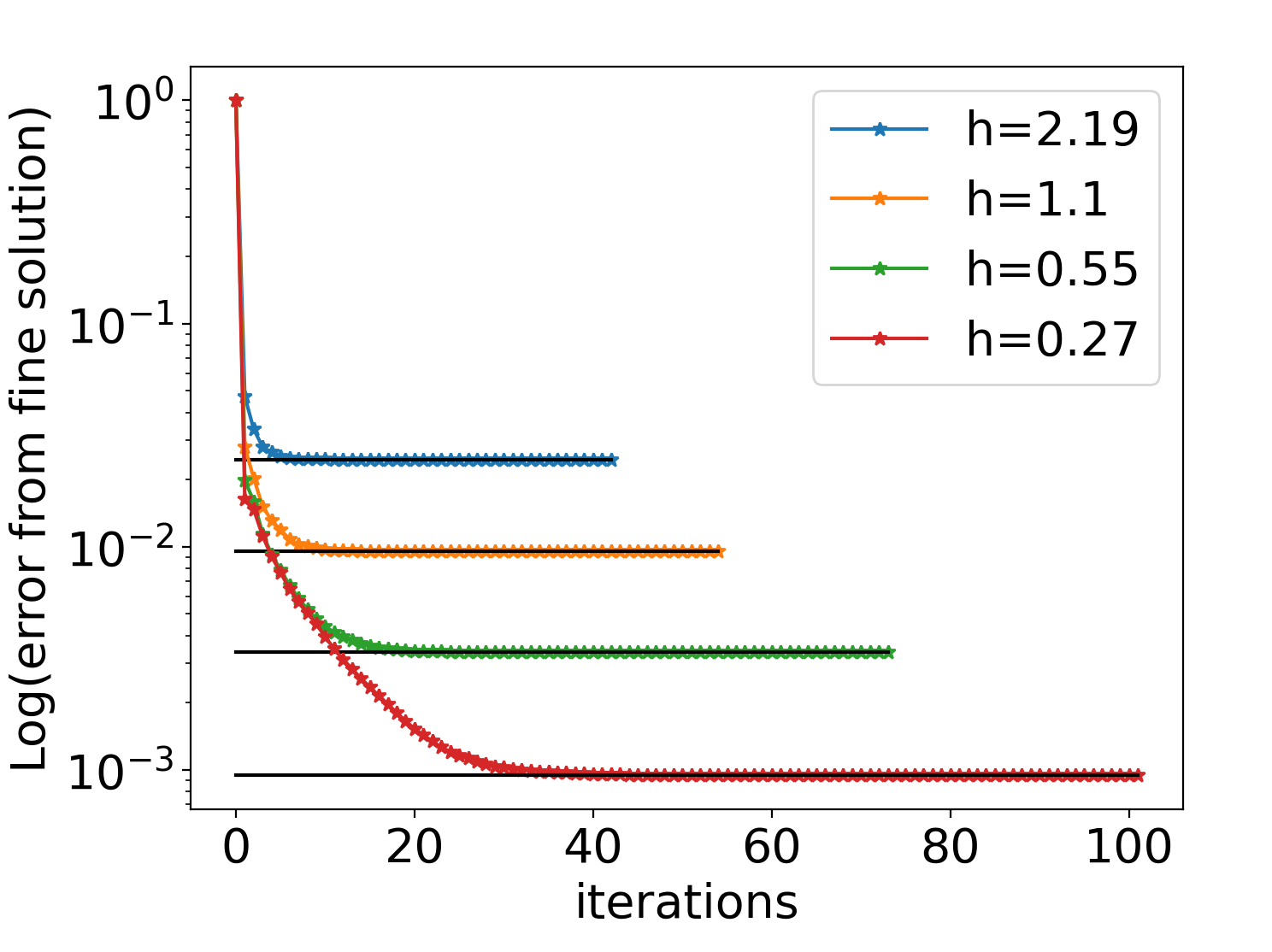}
\end{subfigure}
\caption{Convergence of the two-level preconditioned Krylov method for the urban data set on $8 \times 8$ subdomains with minimal geometric overlap.
	The black horizontal lines show the error of the fine-scale finite element method.  Left to right: algebraic error, full error.
}
\label{fig:L80kryratio0}
\end{figure}

\subsection{Scalability Tests for Preconditioned Krylov Method on Larger Urban Domain}

As a final numerical experiment, we follow the experiment from the previous subsection and report the performance of the two-level preconditioned GMRES method on a larger data set shown on the right of Figure \ref{fig:framesols}. 
This larger data set contains 306 buildings and 477 walls of varying sizes, the dimensions of the domain are $640\times 640$ metres.
\blue{The ``large" model domain would take a minimum of 142\,700 degrees of freedom to triangulate without imposing maximum triangle area or constraining the mesh to match the coarse partitioning.} The minimal triangle area is observed to be $5.83\times 10^{-13}$.

We note that, depending on the partitioning of the domain, the perforations resulting from this data set (especially the wall data) can span across multiple coarse cells,  which is a challenging situation for traditional coarse spaces.
To further explore this observation, we will provide numerical results for data sets that do and do not include walls as perforations. Figure \ref{fig:a} reports the finite element solution obtained for a data set excluding and including walls. We can see from Figure \ref{fig:a} that the inclusion of walls noticeably affects the numerical solution.

\begin{figure}
\centering
\begin{subfigure}{0.49\textwidth}
	\centering
	\includegraphics[width=.9\linewidth]{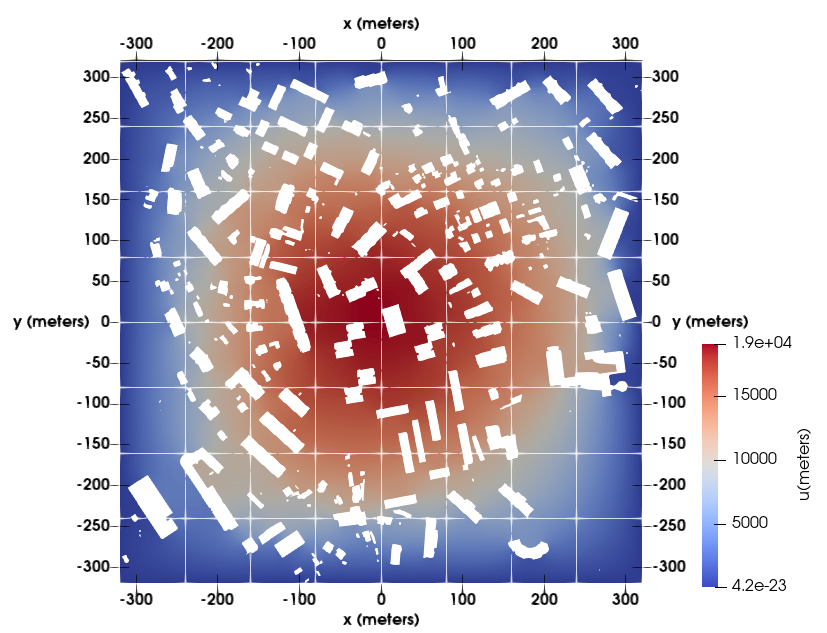}
	\caption{Without walls}
\end{subfigure}
\begin{subfigure}{0.49\textwidth}
	\centering
	\includegraphics[width=.9\linewidth]{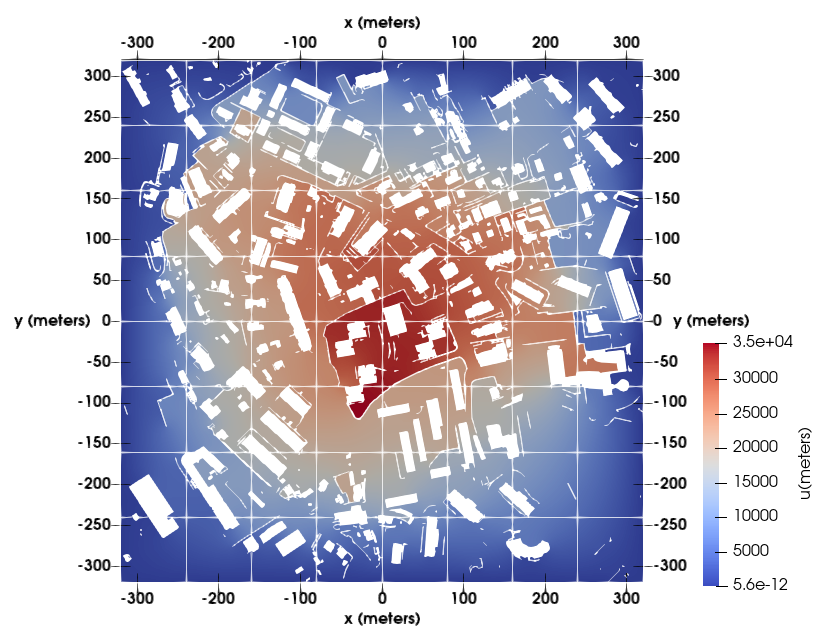}
	\caption{With walls}
\end{subfigure}
\caption{
	Approximate finite element solution over a computational domain divided in $N=8 \times 8$ nonoverlapping subdomains.
}
\label{fig:a}
\end{figure}

For the sake of comparison, we also provide numerical results for the well-known Nicolaides coarse space \cite{nic}, made of flat-top partition of unity functions associated with the overlapping partitioning.  
For the Nicolaides coarse space, scalability with respect to the number of subdomains is only guaranteed if the subdomains are connected.  Therefore, to generate connected subdomains, we further partition $(\O_j')_j$ into a family of connected subdomains. 
In other words, let $m_j$ denote the number of disconnected components for each overlapping subdomain $\O_j'$ and let $\O_{j,l}', l=1, \ldots, m_j$ denote the corresponding disconnected component. Then our new overlapping partitioning contains $m=\sum_{j=1}^{N} m_j$ total subdomains and is given by
$$ (\widehat{\O}_{k})_{k \in \{1, \ldots, m\}} = \bigl( (\O_{j,l}')_{l \in \{1, \ldots, m_j\}} \bigr)_{j \in \{1, \ldots, N\}}. $$
This special partitioning in visualized in Figure \ref{fig:nic}.

\begin{figure}
\centering
\begin{subfigure}{0.495\textwidth}
\centering
\includegraphics[width=.7\linewidth, height=.7\linewidth]{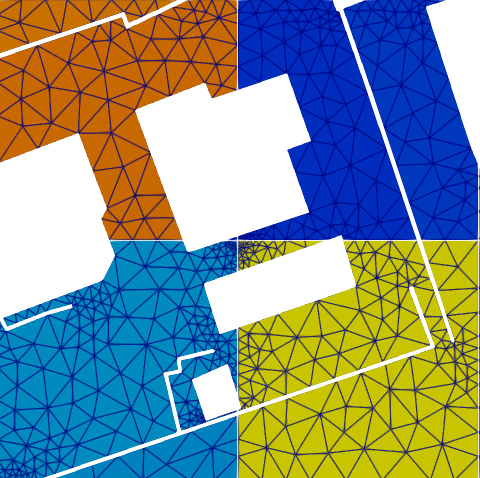}
\end{subfigure}
\begin{subfigure}{0.495\textwidth}
\centering
\includegraphics[width=.7\linewidth, height=.7\linewidth]{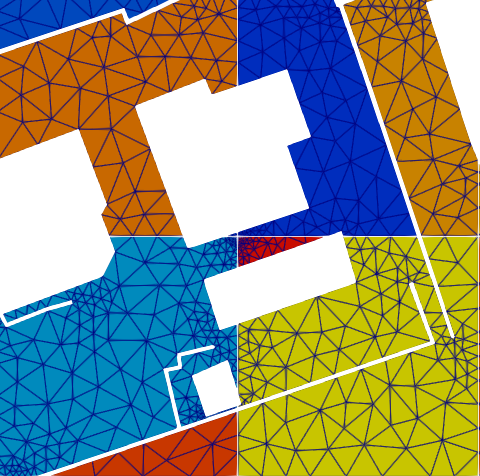}
\end{subfigure}
\caption{Possible partitioning of the domain without (left) and with (right) additional partitioning by disconnected component, with each subdomain/component depicted by a color block.}
\label{fig:nic}
\end{figure}

Then, the Nicolaides coarse space is as follows. The $k$th row of the discrete coarse matrix $\bR_H$ and therefore the $k$th column of $\bR_H^T$ is given by $$(\bR_H^T)_k=  \widehat{\bR}_k^T \widehat{\bD}_k \widehat{\bR}_k\mathbf{1},$$
for $k=1, \ldots, m$, where $\widehat{\bR}_k$ and $\widehat{\bD}_k$ are the restriction and partition of unity matrices corresponding to $\widehat{\O}_{k}$ and $\mathbf{1}$ is a vector full of ones.

Figure \ref{fig:convcurves} reports 
convergence histories of the preconditioned GMRES method using the Nicolaides and Trefftz coarse spaces for the data set including both walls and buildings.
Table \ref{tab:table} summarizes the numerical performance for data sets including or excluding walls.  In particular, for both preconditioners,  it reports 
the dimensions of the coarse spaces, as well as the number of GMRES iterations required to achieve a relative $L^2$ error of $10^{-8}$.

 As we provide numerical results for various numbers of subdomains $N$ and the computational domain $\O$ remains fixed independently of $N$, the results of this experiment should be interpreted in terms of a strong scalability.  However, we wish to stress that the fine-scale triangulation is conforming to the nonoverlapping partitioning $(\O_j)_{j = 1,\ldots, N}$.  Consequentially, the system \eqref{Au_f} changes from one coarse partitioning to another.  Nevertheless we ensure that the dimension of  the system \eqref{Au_f} is roughly constant throughout the experiment for a given $N$.  

The performance of the Trefftz coarse space appears to be very robust with respect to both $N$ and the complexity of the computational domain.  While the Nicolaides space is also fairly robust with respect to $N$ (as expected), the Trefftz space provides an additional acceleration in terms of iteration count. The improvement with respect to the alternative Nicolaides approach is quite striking, particularly in the case of the minimal geometric overlap.
As expected,  increased overlap in the first level of the Schwarz preconditioner provides additional acceleration in terms of iteration count.  However,  for the Trefftz space,  the results with minimal geometric overlap appear to already be quite reasonable. 

The dimensions and the relative dimensions of the two coarse spaces are reported in Table \ref{tab:table}.
Relative dimension refers to the would-be dimension of the coarse space in the case of a domain without perforations
with $\O_S = \emptyset$;  that is, the relative dimensions are computed as $\frac{\text{dim}(\bR_H)}{(\sqrt{N}+1)^2}$ for the Trefftz space and as $\frac{\text{dim}(\bR_H)}{N}$ for the Nicolaides space.  
We observe that the Trefftz space requires a much larger number of degrees of freedom,
which naturally leads to a large coarse system to solve. Therefore, the dimension of the Trefftz space is generally larger than Nicolaides, but we note that the contrast between the dimensions of two spaces reduces as $N$ grows.  In general,  the dimension of the Trefftz coarse space appears reasonable given the geometrical complexity of the computational domain. We also note that the Trefftz space outperforms the Nicolaides space significantly for the domains including walls. The Trefftz space is robust with respect to data complexity, performing similarly with and without the addition of walls in the domain.




\begin{table}
\centering
\caption{ GMRES iterations, dimension, and relative dimension for the Trefftz and Nicolaides coarse spaces.  Results are shown for minimal geometric overlap and $\smallop$.
As the dimension of the Nicolaides space will change with respect to the overlap, its dimension is given as the average dimension over the two overlap values.
}
\setlength{\tabcolsep}{3.8pt}
\begin{tabular}	{|c r |cc|c|cc|c|} 
	\hline
	& & \multicolumn{3}{|c}{Nicolaides}  &  \multicolumn{3}{|c|}{Trefftz}
	\\ 
	\hline
	& &	\multicolumn{2}{|c|}{it.}  &  \multicolumn{1}{|c|}{dim. (rel)}	  &	\multicolumn{2}{|c|}{it.}  &  \multicolumn{1}{|c|}{dim. (rel)}	      \\
	N	&&min. & $\smallop$    &&min &  $\smallop$  & \\
\hline
4& no walls&107&40&18 (1.1)&31&16&144 (5.8)\\
& walls &194&54&39 (2.4)&35&18&283 (11.3)\\
\hline
8& no walls&110&65&74 (1.2)&35&18&328 (4.0)\\
& walls &213&96&150 (2.3)&40&21&658 (8.1)\\
\hline
16& no walls&103&62&300 (1.2)&37&18&797 (2.8)\\
& walls &156&100&504 (2.0)&43&22&1499 (5.2)\\
\hline
32& no walls&94&53&1157 (1.1)&38&18&1997 (1.8)\\
& walls &142&92&1614 (1.6)&41&21&3319 (3.0)\\
\hline
\end{tabular}
\label{tab:table}
\end{table}

\begin{figure}
\centering
\begin{subfigure}{0.495\textwidth}
\centering
\includegraphics[width=\linewidth]{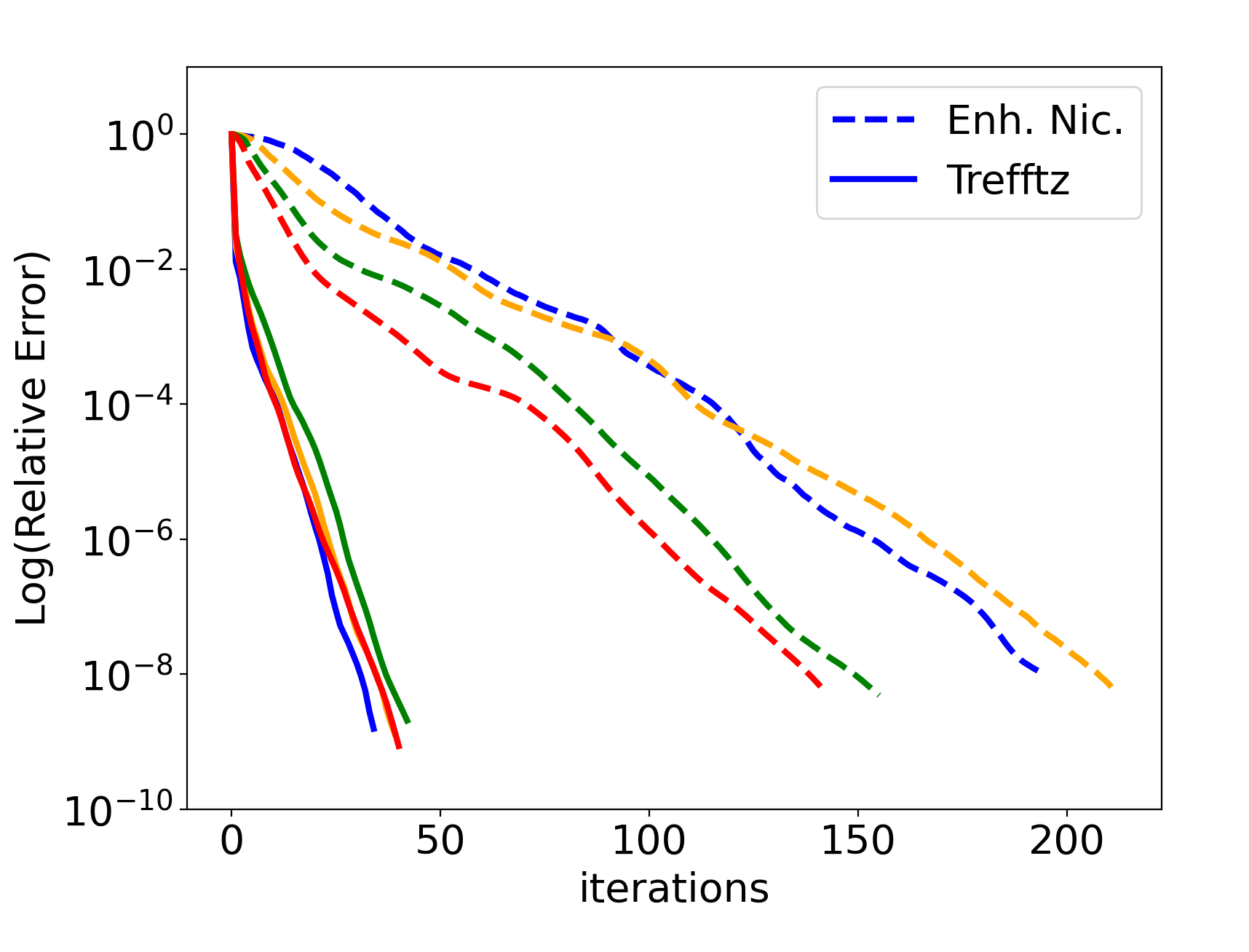}
\caption{Minimal geometric overlap}
\end{subfigure}
\begin{subfigure}{0.495\textwidth}
\centering
\includegraphics[width=\linewidth]{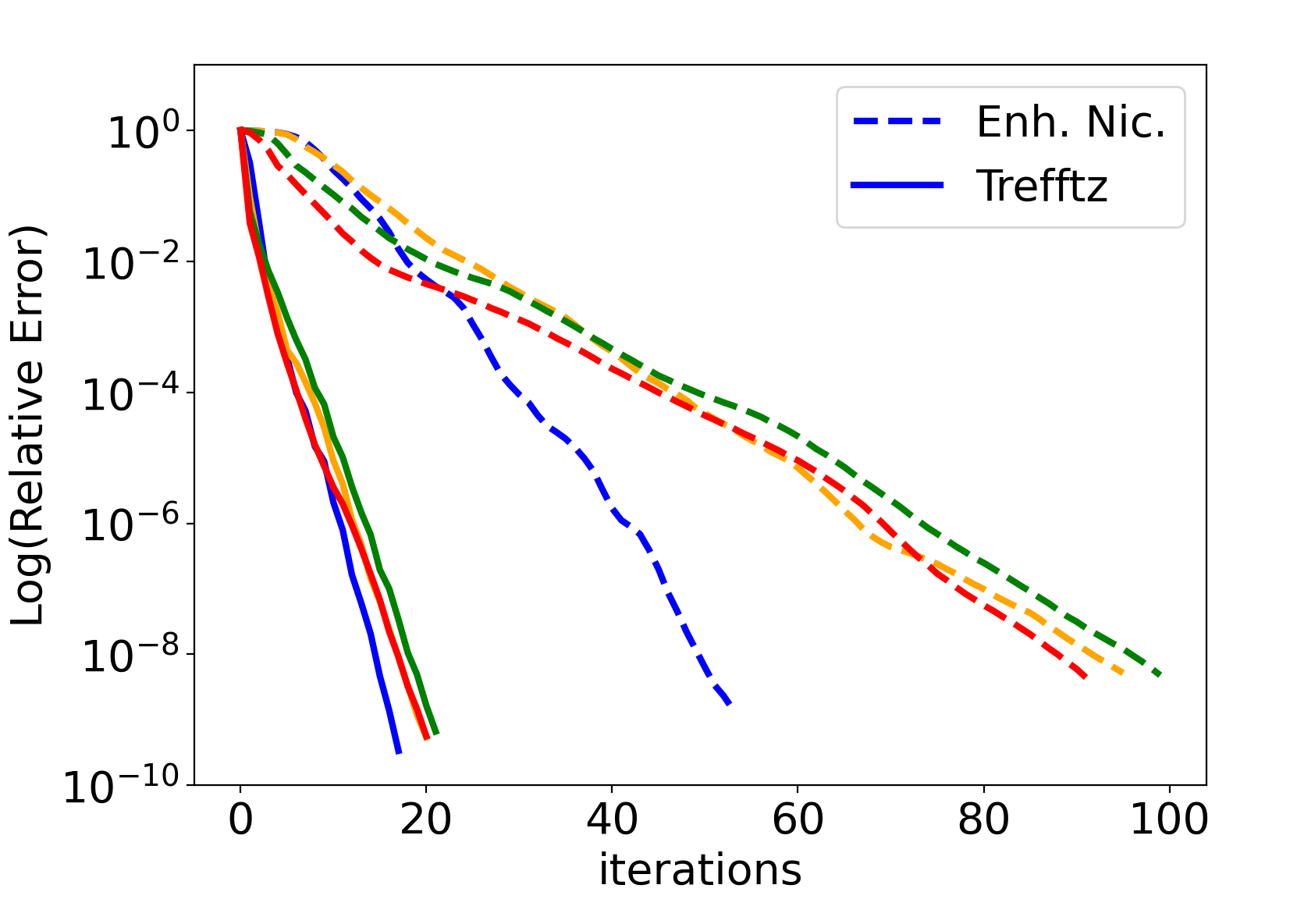}
\caption{Overlap $\smallop$}
\end{subfigure}
\caption{Convergence curves for the Trefftz (solid lines) and Nicolaides (dashed lines) coarse spaces for the larger data set involving both buildings and walls and two overlap sizes. Colors correspond to the number of subdomains as follows: $N=16$ (blue), $N=64$ (orange), $N=256$ (green), $N=1024$ (red).}
\label{fig:convcurves}
\end{figure}

\section{Conclusions}\label{sec:conc}
We presented a theoretical and numerical study of 
the Trefftz coarse space for the Poisson problem posed in domains that include a large number perforations, such as those encountered in the field of urban hydraulics. The main theoretical contribution concerns the error estimate regarding the $H^1$-projection over the coarse space. The error analysis does not rely on global regularity of the solution and is performed under some very minimal assumptions regarding the geometry of the domain. In accordance to the presented error analysis, for a specific edge refinement procedure, the coarse approximation achieves superconvergence. This superconvergence is observed in the numerical experiment involving the L-shaped domain. Combined with RAS, the Trefftz space leads to an efficient and robust iterative solver or preconditioner for linear systems resulting from fine-scale finite element discretizations. The performance of the two-level RAS method is demonstrated through numerical experiments involving realistic urban geometries. We observe that, for finite element discretizations with moderate accuracy, the two-level RAS method reaches the precision of the fine-scale discretization in a few iterations.
Used in combination with domain decomposition methods as a preconditioner for Krylov methods, the coarse space provides significant acceleration in terms of Krylov iteration counts when compared to a more standard Nicolaides coarse space.  This improvement comes at the price of a somewhat larger coarse problem with a larger dimension.
\red{
Future research directions involve extending the presented two-level preconditioning strategy to nonlinear PDEs that model free-surface flows. In the case of nonlinear problems, the coarse space can be used both as a coarse space for the linearized Newton system or as a component of a two-level nonlinear preconditioning strategy.}


\section*{Acknowledgments}

This work has been supported by ANR Project Top-up (ANR-20-CE46-0005).  
The high-resolution structural data has been provided by Métropole
Nice Côte d'Azur.  We warmly thank Florient Largeron,  chief of MNCA's
SIG 3D project,  for his help in preparation of the data and for the multiple fruitful
discussions.

\bibliography{biblioabbrevFV}

\begin{thebibliography}{48}
\expandafter\ifx\csname natexlab\endcsname\relax\def\natexlab#1{#1}\fi
\providecommand{\url}[1]{\texttt{#1}}
\providecommand{\href}[2]{#2}
\providecommand{\path}[1]{#1}
\providecommand{\DOIprefix}{doi:}
\providecommand{\ArXivprefix}{arXiv:}
\providecommand{\URLprefix}{URL: }
\providecommand{\Pubmedprefix}{pmid:}
\providecommand{\doi}[1]{\href{http://dx.doi.org/#1}{\path{#1}}}
\providecommand{\Pubmed}[1]{\href{pmid:#1}{\path{#1}}}
\providecommand{\bibinfo}[2]{#2}
\ifx\xfnm\relax \def\xfnm[#1]{\unskip,\space#1}\fi
\bibitem[{Yu and Lane(2006{\natexlab{a}})}]{YuLane1}
\bibinfo{author}{D.~Yu}, \bibinfo{author}{S.~N. Lane},
\newblock \bibinfo{title}{Urban fluvial flood modelling using a two-dimensional
  diffusion-wave treatment, part 1: mesh resolution effects},
\newblock \bibinfo{journal}{Hydrological Processes: An International Journal}
  \bibinfo{volume}{20} (\bibinfo{year}{2006}{\natexlab{a}})
  \bibinfo{pages}{1541--1565}.
\bibitem[{Yu and Lane(2006{\natexlab{b}})}]{YuLane2}
\bibinfo{author}{D.~Yu}, \bibinfo{author}{S.~N. Lane},
\newblock \bibinfo{title}{Urban fluvial flood modelling using a two-dimensional
  diffusion-wave treatment, part 2: development of a sub-grid-scale treatment},
\newblock \bibinfo{journal}{Hydrological Processes: An International Journal}
  \bibinfo{volume}{20} (\bibinfo{year}{2006}{\natexlab{b}})
  \bibinfo{pages}{1567--1583}.
\bibitem[{Abily et~al.(2016)Abily, Bertrand, Delestre, Gourbesville, and
  Duluc}]{Abily}
\bibinfo{author}{M.~Abily}, \bibinfo{author}{N.~Bertrand},
  \bibinfo{author}{O.~Delestre}, \bibinfo{author}{P.~Gourbesville},
  \bibinfo{author}{C.-M. Duluc},
\newblock \bibinfo{title}{Spatial global sensitivity analysis of high
  resolution classified topographic data use in 2d urban flood modelling},
\newblock \bibinfo{journal}{Environmental Modelling \& Software}
  \bibinfo{volume}{77} (\bibinfo{year}{2016}) \bibinfo{pages}{183--195}.
\bibitem[{Andres(2012)}]{buildings}
\bibinfo{author}{L.~Andres},
\newblock \bibinfo{title}{L’apport de la donn{\'e}e topographique pour la
  mod{\'e}lisation 3{D} fine et classifi{\'e}e d’un territoire},
\newblock \bibinfo{journal}{Rev. XYZ} \bibinfo{volume}{133}
  (\bibinfo{year}{2012}) \bibinfo{pages}{24--30}.
\bibitem[{Alonso et~al.(2008)Alonso, Santillana, and Dawson}]{diffwave}
\bibinfo{author}{R.~Alonso}, \bibinfo{author}{M.~Santillana},
  \bibinfo{author}{C.~Dawson},
\newblock \bibinfo{title}{On the diffusive wave approximation of the shallow
  water equations},
\newblock \bibinfo{journal}{European Journal of Applied Mathematics}
  \bibinfo{volume}{19} (\bibinfo{year}{2008}) \bibinfo{pages}{575--606}.
\bibitem[{Casulli(1990)}]{lowfroude}
\bibinfo{author}{V.~Casulli},
\newblock \bibinfo{title}{Semi-implicit finite difference methods for the
  two-dimensional shallow water equations},
\newblock \bibinfo{journal}{Journal of Computational Physics}
  \bibinfo{volume}{86} (\bibinfo{year}{1990}) \bibinfo{pages}{56--74}.
\bibitem[{M{\aa}lqvist and Peterseim(2014)}]{lod}
\bibinfo{author}{A.~M{\aa}lqvist}, \bibinfo{author}{D.~Peterseim},
\newblock \bibinfo{title}{Localization of elliptic multiscale problems},
\newblock \bibinfo{journal}{Mathematics of Computation} \bibinfo{volume}{83}
  (\bibinfo{year}{2014}) \bibinfo{pages}{2583--2603}.
\bibitem[{Weinan and Engquist(2003)}]{hmm}
\bibinfo{author}{E.~Weinan}, \bibinfo{author}{B.~Engquist},
\newblock \bibinfo{title}{The heterogeneous multiscale methods},
\newblock \bibinfo{journal}{Communications in Mathematical Sciences}
  \bibinfo{volume}{1} (\bibinfo{year}{2003}) \bibinfo{pages}{87--132}.
\bibitem[{Babuska et~al.(1994)Babuska, Caloz, and Osborn}]{gfem}
\bibinfo{author}{I.~Babuska}, \bibinfo{author}{G.~Caloz},
  \bibinfo{author}{J.~E. Osborn},
\newblock \bibinfo{title}{Special finite element methods for a class of second
  order elliptic problems with rough coefficients},
\newblock \bibinfo{journal}{SIAM Journal on Numerical Analysis}
  \bibinfo{volume}{31} (\bibinfo{year}{1994}) \bibinfo{pages}{945--981}.
\bibitem[{Babuska and Lipton(2011)}]{msgfem}
\bibinfo{author}{I.~Babuska}, \bibinfo{author}{R.~Lipton},
\newblock \bibinfo{title}{Optimal local approximation spaces for generalized
  finite element methods with application to multiscale problems},
\newblock \bibinfo{journal}{Multiscale Modeling and Simulation}
  \bibinfo{volume}{9} (\bibinfo{year}{2011}) \bibinfo{pages}{373--406}.
\bibitem[{Ma et~al.(2022)Ma, Scheichl, and Dodwell}]{msgfem2}
\bibinfo{author}{C.~Ma}, \bibinfo{author}{R.~Scheichl},
  \bibinfo{author}{T.~Dodwell},
\newblock \bibinfo{title}{Novel design and analysis of generalized finite
  element methods based on locally optimal spectral approximations},
\newblock \bibinfo{journal}{SIAM Journal on Numerical Analysis}
  \bibinfo{volume}{60} (\bibinfo{year}{2022}) \bibinfo{pages}{244--273}.
\bibitem[{Araya et~al.(2013)Araya, Harder, Paredes, and Valentin}]{mhm}
\bibinfo{author}{R.~Araya}, \bibinfo{author}{C.~Harder},
  \bibinfo{author}{D.~Paredes}, \bibinfo{author}{F.~Valentin},
\newblock \bibinfo{title}{Multiscale hybrid-mixed method},
\newblock \bibinfo{journal}{SIAM Journal on Numerical Analysis}
  \bibinfo{volume}{51} (\bibinfo{year}{2013}) \bibinfo{pages}{3505--3531}.
\bibitem[{Hou and Wu(1997)}]{origmsfem}
\bibinfo{author}{T.~Y. Hou}, \bibinfo{author}{X.-H. Wu},
\newblock \bibinfo{title}{A multiscale finite element method for elliptic
  problems in composite materials and porous media},
\newblock \bibinfo{journal}{Journal of Computational Physics}
  \bibinfo{volume}{134} (\bibinfo{year}{1997}) \bibinfo{pages}{169--189}.
\bibitem[{Brezzi et~al.(2014)Brezzi, Falk, and Marini}]{vem}
\bibinfo{author}{F.~Brezzi}, \bibinfo{author}{R.~S. Falk},
  \bibinfo{author}{L.~D. Marini},
\newblock \bibinfo{title}{Basic principles of mixed virtual element methods},
\newblock \bibinfo{journal}{ESAIM: Math. Model. Numer. Anal.}
  \bibinfo{volume}{4} (\bibinfo{year}{2014}) \bibinfo{pages}{1227--1240}.
\bibitem[{Efendiev et~al.(2013)Efendiev, Galvis, and Hou}]{gmsfem}
\bibinfo{author}{Y.~Efendiev}, \bibinfo{author}{J.~Galvis},
  \bibinfo{author}{T.~Y. Hou},
\newblock \bibinfo{title}{Generalized multiscale finite element methods
  (gmsfem)},
\newblock \bibinfo{journal}{Journal of computational physics}
  \bibinfo{volume}{251} (\bibinfo{year}{2013}) \bibinfo{pages}{116--135}.
\bibitem[{Hajibeygi et~al.(2008)Hajibeygi, Bonfigli, Hesse, and Jenny}]{imsfvm}
\bibinfo{author}{H.~Hajibeygi}, \bibinfo{author}{G.~Bonfigli},
  \bibinfo{author}{M.~A. Hesse}, \bibinfo{author}{P.~Jenny},
\newblock \bibinfo{title}{Iterative multiscale finite-volume method},
\newblock \bibinfo{journal}{Journal of Computational Physics}
  \bibinfo{volume}{19} (\bibinfo{year}{2008}) \bibinfo{pages}{8604--8621}.
\bibitem[{Le~Bris et~al.(2014)Le~Bris, Legoll, and Lozinski}]{legoll}
\bibinfo{author}{C.~Le~Bris}, \bibinfo{author}{F.~Legoll},
  \bibinfo{author}{A.~Lozinski},
\newblock \bibinfo{title}{An {M}s{FEM} type approach for perforated domains},
\newblock \bibinfo{journal}{Multiscale Modeling \& Simulation}
  \bibinfo{volume}{12} (\bibinfo{year}{2014}) \bibinfo{pages}{1046--1077}.
\bibitem[{Degond et~al.(2015)Degond, Lozinski, Muljadi, and
  Narski}]{msfembubble}
\bibinfo{author}{P.~Degond}, \bibinfo{author}{A.~Lozinski},
  \bibinfo{author}{B.~P. Muljadi}, \bibinfo{author}{J.~Narski},
\newblock \bibinfo{title}{Crouzeix-raviart msfem with bubble functions for
  diffusion and advection-diffusion in perforated media},
\newblock \bibinfo{journal}{Communications in Computational Physics}
  \bibinfo{volume}{17} (\bibinfo{year}{2015}) \bibinfo{pages}{887--907}.
\bibitem[{Bris et~al.(2019)Bris, Legoll, and Madiot}]{legoll2}
\bibinfo{author}{C.~L. Bris}, \bibinfo{author}{F.~Legoll},
  \bibinfo{author}{F.~Madiot},
\newblock \bibinfo{title}{Multiscale finite element methods for
  advection-dominated problems in perforated domains},
\newblock \bibinfo{journal}{Multiscale Modeling \& Simulation}
  \bibinfo{volume}{17} (\bibinfo{year}{2019}) \bibinfo{pages}{773--825}.
\bibitem[{Brown and Taralova(2016)}]{taralova}
\bibinfo{author}{D.~L. Brown}, \bibinfo{author}{V.~Taralova},
\newblock \bibinfo{title}{A multiscale finite element method for {N}eumann
  problems in porous microstructures},
\newblock \bibinfo{journal}{Discrete \& Continuous Dynamical Systems-S}
  \bibinfo{volume}{9} (\bibinfo{year}{2016}) \bibinfo{pages}{1299}.
\bibitem[{Henning and Ohlberger(2009)}]{hmmperf}
\bibinfo{author}{P.~Henning}, \bibinfo{author}{M.~Ohlberger},
\newblock \bibinfo{title}{The heterogeneous multiscale finite element method
  for elliptic homogenization problems in perforated domains},
\newblock \bibinfo{journal}{Numerische Mathematik} \bibinfo{volume}{113}
  (\bibinfo{year}{2009}) \bibinfo{pages}{601--629}.
\bibitem[{Chung et~al.(2016{\natexlab{a}})Chung, Efendiev, Li, and
  Vasilyeva}]{gmsfemperf}
\bibinfo{author}{E.~T. Chung}, \bibinfo{author}{Y.~Efendiev},
  \bibinfo{author}{G.~Li}, \bibinfo{author}{M.~Vasilyeva},
\newblock \bibinfo{title}{Generalized multiscale finite element methods for
  problems in perforated heterogeneous domains},
\newblock \bibinfo{journal}{Applicable Analysis} \bibinfo{volume}{95}
  (\bibinfo{year}{2016}{\natexlab{a}}) \bibinfo{pages}{2254--2279}.
\bibitem[{Chung et~al.(2016{\natexlab{b}})Chung, Leung, and
  Vasilyeva}]{gmsfemperf2}
\bibinfo{author}{E.~T. Chung}, \bibinfo{author}{W.~T. Leung},
  \bibinfo{author}{M.~Vasilyeva},
\newblock \bibinfo{title}{Mixed gmsfem for second order elliptic problem in
  perforated domains},
\newblock \bibinfo{journal}{Journal of Computational and Applied Mathematics}
  \bibinfo{volume}{304} (\bibinfo{year}{2016}{\natexlab{b}})
  \bibinfo{pages}{84--99}.
\bibitem[{Spiridonov et~al.(2019)Spiridonov, Vasilyeva, and Leung}]{gmsfem3}
\bibinfo{author}{D.~Spiridonov}, \bibinfo{author}{M.~Vasilyeva},
  \bibinfo{author}{W.~T. Leung},
\newblock \bibinfo{title}{A generalized multiscale finite element method
  (gmsfem) for perforated domain flows with robin boundary conditions},
\newblock \bibinfo{journal}{Journal of Computational and Applied Mathematics}
  \bibinfo{volume}{357} (\bibinfo{year}{2019}) \bibinfo{pages}{319--328}.
\bibitem[{Dryja et~al.(1994)Dryja, Smith, and Widlund}]{analysis}
\bibinfo{author}{M.~Dryja}, \bibinfo{author}{B.~F. Smith},
  \bibinfo{author}{O.~B. Widlund},
\newblock \bibinfo{title}{Schwarz analysis of iterative substructuring
  algorithms for elliptic problems in three dimensions},
\newblock \bibinfo{journal}{SIAM Journal on Numerical Analysis}
  \bibinfo{volume}{31} (\bibinfo{year}{1994}) \bibinfo{pages}{1662--1694}.
\bibitem[{Dryja et~al.(1996)Dryja, Sarkis, and Widlund}]{multileveldiscon}
\bibinfo{author}{M.~Dryja}, \bibinfo{author}{M.~V. Sarkis},
  \bibinfo{author}{O.~B. Widlund},
\newblock \bibinfo{title}{Multilevel schwarz methods for elliptic problems with
  discontinuous coefficients in three dimensions},
\newblock \bibinfo{journal}{Numerische Mathematik} \bibinfo{volume}{72}
  (\bibinfo{year}{1996}) \bibinfo{pages}{313--348}.
\bibitem[{Mandel and Brezina(1996)}]{balancing}
\bibinfo{author}{J.~Mandel}, \bibinfo{author}{M.~Brezina},
\newblock \bibinfo{title}{Balancing domain decomposition for problems with
  large jumps in coefficients},
\newblock \bibinfo{journal}{Mathematics of Computation} \bibinfo{volume}{65}
  (\bibinfo{year}{1996}) \bibinfo{pages}{1387--1401}.
\bibitem[{Aarnes and Hou(2002)}]{multiscalehighaspectratio}
\bibinfo{author}{J.~Aarnes}, \bibinfo{author}{T.~Y. Hou},
\newblock \bibinfo{title}{Multiscale domain decomposition methods for elliptic
  problems with high aspect ratios},
\newblock \bibinfo{journal}{Acta Mathematicae Applicatae Sinica}
  \bibinfo{volume}{18} (\bibinfo{year}{2002}) \bibinfo{pages}{63--76}.
\bibitem[{Graham et~al.(2007)Graham, Lechner, and Scheichl}]{multiscalepdes}
\bibinfo{author}{I.~G. Graham}, \bibinfo{author}{P.~Lechner},
  \bibinfo{author}{R.~Scheichl},
\newblock \bibinfo{title}{Domain decomposition for multiscale {PDE}s},
\newblock \bibinfo{journal}{Numerische Mathematik} \bibinfo{volume}{106}
  (\bibinfo{year}{2007}) \bibinfo{pages}{589--626}.
\bibitem[{Galvis and Efendiev(2010)}]{eigenproborig}
\bibinfo{author}{J.~Galvis}, \bibinfo{author}{Y.~Efendiev},
\newblock \bibinfo{title}{Domain decomposition preconditioners for multiscale
  flows in high-contrast media},
\newblock \bibinfo{journal}{Multiscale Modeling and Simulation}
  \bibinfo{volume}{8} (\bibinfo{year}{2010}) \bibinfo{pages}{1461--1483}.
\bibitem[{Nataf et~al.(2011)Nataf, Xiang, Dolean, and Spillane}]{dtn}
\bibinfo{author}{F.~Nataf}, \bibinfo{author}{H.~Xiang},
  \bibinfo{author}{V.~Dolean}, \bibinfo{author}{N.~Spillane},
\newblock \bibinfo{title}{A coarse space construction based on local
  {D}irichlet-to-{N}eumann maps},
\newblock \bibinfo{journal}{SIAM Journal on Scientific Computing}
  \bibinfo{volume}{33} (\bibinfo{year}{2011}) \bibinfo{pages}{1623--1642}.
\bibitem[{Dolean et~al.(2012)Dolean, Nataf, Scheichl, and
  Spillane}]{2012spillane}
\bibinfo{author}{V.~Dolean}, \bibinfo{author}{F.~Nataf},
  \bibinfo{author}{R.~Scheichl}, \bibinfo{author}{N.~Spillane},
\newblock \bibinfo{title}{Analysis of a two-level schwarz method with coarse
  spaces based on local dirichlet-to-neumann maps},
\newblock \bibinfo{journal}{Computational Methods in Applied Mathematics}
  \bibinfo{volume}{12} (\bibinfo{year}{2012}) \bibinfo{pages}{391--414}.
\bibitem[{Spillane et~al.(2014)Spillane, Dolean, Hauret, Nataf, Pechstein, and
  Scheichl}]{geneo}
\bibinfo{author}{N.~Spillane}, \bibinfo{author}{V.~Dolean},
  \bibinfo{author}{P.~Hauret}, \bibinfo{author}{F.~Nataf},
  \bibinfo{author}{C.~Pechstein}, \bibinfo{author}{R.~Scheichl},
\newblock \bibinfo{title}{Abstract robust coarse spaces for systems of {PDE}s
  via generalized eigenproblems in the overlaps},
\newblock \bibinfo{journal}{Numerische Mathematik} \bibinfo{volume}{126}
  (\bibinfo{year}{2014}) \bibinfo{pages}{741--770}.
\bibitem[{Scheichl et~al.(2012)Scheichl, Vassilevski, and
  Zikatanov}]{quasimonotone}
\bibinfo{author}{R.~Scheichl}, \bibinfo{author}{P.~S. Vassilevski},
  \bibinfo{author}{L.~T. Zikatanov},
\newblock \bibinfo{title}{Multilevel methods for elliptic problems with highly
  varying coefficients on nonaligned coarse grids},
\newblock \bibinfo{journal}{SIAM Journal on Numerical Analysis}
  \bibinfo{volume}{50} (\bibinfo{year}{2012}) \bibinfo{pages}{1675--1694}.
\bibitem[{Gander et~al.(2015)Gander, Loneland, and Rahman}]{shemorig}
\bibinfo{author}{M.~J. Gander}, \bibinfo{author}{A.~Loneland},
  \bibinfo{author}{T.~Rahman},
\newblock \bibinfo{title}{Analysis of a new harmonically enriched multiscale
  coarse space for domain decomposition methods},
\newblock \bibinfo{journal}{arXiv preprint arXiv:1512.05285}
  (\bibinfo{year}{2015}).
\bibitem[{Dohrmann et~al.(2008)Dohrmann, Klawonn, and Widlund}]{gdsw}
\bibinfo{author}{C.~R. Dohrmann}, \bibinfo{author}{A.~Klawonn},
  \bibinfo{author}{O.~B. Widlund},
\newblock \bibinfo{title}{Domain decomposition for less regular subdomains:
  Overlapping {S}chwarz in two dimensions},
\newblock \bibinfo{journal}{SIAM Journal on Numerical Analysis}
  \bibinfo{volume}{46} (\bibinfo{year}{2008}) \bibinfo{pages}{2153--2168}.
\bibitem[{Heinlein et~al.(2018)Heinlein, Klawonn, Knepper, and
  Rheinbach}]{adaptivegdsw}
\bibinfo{author}{A.~Heinlein}, \bibinfo{author}{A.~Klawonn},
  \bibinfo{author}{J.~Knepper}, \bibinfo{author}{O.~Rheinbach},
\newblock \bibinfo{title}{An adaptive {GDSW} coarse space for two-level
  overlapping {S}chwarz methods in two dimensions},
\newblock in: \bibinfo{booktitle}{Domain Decomposition Methods in Science and
  Engineering XXIV 24}, \bibinfo{year}{2018}, pp. \bibinfo{pages}{373--382}.
\bibitem[{Boutilier et~al.(2022)Boutilier, Brenner, and Dolean}]{dd27}
\bibinfo{author}{M.~Boutilier}, \bibinfo{author}{K.~Brenner},
  \bibinfo{author}{V.~Dolean},
\newblock \bibinfo{title}{A {T}refftz-like coarse space for the two-level
  {S}chwarz method on perforated domains},
\newblock \bibinfo{journal}{arXiv preprint arXiv:2211.05880}
  (\bibinfo{year}{2022}).
\bibitem[{Boutilier et~al.(2023)Boutilier, Brenner, and Dolean}]{fvca}
\bibinfo{author}{M.~Boutilier}, \bibinfo{author}{K.~Brenner},
  \bibinfo{author}{V.~Dolean},
\newblock \bibinfo{title}{Trefftz approximation space for poisson equation in
  perforated domains},
\newblock \bibinfo{journal}{Finite Volumes for Complex Applications X—-Volume
  1, Elliptic and Parabolic Problems}  (\bibinfo{year}{2023})
  \bibinfo{pages}{195--203}.
\bibitem[{Rudin et~al.(1973)}]{func}
\bibinfo{author}{W.~Rudin}, et~al., \bibinfo{title}{Functional analysis},
  \bibinfo{publisher}{New York: McGraw-Hill,}, \bibinfo{year}{1973}.
\bibitem[{Ern et~al.(2004)Ern, Guermond, Ern, and Guermond}]{feinterp}
\bibinfo{author}{A.~Ern}, \bibinfo{author}{J.-L. Guermond},
  \bibinfo{author}{A.~Ern}, \bibinfo{author}{J.-L. Guermond},
\newblock \bibinfo{title}{Finite element interpolation},
\newblock \bibinfo{journal}{Theory and Practice of Finite Elements}
  (\bibinfo{year}{2004}) \bibinfo{pages}{3--80}.
\bibitem[{Beir ao~da Veiga et~al.(2016)Beir ao~da Veiga, Chernov, Mascotto, and
  Russo}]{hpvem}
\bibinfo{author}{L.~Beir ao~da Veiga}, \bibinfo{author}{A.~Chernov},
  \bibinfo{author}{L.~Mascotto}, \bibinfo{author}{A.~Russo},
\newblock \bibinfo{title}{Basic principles of hp virtual elements on
  quasiuniform meshes},
\newblock \bibinfo{journal}{Mathematical Models and Methods in Applied
  Sciences} \bibinfo{volume}{26} (\bibinfo{year}{2016})
  \bibinfo{pages}{1567--1598}.
\bibitem[{Brezis and Mironescu(2018)}]{gagagain}
\bibinfo{author}{H.~Brezis}, \bibinfo{author}{P.~Mironescu},
\newblock \bibinfo{title}{Gagliardo--nirenberg inequalities and
  non-inequalities: the full story},
\newblock in: \bibinfo{booktitle}{Annales de l'Institut Henri Poincar{\'e} C,
  Analyse non lin{\'e}aire}, volume~\bibinfo{volume}{35},
  \bibinfo{organization}{Elsevier}, \bibinfo{year}{2018}, pp.
  \bibinfo{pages}{1355--1376}.
\bibitem[{Brenner et~al.(2016)Brenner, Groza, Guichard, Lebeau, and
  Masson}]{brenner2016gradient}
\bibinfo{author}{K.~Brenner}, \bibinfo{author}{M.~Groza},
  \bibinfo{author}{C.~Guichard}, \bibinfo{author}{G.~Lebeau},
  \bibinfo{author}{R.~Masson},
\newblock \bibinfo{title}{Gradient discretization of hybrid dimensional darcy
  flows in fractured porous media},
\newblock \bibinfo{journal}{Numerische Mathematik} \bibinfo{volume}{134}
  (\bibinfo{year}{2016}) \bibinfo{pages}{569--609}.
\bibitem[{Shewchuk(1996)}]{Triangle}
\bibinfo{author}{J.~R. Shewchuk},
\newblock \bibinfo{title}{Triangle: Engineering a 2d quality mesh generator and
  delaunay triangulator},
\newblock in: \bibinfo{booktitle}{Workshop on applied computational geometry},
  \bibinfo{organization}{Springer}, \bibinfo{year}{1996}, pp.
  \bibinfo{pages}{203--222}.
\bibitem[{Arioli et~al.(2005)Arioli, Loghin, and Wathen}]{stop}
\bibinfo{author}{M.~Arioli}, \bibinfo{author}{D.~Loghin},
  \bibinfo{author}{A.~J. Wathen},
\newblock \bibinfo{title}{Stopping criteria for iterations in finite element
  methods},
\newblock \bibinfo{journal}{Numerische Mathematik} \bibinfo{volume}{99}
  (\bibinfo{year}{2005}) \bibinfo{pages}{381--410}.
\bibitem[{Jir{\'a}nek et~al.(2010)Jir{\'a}nek, Strako{\v{s}}, and
  Vohral{\'\i}k}]{vohl}
\bibinfo{author}{P.~Jir{\'a}nek}, \bibinfo{author}{Z.~Strako{\v{s}}},
  \bibinfo{author}{M.~Vohral{\'\i}k},
\newblock \bibinfo{title}{A posteriori error estimates including algebraic
  error and stopping criteria for iterative solvers},
\newblock \bibinfo{journal}{SIAM Journal on Scientific Computing}
  \bibinfo{volume}{32} (\bibinfo{year}{2010}) \bibinfo{pages}{1567--1590}.
\bibitem[{Nicolaides(1987)}]{nic}
\bibinfo{author}{R.~A. Nicolaides},
\newblock \bibinfo{title}{Deflation of conjugate gradients with applications to
  boundary value problems},
\newblock \bibinfo{journal}{SIAM Journal on Numerical Analysis}
  \bibinfo{volume}{24} (\bibinfo{year}{1987}) \bibinfo{pages}{355--365}.

\end{thebibliography}
\bibliographystyle{model1-num-names}
\end{document}